\newtheorem{thmintro}{Theorem}
\newtheorem{tm}{Theorem}[section]
\newtheorem{pr}[tm]{Proposition}
\newtheorem{lm}[tm]{Lemma}
\newtheorem{co}[tm]{Corollary}
\theoremstyle{definition}
\newtheorem{df}[tm]{Definition}
\newtheorem{rmk}[tm]{Remark}
\newtheorem{ex}[tm]{Example}
\newcommand{\cat}[1]{{\mathbf{#1}}}
\newcommand{\lra}[1]{\langle #1 \rangle}
\newcommand{\Aone}{{\mathbb{A}^{\!1}}}
\newcommand{\KO}{\mathrm{KO}}
\newcommand{\K}{\mathrm{K}}
\newcommand{\GW}{\operatorname{GW}}
\newcommand{\SH}{\mathrm{SH}}
\newcommand{\LaoneAb}{{\mathrm{L}^{\ab}_{\Aone}}}
\newcommand{\nis}{{\textrm{Nis}}}
\newcommand{\Zar}{{\textrm{Zar}}}
\newcommand{\cell}{{\operatorname{cell}}}
\newcommand{\CcellSW}{C^{\operatorname{SW-cell}}}
\newcommand{\PcellSW}{P^{\operatorname{SW-cell}}}
\DeclareMathOperator*{\colim}{colim}
\newcommand{\bA}{\mathbb{A}}
\newcommand{\PP}{\mathbb{P}}
\newcommand{\D}{\mathbb{D}}
\newcommand{\Z}{\mathbb{Z}}
\newcommand{\N}{\mathbb{N}}
\newcommand{\Q}{\mathbb{Q}}
\newcommand{\R}{\mathbb{R}}
\newcommand{\F}{\mathbb{F}}
\newcommand{\C}{\mathbb{C}}
\newcommand{\G}{\mathbb{G}}
\newcommand{\ZZ}{\Z}
\newcommand{\RR}{\R}
\newcommand{\CC}{\C}
\newcommand{\QQ}{\Q}
\newcommand{\Hom}{\operatorname{Hom}} % Abelian group of homomorphisms
\newcommand{\Mor}{\operatorname{Mor}} % $V$--morphisms in a category enriched in $V$.
\newcommand{\End}{\operatorname{End}} % Ring of endomorphisms
\newcommand{\Dual}{\mathbb{D}}
\newcommand{\Ch}{\cat{Ch}}
\newcommand{\DpSWAone}{D^{\SW}(\cat{Ab}^p_{\Aone}(k))}
\newcommand{\Set}{\cat{Set}}
\newcommand{\Ab}{\cat{Ab}}
\newcommand{\Sheaves}{\cat{Sh}}
\newcommand{\Sm}{\cat{Sm}}
\newcommand\bbb[1]{\ensuremath{{\mathbf{#1}}}}
\newcommand{\A}{\operatorname{A}}
\newcommand{\Tr}{\operatorname{Tr}}
\newcommand{\tr}{\operatorname{tr}}
\newcommand{\ind}{\operatorname{ind}}
\newcommand{\Top}{\mathrm{top}}
\newcommand{\SW}{\operatorname{SW}}
\newcommand{\MW}{\operatorname{MW}}
\newcommand{\Th}{\operatorname{Th}}
\newcommand{\rk}{\operatorname{rank}}
\newcommand{\Spec}{\operatorname{Spec}}
\newcommand{\Res}{\operatorname{Res}}
\newcommand{\disc}{\operatorname{disc}}
\newcommand{\sgn}{\operatorname{sign}}
\newcommand{\dlog}{\operatorname{dlog}}
\newcommand{\cO}{\mathcal{O}}
\newcommand{\et}{{\text{\'et}}}
\newcommand{\ab}{{\rm ab}}
\newcommand{\calC}{{ \mathcal C}}
\newcommand{\calD}{{ \mathcal D}}
\newcommand{\calF}{{ \mathcal F}}
\newcommand{\calH}{{ \mathcal H}}
\newcommand{\calO}{{ \mathcal O}}
\newcommand{\calX}{{\mathcal X}}
\newcommand{\defi}[1]{\textsf{#1}} 
\newcommand{\id}{\mathrm{Id}}
\begin{document}

\pagestyle{plain}
\title{Quadratic enrichment of the logarithmic derivative of the zeta function}

\author{Margaret Bilu}
\email{margaret.bilu@math.u-bordeaux.fr}
\author{Wei Ho}
\email{who.math@gmail.com}
\author{Padmavathi Srinivasan}
\email{padmask@bu.edu}
\author{Isabel Vogt}
\email{ivogt.math@gmail.com}
\author{Kirsten Wickelgren}
\email{kirsten.wickelgren@duke.edu}

\makeatletter
\@namedef{subjclassname@2020}{%
  \textup{2020} Mathematics Subject Classification}
\makeatother

\subjclass[2020]{Primary 14G10, 14F42; Secondary 19D45, 55P25, 11G25.}

\date{April 29, 2024}

\begin{abstract}
We define an enrichment of the logarithmic derivative of the zeta function of a variety over a finite field to a power series with coefficients in the Grothendieck--Witt group. We show that this enrichment is related to the topology of the real points of a lift. For cellular schemes over a field, we prove a rationality result for this enriched logarithmic derivative of the zeta function as an analogue of part of the Weil conjectures. We also compute several examples, including toric varieties, and show that the enrichment is a motivic measure.
\end{abstract}
\maketitle

\setcounter{tocdepth}{2}
\tableofcontents

\section{Introduction}

Let $X$ be a smooth projective variety over a finite field $\F_q$. The zeta function of $X$ is defined by 
$$\zeta_X(t) = \exp\left(\sum_{m\geq 1} \frac{|X(\F_{q^m})|}{m}t^m\right).$$
By the celebrated Weil conjectures, the zeta function $\zeta_X(t)$ is a rational function that is also related to the topology of the complex points of a lift of $X$ to characteristic zero. Both of these features were explained by Grothendieck's \'{e}tale cohomological interpretation of the zeta function. It is then natural to wonder if $\zeta_X(t)$ also sees the topology of the points of lifts to other characteristic $0$ fields such as $\mathbb{R}$ or $\mathbb{Q}_p$ when appropriate lifts exist.

The aim of this paper is to define and study an enrichment of (the logarithmic derivative of) the zeta function $\zeta_X(t)$, by replacing the point counts with traces in the sense of $\Aone$-homotopy theory. This enrichment is a repackaging of the information contained in the point counts, additionally weighted by appropriate quadratic forms. We show that in many cases, this enriched zeta function satisfies a rationality result coming from a cohomology theory. Moreover, we show that after applying a suitable invariant, this zeta function relates the point counts over finite fields to the topology of the real points of a lift under certain hypotheses. Instead of \'etale cohomology, we use and extend a cohomology theory recently introduced by Morel and Sawant \cite{MorelSawant}, showing a trace formula for this theory.

Let $X$ be a smooth proper variety over a field $k$ (not necessarily finite). Due to work of Hu, Riou, and Ayoub  \cite{HuPicard, riou2005dualite, ayoub2007six}, $X$ is \defi{dualizable} in the $\Aone$-stable homotopy category $\SH(k)$. It follows that an endomorphism $\varphi: X \to X$ has a \defi{trace} $\Tr(\varphi)$ valued in the endomorphisms of the motivic sphere spectrum.  A theorem of Morel \cite{morel2004motivic-pi0} identifies this endomorphism ring with the \defi{Grothendieck--Witt group} $\GW(k)$ of~$k$, defined to be the group completion of isomorphism classes of symmetric nondegenerate bilinear forms on $k$ (see Section \ref{sec:SWintro}, and note that by Hoyois \cite[Footnote 1]{hoyois2015quadratic}, we need not assume $k$ is perfect). We may therefore define the following enrichment to $\GW(k)$ of the logarithmic derivative of the zeta function:

\begin{df}\label{df:dlogA1zeta}
Let $X$ be a smooth proper variety over a field $k$. Let $\varphi: X \to X$  be an endomorphism. The \defi{$\Aone$-logarithmic zeta function of $(X, \varphi)$} is defined by
\[ \dlog \zeta^{\Aone}_{X, \varphi} \colonequals \sum_{m \geq 1} \Tr(\varphi^m) t^{m-1} \in \GW(k)[[t]].\]
\end{df}

\begin{rmk}
Definition~\ref{df:dlogA1zeta} applies more generally to endomorphisms of dualizable objects of $\SH(k)$. One may take (symmetric monoidal) localizations of $\SH(k)$ or $\SH$ of more general base schemes by replacing $\GW(k)$ with the appropriate endomorphisms of the sphere spectrum. For example, the variety $X$ need not be proper. By \cite[Corollary B.2]{RiouAppendix}, a smooth scheme $X$ over a field $k$ is dualizable in the localized $\Aone$-stable homotopy category $\SH(k)_{\Z\left[\frac{1}{p}\right]}$ where $p$ is the characteristic exponent of $k$. For an endomorphism $\varphi: X \to X$ of a smooth scheme over a field $k$, we define
\[
 \dlog \zeta^{\Aone}_{X, \varphi} \colonequals \sum_{m \geq 1} \Tr(\varphi^m) t^{m-1} \in \GW(k)[\tfrac{1}{p}][[t]].
\] For $p$ odd, $\GW(k) \subseteq \GW(k)[\tfrac{1}{p}]$. Further results on duality, e.g., those of Dubouloz--D\'eglise--{\O}stvaer \cite{DDO-stable_homotopy_infty}, widen the class of $X$ for which $ \dlog \zeta^{\Aone}_{X, \varphi}$ is defined.
\end{rmk}

This $\Aone$-logarithmic zeta function recovers the classical zeta function via the rank map, as we now explain.  The connection comes from a realization functor on $\Aone$-homotopy theory: there is a symmetric monoidal stable \'etale realization $r_{\et, \ell}$ from $\SH(k)$ to the derived category of $\ell$-adic \'etale sheaves on the small \'etale site of $k$. It follows that $r_{\et, \ell}(\Tr \varphi) = \Tr(r_{\et, \ell} \varphi)$, which is the integer-valued trace of the usual Weil conjectures. The rank homomorphism, denoted $\rk: \GW(k) \to \Z$, sends the isomorphism class of a bilinear form $\beta: V \times V \to k$ to the dimension of the $k$-vector space $V$.
Via the identification of $\GW(k)$ with the endomorphisms of the sphere spectrum, the \'etale realization map $r_{\et, \ell}: \End(1_{k}) \to \End (r_{\et, \ell}(1_k))$ is identified with the $\rk$ homomorphism. It follows that
\begin{equation}\label{eq:rankAonezeta=zeta}
\rk \dlog \zeta^{\Aone}_{X, \varphi} = \frac{d}{dt} \log \zeta_{X, \varphi},
\end{equation} where $\zeta_{X, \varphi}$ denotes the classical zeta function
\begin{equation}\label{eq:zeta_classical_as_rational_function}
 \zeta_{X, \varphi}(t) = \prod_i (P_{\varphi\vert H^i_{\et}}(t))^{(-1)^{i+1}} \qquad \text{with} \quad P_{\varphi\vert H^i_{\et}}(t): = \det(1-t \varphi \vert H^i_{\et}(X_{k^s};\Z_{\ell})).
\end{equation}

We investigate the additional information recorded in the $\Aone$-logarithmic zeta function. A case of particular interest is when $k$ is finite and $\varphi:X\to X$ is the Frobenius endomorphism. Let $q$ be odd and let $k$ be the finite field $\F_q$ with $q$ elements. Then the Grothendieck--Witt ring $\GW(\F_q)$ is computed as
\begin{equation}\label{eq:GW_presentation}
\GW(\F_q) \cong \frac{\Z[\lra{u}]}{(\lra{u}^2 -1, 2(\lra{u}-1))}
\end{equation} where $u$ is a fixed non-square in $\F_q$ and $\lra{u}$ denotes the class of the bilinear form $k \times k \to k$ sending $(x,y)$ to $uxy$. As a group $\GW(\F_q)$ is isomorphic to $\Z\times \Z/2\Z$, by sending a class in $\GW(k)$ to the pair given by its rank and its discriminant. As above, the rank gives the classical zeta function. The discriminant term can be computed in terms of $\vert X(\F_{q^m}) \vert$ with Hoyois's beautiful enriched Grothendieck--Lefschetz trace formula \cite{hoyois2015quadratic} (see Section~\ref{S:Hoyois-trace-formula}):
\[
\disc \dlog \zeta^{\Aone}_{X, \varphi}  = \sum_{m \geq 1} \left( \sum_{\substack{i|m\\ i\ \text{even}}}\frac{1}{i}\sum_{d|i} \mu(d)|X(\F_{q^{i/d}})|\right) t^{m-1},
\] where $\mu$ denotes the M\"obius function. Combining rank and discriminant yields the expression
\begin{equation}\label{AoneZeta_from_points}
\dlog \zeta^{\Aone}_{X, \varphi}  = \sum_{m \geq 1}\left( \sum_{i \vert m} \alpha(i) \Tr_{\F_{q^i/\F_q}}\lra{1} \right)t^{m-1},
\end{equation} where $\alpha(i)$ denotes the number of points of $X$ with residue field $\F_{q^i}$ and 
$ \Tr_{\F_{q^i}/\F_q}\lra{1} $ is the transfer on $\GW$ (see \eqref{TrGWeq} and \eqref{E:classicaltrace} for the definition and computation of the transfer). Theorem~\ref{T:enrichedzetaviaHoyois} gives a further computation of $\dlog \zeta^{\Aone}_{X, \varphi} $ for the Frobenius $\varphi$ of $X$ over a finite field in terms of point counts. Such formulas are amenable to (computer) computation up to finitely many coefficients of $t^m$, e.g., for elliptic curves; see Section~\ref{sect:elliptic_curves}. 

Both the classical and $\Aone$-logarithmic zeta function are determined by the number of points of \(X\) over \(\F_{q^m}\) for finitely many \(m\). The beauty of the classical zeta function is that it packages these point counts to reveal topological information about the complex points of a lift.  In the same spirit, we show that the $\Aone$-logarithmic zeta function packages these point counts to additionally reveal topological information about the \textit{real} points of a lift, as we now discuss.

\subsubsection*{Connections with topology and real points}

The classical zeta function may be thought of as an algebraic manipulation of the numbers $\vert X(\F_{q^m}) \vert$ related to the topology of $\calX(\C)$. The $\Aone$-logarithmic zeta function~\eqref{AoneZeta_from_points} is similarly an algebraic manipulation of the $\vert X(\F_{q^m}) \vert$, but it is related both to the topology of $\calX(\C)$ and to the topology of $\calX(\R)$.

\begin{ex}
The scheme $\PP^1 \times \PP^1$ over $\R$ has a twist $\Res_{\C/\R} \PP^1$, given by the Weil restriction of scalars. The topology of the complex points of $\PP^1 \times \PP^1$ is unchanged by the twist, but the topology of the real points is changed from a product of two circles to a $2$-sphere
\[
(\PP^1 \times \PP^1)(\R) \simeq S^1 \times S^1 \quad \quad \Res_{\C/\R} \PP^1(\R) \simeq S^2.
\]
If $q$ is a prime congruent to $3$ modulo $4$, then the extension $\F_q \subset \F_{q^2}$ is given by $\F_{q^2} = \F_q[\sqrt{-1}]$ and the $\mathbb{R}$-schemes $\PP^1 \times \PP^1$ and $\Res_{\C/\R} \PP^1$ are lifts to characteristic zero of the varieties $\PP^1 \times \PP^1$ and $\Res_{\F_{q^2}/\F_q} \PP^1$ over $\F_q$, respectively. The change in the topology of the real points of the lifts to characteristic zero is reflected in the $\Aone$-zeta functions: 
\[
\dlog \zeta^{\Aone}_{\PP^1 \times \PP^1, \varphi}  = \frac{d}{dt} \log \frac{1}{(1- t)(1- q_\epsilon^2t)} + \langle -1 \rangle\frac{d}{dt} \log \frac{1}{(1- q_\epsilon t)^2}
\]
\[
\dlog \zeta^{\Aone}_{\Res_{\F_{q^2}/\F_q} \PP^1, \varphi}  = \frac{d}{dt} \log \frac{1}{(1- t)(1- q_\epsilon^2t)} + \langle -u \rangle \frac{d}{dt} \log \frac{1}{1-q_{\epsilon}\langle u \rangle t} + \langle -1 \rangle \frac{d}{dt} \log \frac{1}{1+q_{\epsilon}t},
\]

\noindent Here $u$ is a non-square in $\F_q^*$, and $q_{\epsilon}= \sum_{i=1}^q \langle (-1)^{i-1}\rangle$. The above formulas in fact hold for all $q$. Under the assumption that $q \equiv 3 \pmod{4}$, we may take to be $u=-1$ and the topology of the real and complex points determines these computations. See Examples~\ref{E:productofproj} and \ref{E:twistedproduct} for the computation, and Remark~\ref{R:changeinrealtop} for the connection with the topology of the real points.  
\end{ex}

More generally, we prove results on the relationship between $\dlog \zeta^{\Aone}_{X, \varphi}$ and $\calX(\R)$ in Section~\ref{Section:Rpoints}. There is a signature homomorphism $\sgn: \GW(\R) \rightarrow \Z$ that sends $a\lra{1} + b \lra{-1}$ to $a-b$. 
In the presence of a lift of Frobenius, Theorem~\ref{pr:lift_to_R} and Proposition~\ref{pr:sigdlogA1_R} say that in an appropriate sense, there is a well-defined signature of $\dlog \zeta^{\Aone}_{X,\F_p}$ that can be computed from the action on $\calX(\R)$.
\begin{thmintro}\label{thmintro:dlogZA1-lift-of-Frobenius-smooth-proper-Z} Let $X$ be an $\F_q$ scheme equipped with a lift of Frobenius to an endomorphism $\varphi:\calX \to \calX$ of a smooth and proper $\ZZ[\frac{1}{d}]$-scheme $\calX$ for $d=1$ or $d$ even. Then $\dlog \zeta^{\Aone}_{X,\F_p}$ lifts to $\GW(\ZZ[\frac{1}{d}])[[t]]$ and
\[
\sgn \dlog \zeta^{\Aone}_{X,\F_p} = \sum_i (-1)^{i+1} \frac{d}{dt}\log \det(1-t \varphi(\R)\vert H^i_{\Top}(\calX(\R);\Z)),
\] where $H^i_{\Top}(\calX(\R);\Z)$ denotes the singular cohomology of the real points of $\calX$.
\end{thmintro} For example, Theorem~\ref{thmintro:dlogZA1-lift-of-Frobenius-smooth-proper-Z} can be applied to any smooth, proper, toric variety; see Example~\ref{ex:toric_varieties}. This relationship with the topology of $\calX(\R)$ also produces examples where the $\Aone$-logarithmic zeta functions of varieties equipped with endomorphisms differ even when the classical ranks are equal. See Example~\ref{ex:same_classical_different_enriched_log_zeta}. The proof of Theorem~\ref{thmintro:dlogZA1-lift-of-Frobenius-smooth-proper-Z} for $d=1$ requires deep results on Hermitian K-theory \cite{CDHHLMNNS-3} \cite{BHeta_periodic}. 

In analogy with the classical case, the $\Aone$-logarithmic zeta function allows us to combine information about $\calX(\R)$ with $\vert X(\F_{q^m}) \vert$ for $m=1,2,3,\ldots$.  Combining Theorem~\ref{thmintro:dlogZA1-lift-of-Frobenius-smooth-proper-Z} with \eqref{AoneZeta_from_points} produces non-trivial lower bounds on the Betti numbers of $\calX(\R)$. See Remark~\ref{rmk:dlog_rationality_R_again} and Example \ref{ex:bounds:b_i}. The analogous relationship between $\calX(\C)$ and $\vert X(\F_{q^m}) \vert$ for $m=1,2,3,\ldots$ provided by the Weil conjectures has had stunning applications, e.g., \cite{Ellenberg-Venkatsh-Westerland}. We are limited by the restrictive hypotheses of Theorem~\ref{thmintro:dlogZA1-lift-of-Frobenius-smooth-proper-Z}. In response, we turn to further machinery.

\subsubsection*{Trace formula and rationality} Missing from the discussion so far is an expression for $\Tr(\varphi)$ in terms of a trace on cohomology, analogous to the Grothendieck--Lefschetz trace formula 
\[
\Tr(\varphi^m) = \sum_{i=0}^{2d} (-1)^i \Tr(\varphi^m\vert H^i_{\et}(X_{k^s};\Z_{\ell})).
\] Such formulas relate the classical zeta function of a variety over a finite field to the complex points of a lift  of the variety, without requiring the much more restrictive hypothesis of a lift of Frobenius.

For this, we use Morel and Sawant's $\Aone$-cellular homology \cite{MorelSawant}, currently defined for so-called \defi{cellular schemes}, which are a generalization of the classical notion of varieties with an affine stratification. There are subtleties in directly using Morel and Sawant's $\Aone$-cellular homology, and modifications are necessary for obtaining a theory with a good formalism for traces; we postpone a summary of these technical challenges to the next section where we give an overview of cohomological techniques. In \cite[Remark 2.44]{MorelSawant}, they note the existence of an extension of this theory to a pro-object for all spaces in the sense of $\Aone$-homotopy theory and in particular, for all smooth schemes over a field. Supported by conjectures on Poincar\'e duality for their theory, they conjecture that for smooth projective varieties over a field, this pro-object is in fact constant \cite{MorelTalk}. If their conjectures are true, our work may likewise extend to varieties which are not cellular. In this paper, we proceed for cellular schemes; see Section \ref{sec:cellularschemes} for the definitions.
 
We use the machinery of Morel and Sawant to give a $\GW(k)$-enriched Grothendieck--Lefschetz trace formula for the somewhat more restrictive class of \defi{simple} cellular schemes (see Definition \ref{df:finite_cellular_structure}; these include, e.g., schemes with an affine stratification):

\begin{thmintro}
Let \(\varphi \colon X \to X\) be an endomorphism of a smooth projective simple cellular scheme $X$ over a field $k$. In $\GW(k)$ we have the equality
\[
\Tr(\varphi) =  \sum_i \langle -1 \rangle^i \Tr(C_i^{\cell}(\varphi))
\] where $C_i^{\cell}(\varphi)$ denotes the $\Aone$-cellular complex of Morel and Sawant in degree $i$.
\end{thmintro}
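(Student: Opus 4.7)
The plan is to prove this as the natural motivic enrichment of the classical cellular Lefschetz trace formula. I would combine three ingredients: (i) additivity of the trace for dualizable objects under cofiber sequences in $\SH(k)$; (ii) the fact that the associated graded of a cellular filtration is a wedge of Tate spheres $T^{\wedge i} = (\PP^1)^{\wedge i}$; and (iii) Morel's computation of the swap map on $\G_m^{\wedge 2}$, which implies $\chi(T) = \langle -1 \rangle$ and hence $\chi(T^{\wedge i}) = \langle -1\rangle^i$.

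First, using the hypothesis that $X$ is simple cellular, I would extract a filtration by closed subschemes $\emptyset = X_{-1} \subset X_0 \subset \cdots \subset X_n = X$ whose strata $X_i \setminus X_{i-1}$ are disjoint unions of affine spaces $\mathbb{A}^i$, and I would use the simplicity of the structure to arrange that $\varphi$ (after a possible replacement by an $\Aone$-homotopic map) is compatible with this filtration. A purity/local triviality argument using the affineness of the strata then shows that each successive quotient $X_i / X_{i-1}$ is $\Aone$-weakly equivalent to a wedge $\bigvee_{\alpha \in I_i} T^{\wedge i}$ indexed by the set $I_i$ of $i$-dimensional cells.

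Second, iterated application of trace additivity to the cofiber sequences $X_{i-1} \to X_i \to X_i / X_{i-1}$ in $\SH(k)$ yields
\[
\Tr(\varphi) = \sum_i \Tr\bigl(\varphi_i \colon X_i / X_{i-1} \to X_i / X_{i-1}\bigr),
\]
where $\varphi_i$ is the induced endomorphism of the wedge of $T^{\wedge i}$'s, recorded by a matrix $M_i$ with entries in $\End_{\SH(k)}(T^{\wedge i}) \cong \GW(k)$ (using that $T^{\wedge i}$ is $\otimes$-invertible). Multiplicativity of the trace under smash products, together with $\chi(T^{\wedge i}) = \langle -1 \rangle^i$, gives
\[
\Tr(\varphi_i) = \langle -1 \rangle^i \cdot \tr(M_i),
\]
where $\tr(M_i)$ denotes the ordinary matrix trace in $\GW(k)$.

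Finally, I would identify $\tr(M_i)$ with $\Tr(C_i^{\cell}(\varphi))$. By construction, the $i$-th term of Morel and Sawant's $\Aone$-cellular complex records the $\Aone$-homology of the associated graded $X_i / X_{i-1}$, which (for a wedge of $T^{\wedge i}$'s) is free on $I_i$ and concentrated in a single degree; so $C_i^{\cell}(\varphi)$ is represented by $M_i$ and the two traces agree. The main obstacle is the first step: arranging, via the simplicity hypothesis and the authors' modifications to Morel--Sawant's construction, that $\varphi$ preserves the cellular filtration strictly enough for additivity to apply and that the induced maps on the associated graded pieces are functorially the ones computed by the $\Aone$-cellular complex. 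This compatibility between a geometric endomorphism and its homotopy-theoretic chain model is precisely the technical subtlety alluded to in the introduction, and it is what the \emph{simple} cellular hypothesis is designed to control.
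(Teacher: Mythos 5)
Your proposed route---filter $X$, pass to the associated graded, and use additivity of the trace along cofiber sequences---is close to how one would prove this in topology, but it stumbles at exactly the point you flag as the ``main obstacle,'' and the simplicity hypothesis does not rescue it. An arbitrary endomorphism $\varphi\colon X \to X$ does \emph{not} preserve the cellular filtration $\Omega_\bullet(X)$, even after replacing $\varphi$ by an $\Aone$-homotopic map, and nothing in the definition of a \emph{simple} cellular structure (Definition~\ref{df:finite_cellular_structure}) gives you any leverage on this: simplicity is a condition on $H_0^{\Aone}$ of the strata, not on morphisms. Without filtration preservation you cannot run May-style additivity along the cofiber sequences $\Omega_{i-1} \to \Omega_i \to \Omega_i/\Omega_{i-1}$, since that requires a map of the whole triangle. (Separately: you should be using the \emph{open} filtration $\Omega_i$, where purity gives $\Omega_i/\Omega_{i-1} \simeq \Th_{X_i}\nu_i \simeq \bigvee T^{\wedge i}$; the quotient of closed strata $\Sigma_i/\Sigma_{i-1}$ is not a Thom space and $\Sigma_i$ is typically singular.)

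The paper's argument is designed precisely to avoid this. Morel--Sawant's functoriality of $C_*^{\cell}$ is not obtained from a filtration-preserving representative of $\varphi$; it comes from the \emph{strict $\Aone$-resolution} property (Proposition~\ref{pr:pos_shifted_cell_A1-solvable} and Corollary~\ref{f:map_pos_cellular_to_map_cCell}): $\tilde C_*^{\cell}(X)$ represents the functor $C_* \mapsto \Hom_{D_{\Aone}}(\tilde C_*^{\Aone}(X), C_*)$, so any endomorphism of $X$ induces a canonical chain homotopy class $C_*^{\cell}(\varphi)$ by pure category theory, with no geometric compatibility assumption. The paper then bundles this into a symmetric monoidal functor $\CcellSW_*\colon \cat{\SW}^{\cell}(k) \to D^{\SW}(\Ab_{\Aone}^p(k))$, invokes Proposition~\ref{HrespectDtr} to transport the trace, uses Proposition~\ref{L:changeofcategory} to identify both endomorphism rings with $\GW(k)$, and finally computes the trace in the algebraic target by an explicit dualizability argument (Propositions~\ref{P:dualinDSWAbAonek} and \ref{pr:TraceDperf}), which is where the $\langle -1 \rangle^i$ signs come from (graded $(-\langle -1\rangle)$-commutativity of $\underline{K}^{\MW}_*$). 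Your computation $\chi(T^{\wedge i}) = \langle -1\rangle^i$ is correct and is morally the same sign, but it appears in the paper inside the algebraic trace formula rather than as a geometric Euler characteristic of a Tate sphere. The actual role of the simplicity hypothesis is to force $C_i^{\cell}(X) \cong (\underline{K}^{\MW}_i)^{b_i}$ (Remark~\ref{rmk:finite_cellular_schemes_Ccell_KMWbi}), so that $C_i^{\cell}(\varphi)$ is an honest square matrix over $\GW(k)$ and its matrix trace makes sense; it has nothing to do with making $\varphi$ cellular.
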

 See Theorem~\ref{tm:Cellular_LTF}. (This is a different result from the main theorem of \cite{hoyois2015quadratic} despite the similarity in the name.) This expression for $\Tr(\varphi)$ in terms of the trace of the cellular complex leads to rationality results for the $\Aone$-logarithmic zeta function, which we now describe.

\

We enrich the logarithmic derivative of the zeta function, rather than the zeta function itself, because the Grothendieck--Witt group in general has torsion elements. (See Remark \ref{rmk:dlogrationality} for why this does not lift to a notion of rationality using a $\lambda$-ring structure.)

\begin{df}\label{df:dlog_rational}
Let $R$ be a ring. We say a power series $\Phi(t) \in R[[t]]$ is \defi{dlog rational} if there exists a finite collection of polynomials $P_j \equiv 1 \pmod t$ in $R[t]$ and elements $c_j \in R$ such that
\[ \Phi(t) = \sum_j c_j \frac{P_j'(t)}{P_j(t)}, \]
where $\frac{1}{P_j(t)} \colonequals \sum_{m \geq 0} (1-P_j(t))^m$.
\end{df}

\begin{ex}
If a rational power series $\Psi$ with integer coefficients can be put in the form
$$\Psi(t) = \prod_{j} P_j(t)^{c_j}$$
where $P_j(t)$ are polynomials with integer coefficients such that $P_j \equiv 1 \mod t$, and $c_j\in \Z$, then $\frac{d}{dt}\log \psi$ is dlog rational. In particular, the Weil conjectures imply that $\frac{d}{dt}\log\zeta_X(t)$ is dlog rational for every smooth projective variety $X$ over a finite field.
\end{ex}

We prove that the $\Aone$-logarithmic zeta functions of simple cellular schemes are dlog rational with polynomial terms coming from the characteristic polynomials of matrices of elements of $\GW(k)$ giving the action of the endomorphism on the cellular complex (see Theorem~\ref{thm:rationality_cellular}):

\begin{thmintro}\label{tm:maintheorem}
Let $k$ be a field, and let $X$ be a smooth projective scheme over $k$ with a simple cellular structure. Let $\varphi:X\to X$ be an endomorphism of $X$. The function $\dlog \zeta_{X,\phi}^{\Aone}$ is dlog rational. More precisely,
\[\dlog \zeta^{\Aone}_{X, \varphi} = \sum_{i = - \infty}^\infty -\langle -1 \rangle^i \frac{d}{dt}\log P_{C^\cell_i(\varphi)}(t), \quad \text{where} \quad  P_{C^\cell_i(\varphi)}(t) = \det (1- t C^\cell_i(\varphi))\] and $C^\cell_i(\varphi)$ is a square matrix of elements of $\GW(k)$.
\end{thmintro}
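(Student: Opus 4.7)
The plan is to deduce this theorem as a direct consequence of the enriched Grothendieck--Lefschetz trace formula (Theorem~\ref{tm:Cellular_LTF} in the excerpt) combined with a standard matrix identity relating power sums of traces to characteristic polynomials. Applying the trace formula to each iterate $\varphi^m$ gives
\[
\Tr(\varphi^m) \;=\; \sum_i \langle -1 \rangle^i \Tr\bigl(C^{\cell}_i(\varphi^m)\bigr) \qquad \text{in } \GW(k).
\]
Functoriality of the cellular complex identifies $C^{\cell}_i(\varphi^m) = C^{\cell}_i(\varphi)^m$ as endomorphisms of the (finitely generated free) module in cellular degree $i$; because $X$ is simple cellular, this degree-wise complex is bounded, so only finitely many indices $i$ contribute. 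Summing over $m \geq 1$ and swapping the two (formal) sums yields
\[
\dlog \zeta^{\Aone}_{X,\varphi} \;=\; \sum_{m \geq 1}\Tr(\varphi^m)\, t^{m-1} \;=\; \sum_i \langle -1 \rangle^i \sum_{m \geq 1} \Tr\bigl(C^{\cell}_i(\varphi)^m\bigr)\, t^{m-1}.
\]

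The remaining step is to identify the inner sum. For any commutative ring $R$ and any square matrix $M$ over $R$, with $P_M(t) \colonequals \det(1 - tM) \in R[t]$, the identity
\[
\sum_{m \geq 1} \Tr(M^m)\, t^{m-1} \;=\; -\frac{d}{dt}\log P_M(t)
\]
holds in $R[[t]]$, where the right-hand side is interpreted as $-P_M'(t)/P_M(t)$ using the definition $1/P_M(t) = \sum_{m \geq 0}(1-P_M(t))^m$ available because $P_M \equiv 1 \pmod{t}$. This identity is universal in the entries of $M$, so it suffices to verify it in $\Q[a_{ij}]$, where it reduces to the classical computation $\log\det(1-tM) = -\sum_{m\geq 1} \tfrac{t^m}{m}\Tr(M^m)$ followed by termwise differentiation.

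Applying this identity with $R = \GW(k)$ and $M = C^{\cell}_i(\varphi)$ produces
\[
\sum_{m \geq 1}\Tr\bigl(C^{\cell}_i(\varphi)^m\bigr)\, t^{m-1} \;=\; -\frac{d}{dt}\log P_{C^{\cell}_i(\varphi)}(t),
\]
and substituting into the displayed expression for $\dlog \zeta^{\Aone}_{X,\varphi}$ yields the claimed formula. Since only finitely many $i$ contribute and each $P_{C^{\cell}_i(\varphi)}(t) \in \GW(k)[t]$ is congruent to $1$ modulo $t$, Definition~\ref{df:dlog_rational} immediately gives dlog rationality with coefficients $c_j = -\langle -1\rangle^i$. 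The only real point where care is needed is the bookkeeping for the universal identity over the noetherian, non-reduced ring $\GW(k)$; this is handled automatically by the base-change argument above since the identity is a polynomial one in the entries.
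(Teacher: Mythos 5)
Your proof is correct and takes essentially the same route as the paper. The paper's proof of the precise version (Theorem~\ref{thm:rationality_cellular}) first invokes Theorem~\ref{thm:rationality_cellular_slick} and then Proposition~\ref{pr:TraceDperf} to expand the trace term-by-term over cellular degrees; your approach applies the enriched trace formula (Theorem~\ref{tm:Cellular_LTF}) directly to each $\varphi^m$, but since Theorem~\ref{tm:Cellular_LTF} is itself built from $\CcellSW_*$-functoriality and Proposition~\ref{pr:TraceDperf}, the underlying steps — trace formula, functoriality $C^\cell_i(\varphi^m) = C^\cell_i(\varphi)^m$ (Corollary~\ref{f:map_pos_cellular_to_map_cCell}), and the universal-base-change proof of Lemma~\ref{lem:linear_algebra} — are identical.
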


Our methods also yield variations on such a result. For example, we weaken \defi{simple cellular structure} to \defi{cellular structure} in Theorem~\ref{thm:rationality_cellular_slick}: we lose matrices of elements of $\GW(k)$ and dlog rationality in the sense of Definition~\ref{df:dlog_rational} in exchange for an abstract logarithmic derivative of a characteristic polynomial on the $i$th term of Morel--Sawant's cellular complex. This abstract logarithmic derivative of a characteristic polynomial is introduced in Definition~\ref{df:log_derivative_char_poly_varphi} and is an abstraction of the elementary algebraic lemma that says that if $\varphi:V\to V$ is an endomorphism of a finite-dimensional vector space with characteristic polynomial $P_\varphi(t)$, then
$$\frac{d}{dt} \log P_\varphi(t) = -\sum_{m\geq 1}\Tr(\varphi^m)t^{m-1}.$$ We use the notational convention that $\frac{d}{dt} \log$ denotes taking a derivative of a logarithm, while $\dlog $ denotes a formal substitute.

Note that in this setup, the logarithmic $\Aone$-zeta function still retains terms associated to the cellular complex.

\begin{ex} In the logarithmic zeta functions for both
$\PP^1 \times \PP^1$ and $\Res_{\F_{q^2}/\F_q} \PP^1$ with the Frobenius $\varphi$,
we have terms $\frac{d}{dt}\log ((1-t)^{-1}(1-q_{\epsilon}^2t)^{-1})$, which come from the degree $0$ and $2$ terms in the cellular complex (which up to quasi-isomorphism is independent of the cellular structure! \cite[Corollary 2.42]{MorelSawant}). The difference comes from terms in degree $1$. For $\PP^1 \times \PP^1$ these are straightforward to compute (see Corollary~\ref{co:zeta_strict_cellular} for the computation of the $\Aone$-logarithmic zeta function of any \defi{strictly cellular}, smooth, projective variety, including products of projective spaces and Grassmannians). We obtain a quadratic term $\lra{-1}\frac{d}{dt}\log \frac{1}{(1-q_{\epsilon}t)^2}$ from the two obvious $1$-cells of $\PP^1 \times \PP^1$. The $\lra{-1}$ in front corresponds to the fact that the real dimension of these cells is odd. In $\dlog \zeta^{\Aone}_{\Res_{\F_{q^2}/\F_q} \PP^1, \varphi}$ by contrast, the contribution from the $1$-cells is 
\[
\lra{-u} \frac{d}{dt}\log \frac{1}{(1-q_{\epsilon}\lra{u}t)} + \lra{-1} \frac{d}{dt}\log \frac{1}{(1+q_{\epsilon}t),} 
\] which has vanishing signature (plug in $u=-1$ and $q_\epsilon =1$), caused by the disappearance of the $1$-cells over $\RR$.
\end{ex}

After this paper appeared, Xiaowen Hu showed that for any smooth, proper scheme over a finite field, $\dlog \zeta^{\Aone}_{X}$ is rational, but not always $\dlog$ rational \cite{Hu-rationality_dlogZ}.

\subsubsection*{Overview of cohomological techniques.} We briefly describe the main ideas that are needed to adapt Morel and Sawant's $\Aone$-cellular homology \cite{MorelSawant} to our purposes; developing this theory is the focus of Sections \ref{sec:cellularhomology} and \ref{sec:SWcellcomplex}. The goal is to construct a cohomology theory on appropriate geometric objects for which the traces of endomorphisms lie in $\GW(k)$ and are related to our enriched zeta function via an analogue of a Grothendieck--Lefschetz trace formula. 

We first use Morel--Sawant's $\Aone$-cellular homology to define a symmetric monoidal functor $\tilde{C}_*^\cell$ from an $\Aone$-homotopy category of cellular schemes  $\calH(k)^{\cell}_{\ast}$ to a derived category which we denote $D(\Ab^p_{\Aone}(k))$. The derived category of strictly $\Aone$-invariant sheaves $D(\Ab_{\Aone}(k))$ does not necessarily have enough projectives. We thus pass to a category $D(\Ab^p_{\Aone}(k))$, which is large enough to have a symmetric monoidal structure and receive a symmetric monoidal functor from $\calH(k)^{\cell}_{\ast}$ but small enough that the tensor product is the correct derived tensor product.

We then pass to Spanier--Whitehead categories (by inverting $\PP^1$ and its image) on both sides, which produces a symmetric monoidal functor
$$\CcellSW_\ast: \SW^{\cell}(k) \to D^{\SW}(\Ab^p_{\Aone}(k)).$$
 As smooth projective cellular $k$-schemes become dualizable in $\SW^{\cell}(k)$ (see Lemma \ref{Xsmproj_is_dualizable_SW}), the symmetric monoidal functor $\CcellSW_\ast$ then functorially attaches dualizable objects in $D^{\SW}(\Ab^p_{\Aone}(k))$ to smooth projective cellular $k$-schemes, thus giving the desired trace formalism. As our cohomology theory is given by an explicit chain complex, this allows us to, in many cases, compute traces algebraically, e.g., as traces of explicitly defined matrices.

\subsubsection*{Outline of paper}
Section \ref{sec:preliminaries} introduces some background for the remainder of the paper and constructs a symmetric monoidal derived category of certain strictly $\Aone$-invariant sheaves $D(\Ab^p_{\Aone}(k))$, which we will use for some of our computations of traces. Section \ref{Aone-Spanier-Whitehead-cellular} describes the categories of cellular schemes used in the paper. In Section \ref{sec:cellularhomology}, we use Morel--Sawant's \cite{MorelSawant} $\Aone$-cellular homology to define a symmetric monoidal functor $\tilde{C}_*^\cell$ from an $\Aone$-homotopy category of cellular schemes to $D(\Ab^p_{\Aone}(k))$. (See Proposition~\ref{Ccell_symmetric_monoidal}.) In Section \ref{sec:SWcellcomplex}, we pass to appropriate Spanier--Whitehead categories to obtain dualizable objects (Lemma~\ref{Xsmproj_is_dualizable_SW}, Proposition~\ref{HrespectDtr}). This defines a symmetric monoidal functor $\CcellSW_\ast$ from the $\Aone$-cellular-Spanier--Whitehead category to a $\underline{K}^{\MW}_1[1]$-Spanier--Whitehead category of $D(\Ab^p_{\Aone}(k))$ given in Definition~\ref{df:CcellSW}. We give an explicit computation of the dual of $\CcellSW_\ast(X_+)$ for $X$ a scheme with a simple (e.g., strict) cellular structure; see Propositions~\ref{P:dualinDSWAbAonek} and \ref{pr:TraceDperf}. In Section~\ref{subsection:LTFcellular}, we show the resulting Grothendieck--Lefschetz trace formula. We then prove in Section \ref{sec:rationality} the desired rationality results by computing the trace using $\CcellSW_\ast$. Aspects of the relationship with real points are discussed in Section~\ref{Section:Rpoints}. In Section \ref{sec:computingexamples}, we compute examples. Finally, in Section \ref{sec:motivic_measures}, we explore the link between our $\Aone$-logarithmic zeta function for the Frobenius with Kapranov's motivic zeta function, explaining that they appear to be different. We also show the $\Aone$-logarithmic zeta function defines a motivic measure on the (modified) Grothendieck ring of varieties $K_0(\mathrm{Var}_{\F_q})$. 

\subsubsection*{Acknowledgements}
We wish to thank Piotr Achinger, Aravind Asok, Tom Bachmann, Spencer Bloch, Fr\'ed\'eric D\'eglise, Grigory Garkusha, Bruno Kahn, Hannah Larson, Aram Lindroth, Fabien Morel, Markus Spitzweck, Vasudevan Srinivas, Thor Wittich, and Paul Arne {\O}stv{\ae}r for useful comments and discussions. We are particularly grateful to Tom Bachmann for correcting two related errors in an earlier draft.

We warmly thank Fabien Morel for discussions during the August 2022 Motivic Geometry conference in Oslo. He has independently considered using the Grothendieck--Witt enrichment from $\SH$ in the context of the Weil Conjectures. There are connections of our work to his ongoing work with Anand Sawant. In particular, their conjectures on the $\Aone$-cellular complex may help extend rationality results like Theorem~\ref{thm:rationality_cellular_slick} to arbitrary smooth projective varieties.

This project grew out of the Women in Numbers 5 virtual workshop. WH was partially supported by NSF CAREER DMS-1844763 and the Minerva Research Foundation.
PS was supported by the Simons Collaboration in Arithmetic Geometry, Number Theory, and Computation via the Simons Foundation grant 546235.
IV was partially supported by NSF MSPRF DMS-1902743 and by NSF DMS-2200655.
KW was partially supported by NSF CAREER DMS-2001890 and NSF-DMS 2103838. She also thanks the Isaac Newton Institute for hospitality during the program {\em K-theory, Algebraic Cycles and Motivic Homotopy}.

\section{Preliminaries} \label{sec:preliminaries}

\subsection{Categorical preliminaries}\label{S:categorical}

Let $(\calC, \otimes_{\calC}, 1_{\calC}, \tau_{\calC})$ be a symmetric monoidal category in the sense of, e.g., \cite[1.2]{MartyF09}.

\begin{df}\label{df:dualizable_object_symmetric_monoidal_cat}
(See, for example, \cite[Definition 2.1]{PontoShulman}.) An object $A$ of a symmetric monoidal category $(\calC, \otimes_{\calC}, 1_{\calC}, \tau_{\calC})$ is \defi{dualizable} if there is a dual object $\D A$ in $\calC$ in the sense that there exist coevaluation and evaluation maps
\[ 
\eta: 1_{\calC} \to A \otimes_{\calC} \D A \quad \quad \epsilon:  \D A \otimes_{\calC} A \to 1_{\calC},
\] respectively, such that the composites
$$ A \xrightarrow{\eta \otimes 1_A} A \otimes_{\calC} \D A \otimes_{\calC} A \xrightarrow{1_A \otimes \epsilon} A $$
$$ \D A \xrightarrow{ 1_{\D A} \otimes \eta} \D A \otimes_{\calC} A \otimes_{\calC} \D A \xrightarrow{\epsilon \otimes 1_{\D A}} \D A $$
are the identity maps $1_A$ and $1_{\D A}$, respectively. 
\end{df}

\begin{df}
(See, for example, \cite[Definition 2.2]{PontoShulman}.) For  an endomorphism $\varphi:A \to A$ of a dualizable object $A$ with dual $\D A$, the \defi{categorical trace} $\tr(\varphi)\in \End(1_{\calC})$ is the composition
\begin{equation*}
1_{\calC}\xrightarrow{\eta} A \otimes_{\calC} \D A \xrightarrow{\varphi\otimes_{\calC}\id_{\D A }} A\otimes_{\calC} \D A  \xrightarrow{\tau_{\calC}} \D A \otimes_{\calC} A \xrightarrow{\epsilon} 1_{\calC}. 
\end{equation*}
\end{df}

\begin{df}
A \defi{symmetric monoidal functor} is a functor $F: \calC \to \calD$ between symmetric monoidal categories, together with a natural isomorphism $F(1_{\calC}) \cong 1_{\calD}$ and for all objects $A, B \in \calC$, natural isomorphisms $\iota_{A,B}: F(A) \otimes_{\calD} F(B) \stackrel{\cong}{\to} F(A \otimes_{\calC} B)$ satisfying associativity, unitality, and symmetry;  the last is the condition that the diagrams
\[
\xymatrix{ F(A \otimes_{\calC} B) \ar[r]^{F(\tau_\calC)} & F(B \otimes_{\calC} A) \\
F(A) \otimes_{\calD} F(B) \ar[u]^{\iota_{A,B}}\ar[r]^{\tau_{\calD}} & F(B) \otimes_{\calD} F(A) \ar[u]^{\iota_{B,A}} }
\] commute (see \cite[Chapter XI]{MacLane}).
\end{df}

We record a well-known proposition for expositional clarity. Its proof is straightforward.

\begin{pr}\label{HrespectDtr}
Let $F: \calC \to \calD$ be a symmetric monoidal functor. Let $A$ be a  dualizable object of $\calC$ with dual $\D A$.  Then \begin{enumerate}
\item $F(A)$ is  dualizable with dual $F(\D A)$, and
\item for any endomorphism $\varphi: A \to A$, we have $\tr(F(\varphi)) = F(\tr(\varphi))$.
\end{enumerate}
\end{pr}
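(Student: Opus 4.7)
The plan is to construct the dual data for $F(A)$ explicitly from $F$ applied to the dual data for $A$, using the structure isomorphisms of the symmetric monoidal functor, and then verify the triangle identities and trace compatibility by diagram chase.

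First I would define candidate coevaluation and evaluation maps
\[
\eta' : 1_{\calD} \xrightarrow{\cong} F(1_{\calC}) \xrightarrow{F(\eta)} F(A \otimes_{\calC} \D A) \xrightarrow{\iota_{A,\D A}^{-1}} F(A) \otimes_{\calD} F(\D A),
\]
\[
\epsilon' : F(\D A) \otimes_{\calD} F(A) \xrightarrow{\iota_{\D A, A}} F(\D A \otimes_{\calC} A) \xrightarrow{F(\epsilon)} F(1_{\calC}) \xrightarrow{\cong} 1_{\calD}.
\]
To verify the first triangle identity, I would write out the composition $(1_{F(A)} \otimes \epsilon') \circ (\eta' \otimes 1_{F(A)})$ and insert the isomorphisms $\iota$ so that it becomes $F$ applied to the composition $(1_A \otimes \epsilon) \circ (\eta \otimes 1_A)$, which is $1_A$ by assumption. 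This rewriting uses only the naturality of $\iota$ and the associativity and unitality axioms that were assumed. The second triangle identity is symmetric. This gives part (1).

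For part (2), I would expand $\tr(F(\varphi))$ as
\[
1_{\calD} \xrightarrow{\eta'} F(A) \otimes_{\calD} F(\D A) \xrightarrow{F(\varphi)\otimes \id} F(A)\otimes_{\calD} F(\D A) \xrightarrow{\tau_{\calD}} F(\D A)\otimes_{\calD} F(A) \xrightarrow{\epsilon'} 1_{\calD}.
\]
Substituting the definitions of $\eta'$ and $\epsilon'$ and using naturality of $\iota$ to move the maps $F(\varphi)\otimes\id$ across $\iota$ yields $F(\varphi\otimes_{\calC}\id_{\D A})$, and the symmetry axiom for a symmetric monoidal functor (the commutative square involving $\tau_{\calC}$ and $\tau_{\calD}$) replaces $\tau_{\calD}$ with $F(\tau_{\calC})$ up to the coherences. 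Collecting terms, the whole composite becomes $F$ applied to the defining composite for $\tr(\varphi)$, together with the unit isomorphism $F(1_{\calC}) \cong 1_{\calD}$, which yields $F(\tr(\varphi))$.

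The main ``obstacle,'' such as it is, is purely bookkeeping: one must insert and cancel the coherence isomorphisms $\iota_{-,-}$ and $F(1_{\calC})\cong 1_{\calD}$ in the correct places so that every step is either naturality of $\iota$, functoriality of $F$, or the symmetry axiom for $F$. The symmetry axiom is the only place in part (2) where the \emph{symmetric} monoidal (as opposed to just monoidal) structure is essential; without it one would obtain only the trace in the sense of a monoidal category with a twist, rather than the symmetric trace. No further ideas are required.
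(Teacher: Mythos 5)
Your proof is correct and is precisely the standard argument the paper alludes to when it states the proposition is "well-known" and "its proof is straightforward" without giving details. Defining $\eta'$ and $\epsilon'$ by conjugating $F(\eta)$ and $F(\epsilon)$ with the coherence isomorphisms, verifying the triangle identities by naturality of $\iota$, and then using the symmetry square for $F$ to convert $\tau_{\calD}$ into $F(\tau_{\calC})$ in the trace composite is exactly the intended route.
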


We recall the notion of a Spanier--Whitehead category \cite[Section 4]{Voevodsky-ICMA1}.  

\begin{df}\label{df:SWcat}
Let  $(\calC, \otimes_{\calC}, 1_{\calC}, \tau_{\calC})$ be a symmetric monoidal category and $T$ be an object of $\calC$.
The \defi{Spanier--Whitehead} category $\calC[T^{\otimes -1}]$ is the category with objects $(C,n)$, where $C$ is an object of $\calC$ and $n \in \Z$, and morphisms given by 
\[
\Mor_{\calC[T^{\otimes -1}]}( (C,n), (C',n')) = \colim_{m \geq -n, -n'} \Mor_{\calC}(T^{\otimes m+n} \otimes C, T^{\otimes m+n'} \otimes C')
\] Composition of morphisms is given by the expected formula.
\end{df}

By \cite[Theorem 4.3]{Voevodsky-ICMA1}, the composition $(C,n) \otimes (C',n') := (C \otimes C', n+n')$ defines a symmetric monoidal structure on $\calC[T^{\otimes -1}]$  provided that the cyclic permutation of $T \otimes T \otimes T$ is the identity. Moreover, in this case, the functor $\calC \to \calC[T^{\otimes -1}]$ sending $C$ to $(C,0)$ is symmetric monoidal and the object $(T,0)$ has tensor inverse $T^{\otimes -1} = (1_{\calC}, -1)$.

\begin{pr}\label{pr:SWfunctor}
Let $(\calC, \otimes_{\calC}, 1_{\calC}, \tau_{\calC})$ and $(\calD, \otimes_{\calD}, 1_{\calD}, \tau_{\calD})$ be a symmetric monoidal categories and let $T$ be an object of $\calC$ such that the cyclic permutation of $T \otimes T \otimes T$ is the identity. Suppose that $F: \calC \to \calD$ is a symmetric monoidal functor. Then there is a unique (up to unique isomorphism) symmetric monoidal functor $F[T^{\otimes -1}]: \calC[T^{\otimes -1}] \to \calD[F(T)^{\otimes -1}]$ such that the diagram
\begin{equation}\label{eqn:CtoDtoCT-1toDT-1}
\xymatrix{\calC \ar[r]^{F} \ar[d]& \calD \ar[d] \\
\calC[T^{\otimes -1}]\ar[r]_-{F[T^{\otimes -1}]} &\calD[F(T)^{\otimes -1}]}
\end{equation} commutes.
\end{pr}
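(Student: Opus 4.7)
The plan is to construct $F[T^{\otimes -1}]$ explicitly and then extract uniqueness from a universal-property argument. On objects, set $F[T^{\otimes -1}](C,n) \colonequals (F(C), n)$. For morphisms, a representative of an element of $\Mor_{\calC[T^{\otimes -1}]}((C,n),(C',n'))$ is some $f \colon T^{\otimes m+n} \otimes_{\calC} C \to T^{\otimes m+n'} \otimes_{\calC} C'$. The iterated coherences $\iota$ give canonical isomorphisms $F(T^{\otimes k} \otimes_{\calC} D) \cong F(T)^{\otimes k} \otimes_{\calD} F(D)$; pre- and post-composing $F(f)$ with these produces a morphism $F(T)^{\otimes m+n} \otimes_{\calD} F(C) \to F(T)^{\otimes m+n'} \otimes_{\calD} F(C')$, whose image in the colimit defines $F[T^{\otimes -1}](f)$. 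Note that the hypothesis on $T$ transfers to $F(T)$: the iterated $\iota$ intertwines the cyclic permutation on $T^{\otimes 3}$ with that on $F(T)^{\otimes 3}$ (using the symmetry square for $F$), so $\calD[F(T)^{\otimes -1}]$ inherits the symmetric monoidal structure of \cite[Theorem~4.3]{Voevodsky-ICMA1}.

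Next I would verify well-definedness, functoriality, and the monoidal structure. The stabilization $f \mapsto 1_T \otimes_{\calC} f$ generating the colimit equivalence is carried by $F$ to $F(1_T \otimes_{\calC} f)$, which matches $1_{F(T)} \otimes_{\calD} F(f)$ after applying the associativity and unitality coherences for $\iota$; hence $F[T^{\otimes -1}](f)$ only depends on the class of $f$ in the colimit. Preservation of identities is immediate and preservation of composition follows from functoriality of $F$ and naturality of $\iota$. The monoidal coherence on $F[T^{\otimes -1}]$ is given by the image of $\iota_{C,C'}$ in the Spanier--Whitehead localization, namely an isomorphism $(F(C), n) \otimes (F(C'), n') \cong (F(C \otimes_{\calC} C'), n+n')$; associativity, unitality, and symmetry of these coherences descend verbatim from those of $F$. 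Commutativity of~\eqref{eqn:CtoDtoCT-1toDT-1} is then immediate, since both routes send $C \in \calC$ to $(F(C), 0)$.

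For uniqueness, suppose $G$ is another symmetric monoidal functor making \eqref{eqn:CtoDtoCT-1toDT-1} commute. Then $G(C,0) = (F(C), 0)$ on objects and morphisms of the form $(C, 0) \to (C', 0)$, by the commuting square. Since $(T,0)$ is invertible in $\calC[T^{\otimes -1}]$ with inverse $(1_{\calC}, -1)$, and since $G$ is monoidal, $G(T, 0) = (F(T), 0)$ forces $G(1_{\calC}, -1)$ to be the tensor inverse $(1_{\calD}, -1)$ of $(F(T), 0)$ up to unique monoidal isomorphism; hence $G(C, n) \cong (F(C), n)$ canonically. On morphisms, every element of $\Mor_{\calC[T^{\otimes -1}]}((C,n),(C',n'))$ is, by definition of the colimit, represented by a morphism in $\calC$ tensored with identities on powers of $T$, and the value of $G$ on such a representative is dictated by the commuting square together with the monoidal coherences, recovering $F[T^{\otimes -1}]$.

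The main obstacle is coherence bookkeeping: one must systematically verify that every instance of ``applying $F$ and then an iterated $\iota$'' behaves naturally with respect to the stabilization $f \mapsto 1_T \otimes f$ and with respect to composition in the colimit. This is pure diagram-chasing using associativity, unitality, and symmetry of the monoidal coherence $\iota$, and is the reason the proposition is folklore rather than substantive; the content is entirely packaging, with no new geometric input beyond Voevodsky's construction.
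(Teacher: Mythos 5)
Your construction is the same as the paper's: define $F[T^{\otimes -1}]$ on objects by $(C,n)\mapsto (F(C),n)$, transfer the cyclic-permutation hypothesis to $F(T)$ via the symmetry of $F$, and derive uniqueness from the isomorphism $(C,n)\cong C\otimes_{\calC[T^{\otimes -1}]} T^{\otimes n}$ together with monoidality. You spell out the colimit bookkeeping on morphisms in more detail than the paper (which leaves it implicit), but the argument is the same.
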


\begin{proof}
Since the cyclic permutation of $T \otimes T \otimes T$ is the identity and $F$ is a symmetric monoidal functor, the cyclic permutation of $F(T) \otimes F(T) \otimes F(T)$ is the identity. Thus $\calD[F(T)^{\otimes -1}]$ is a symmetric monoidal category receiving a canonical symmetric monoidal functor from $\calD$.  Define $F[T^{\otimes -1}] (C, n):= (F(C),n)$. Then $F[T^{\otimes -1}]$ is a symmetric monoidal functor such that \eqref{eqn:CtoDtoCT-1toDT-1} commutes. Uniqueness follows from the isomorphism $(C,n) \cong C \otimes_{\calC[T^{\otimes -1}]} T^{\otimes_n}$ in $\calC[T^{\otimes -1}]$. Here $C$ and $T$ also denote their images $(C,0)$ and $(T,0)$, respectively, under the canonical functor $\calC \to \calC[T^{\otimes -1}]$. 
\end{proof}

\subsection{On the motivic Spanier--Whitehead category and Milnor--Witt $K$-theory} \label{sec:SWintro}

Let $\Sm_k$ denote the category of smooth schemes over a field $k$. We will be working with the Morel--Voevodsky $\mathbb{A}^1$-homotopy category $\calH(k)$ over $k$, and its pointed version $\calH(k)_{\ast}$ \cite[p.~109]{morelvoev}. A feature of $\Aone$-homotopy theory is two different analogues of the circle: $S^1$ and $\bA^1-\{0\}$. We use the following indexing convention for the resulting spheres: let $S^{1,0} = S^1$, $S^{1,1} = \bA^1-\{0\}$, and $S^{p,q}= (S^{1,0})^{\wedge (p-q)} \wedge  (S^{1,1})^{\wedge q}$. Let $\PP^1$ denote projective space over $k$ of dimension $1$ pointed at $\infty$. There is a weak equivalence $\PP^1 \simeq S^{2,1}$ in $\calH(k)_{\ast}$ induced from the pushout and homotopy pushout $\PP^1 \simeq \bA^1 \cup_{\G_m} \bA^1$. Inductive and gluing arguments show that $\bbb{A}^n-\{0\} \simeq S^{2n-1,n}$ and $\bbb{A}^n/ \bbb{A}^n-\{0\} \simeq S^{2n,n}$ \cite[Example 2.20]{morelvoev}. By \cite[Lemma 3.43(2)]{A1-alg-top} (for example), the cyclic permutation on $\PP^1 \wedge \PP^1 \wedge \PP^1$ is the identity in $\calH(k)_{\ast}$.
Let $$\cat{\SW}(k) := \calH(k)_{\ast}[(\PP^1)^{\otimes -1}]$$ be the Spanier--Whitehead category arising from $\calH(k)_{\ast}$ with chosen object $\PP^1$ (recall Definition~\ref{df:SWcat}); note that it is a symmetric monoidal category.

Given a vector bundle $V$ on a smooth scheme $X$, the associated \defi{Thom space} \cite[p.~110-114]{morelvoev} is defined as $\Th_X V := V/(V-0)$, where $V-0$ denotes the complement of the zero section. The stable $\mathbb{A}^1$-homotopy category $\SH(k)$ receives a symmetric monoidal functor from $\cat{\SW}(k)$ which is fully faithful on smooth schemes and their Thom spaces \cite[Theorem 5.2, Corollary 5.3]{Voevodsky-ICMA1}.

Fundamental theorems of Morel \cite{morel2004motivic-pi0,A1-alg-top} compute certain stable and unstable homotopy groups of spheres in terms of Milnor--Witt K-theory. Let $K^{\MW}_*(k) = \oplus_{i=-\infty}^{\infty} K^{\MW}_i(k)$ denote the \defi{Milnor--Witt $K$-theory} of a field $k$, defined by Morel and Hopkins to be the associative algebra generated by a symbol $\eta$ of degree $-1$ and symbols $[u]$ for $u \in k^*$ of degree $+1$, subject to the relations 
\begin{align*}
 [u][1-u] = 0, && [uv]= [u]+[v]+\eta[u][v], && \eta [u] = [u] \eta, && \eta h =0
\end{align*}
for all $u$,$v$ in $k^*$, where $h=2+[-1]\eta $ denotes the hyperbolic element. Let $\underline{K}^{\MW}_*$ denote the associated unramified sheaf on $\Sm_k$. (See \cite[Chapter 3.2]{A1-alg-top} for the definition and more information on the associated unramified sheaf.) Then Morel  \cite[Corollary 6.43]{A1-alg-top} shows that
\[
\Mor_{\SH(k)}(S^n, S^{n+m,m})  \cong \underline{K}^{\MW}_m\text{for all }n,m\quad \quad \Mor_{\calH(k)_{\ast}}(S^n, S^{n+m,m}) \cong \underline{K}^{\MW}_m, n \geq 2, m\geq 0.
\]

The $0$th graded piece $K^{\MW}_0(k)$ is isomorphic to the \defi{Grothendieck--Witt group} $\GW(k)$ \cite[Lemma 3.10]{A1-alg-top}, defined to be the group completion of the semi-ring of nondegenerate symmetric bilinear forms. There is a presentation of $\GW(k)$ with generators
$$\begin{array}{cccc}\langle a \rangle:& k\times k& \to& k \\
&(x,y)& \mapsto & axy\end{array}$$ for every $a\in k^{*},$ and relations given for all $a,b\in k^{*}$ by:
\begin{enumerate} \item $\langle ab^2\rangle  = \langle a \rangle$;
\item $\langle a \rangle + \langle b \rangle = \langle a + b \rangle + \langle ab(a+b) \rangle$
\item $\langle a \rangle \langle b \rangle = \langle ab \rangle$.
\end{enumerate} 
The sheaf $\underline{K}^{\MW}_0$, also denoted $\underline{\GW}$, is the Nisnevich sheaf associated to the presheaf sending a smooth $k$-scheme $Y$ to the group completion of the semi-ring of isomorphism classes of locally free sheaves $V$ on $Y$ equipped with a non-degenerate symmetric bilinear form $V \times V \to \calO_Y$.

Let $\Ab(k)$ denote the abelian category of Nisnevich sheaves of abelian groups on $\Sm_k$. For future reference, we record the following lemma, essentially due to Morel \cite[3]{A1-alg-top}.

\begin{lm}\label{HomAbA1_KMWnmn+m}
There exists a natural isomorphism
$ \underline{K}^{\MW}_m \to \underline{\Hom}_{\Ab(k)} (\underline{K}^{\MW}_n,\underline{K}^{\MW}_{n+m})$  for all $n>0$ and all $m$.
\end{lm}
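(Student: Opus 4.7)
The natural map comes from multiplication in Milnor--Witt $K$-theory: the algebra structure gives a morphism of Nisnevich sheaves $\underline{K}^{\MW}_m \otimes_{\Z} \underline{K}^{\MW}_n \to \underline{K}^{\MW}_{n+m}$, and hom-tensor adjunction yields the desired
$$\phi\colon \underline{K}^{\MW}_m \longrightarrow \underline{\Hom}_{\Ab(k)}(\underline{K}^{\MW}_n, \underline{K}^{\MW}_{n+m}),$$
whose naturality is automatic.

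To prove $\phi$ is an isomorphism I would invoke Morel's theory of contractions \cite[Chapters 2--3]{A1-alg-top}. For a strictly $\Aone$-invariant sheaf $\calM$ the contraction is defined by $\calM_{-1}(Y) = \ker\bigl(\calM(\G_m \times Y) \to \calM(Y)\bigr)$, where the map is restriction along the unit section, and Morel proves both that this construction preserves strict $\Aone$-invariance and that $(\underline{K}^{\MW}_j)_{-1} \cong \underline{K}^{\MW}_{j-1}$ for all $j$. The plan is then to establish a natural identification
$$\underline{\Hom}_{\Ab(k)}(\underline{K}^{\MW}_n, \underline{K}^{\MW}_{n+m}) \;\cong\; (\underline{K}^{\MW}_{n+m})_{-n} \;\cong\; \underline{K}^{\MW}_m,$$
with the first isomorphism obtained by iterating the observation that any morphism $\underline{K}^{\MW}_n \to \calM$ into a strictly $\Aone$-invariant sheaf is determined by evaluation on the universal symbol $[t_1]\cdots[t_n] \in K^{\MW}_n(k(t_1,\dots,t_n))$, and that the constraints imposed by the Milnor--Witt relations cut out precisely the iterated contraction.

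The main obstacle will be verifying that this composite identification really matches the multiplication map $\phi$. By unwinding, this reduces to the computation that for $a \in \underline{K}^{\MW}_m(Y)$ the iterated residue at $t_n = 0, \dots, t_1 = 0$ of $a \cdot [t_1]\cdots[t_n]$ equals $a$, which follows directly from the Leibniz formula for residues together with the defining Milnor--Witt relations (in particular the twisted multiplicativity $[uv] = [u]+[v]+\eta[u][v]$ and $\eta h = 0$). The hypothesis $n>0$ enters because the contraction construction requires at least one factor of $\G_m$; for $n=0$ the statement is the tautology that the unit acts by the identity.
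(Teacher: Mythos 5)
Your plan takes essentially the same route as the paper: both identify the natural map with Milnor--Witt multiplication and prove it is an isomorphism using Morel's contraction $(-)_{-1}$ together with the computation $(\underline{K}^{\MW}_j)_{-1} \cong \underline{K}^{\MW}_{j-1}$. The paper arrives at the inverse a bit more directly, applying $(-)_{-1}^{\circ n}$ functorially to the internal Hom to land in $\underline{\Hom}_{\Ab(k)}(\underline{K}^{\MW}_0, \underline{K}^{\MW}_m)$ and then evaluating at $1 \in \underline{K}^{\MW}_0$, whereas you propose identifying the Hom sheaf with the iterated contraction $(\underline{K}^{\MW}_{n+m})_{-n}$ --- a step best justified by citing Morel's freeness result $\underline{K}^{\MW}_n \cong \widetilde{H}_0^{\Aone}(\G_m^{\wedge n})$ rather than rederiving it via the universal-symbol and residue sketch you offer.
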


\begin{proof}
\begin{comment}
The natural map is multiplication on $\underline{K}^{\MW}_{\ast}$, cf. \cite[Lemma 3.49]{A1-alg-top}. Let $(-)_{-1}$ denote the $-1$ construction of Voevodsky \cite[p 33]{A1-alg-top} on (pre)sheaves of groups on $\Sm_k$. Applying this construction $n$ times produces a map $(-)_{-n}:=(-)_{-1}^{\circ n}:\Hom_{\Ab(k)} (\underline{K}^{\MW}_n,\underline{K}^{\MW}_{n+m}) \to \Hom_{\Ab(k)} (\underline{K}^{\MW}_0,\underline{K}^{\MW}_{m})$. Note that the diagram
\begin{equation}\label{cd:HomAbA1_KMWnmn+m}
\xymatrix{\Hom_{\Ab(k)} (\underline{K}^{\MW}_n,\underline{K}^{\MW}_{n+m}) \ar[r] & \Hom_{\Ab(k)} (\underline{K}^{\MW}_0,\underline{K}^{\MW}_{m})\\
 \underline{K}^{\MW}_m \ar[u]  \ar[r]_{1} & \ar[u]\underline{K}^{\MW}_m }
\end{equation} commutes. Evaluation at $1$ in $\underline{K}^{\MW}_0$ defines a map  $\Hom_{\Ab(k)} (\underline{K}^{\MW}_0,\underline{K}^{\MW}_{m}) \to \underline{K}^{\MW}_{m}$, left inverse to the given natural map by the commutativity of \eqref{cd:HomAbA1_KMWnmn+m}. It follows from \cite[3.37]{A1-alg-top} that evaluation at $1$ is right inverse to the given natural multiplication map.
\end{comment}

Let $\Sheaves(\Sm_k, \Set_*)$ denote the $1$-category of Nisnevich sheaves of pointed sets on $\Sm_k$. By \cite[Theorem 3.36]{A1-alg-top}, the presheaf $(\G_m)^{\wedge n}: X \mapsto \cO^{\times}(X)^{\wedge n}$ is a sheaf of pointed sets.  By \cite[Theorem 3.37]{A1-alg-top} , $\underline{\Hom}_{\Ab(k)} (\underline{K}^{\MW}_n,\underline{K}^{\MW}_{n+m})\cong\underline{\Hom}_{\Sheaves(\Sm_k, \Set_*)}((\G_m)^{\wedge n}, \underline{K}^{\MW}_{n+m}) $. Let $(-)_{-1}$ denote the $-1$ construction of Voevodsky \cite[p 33]{A1-alg-top} on (pre)sheaves of groups on $\Sm_k$. For a sheaf of pointed sets $X$, let $\tilde{\ZZ}(X)$ denote the associated free sheaf of abelian groups (where the base point goes to $0$). It follows from the definition of $(-)_{-1}$, that for any $M$ in $\Ab(k)$ we have $M_{-1} \cong \underline{\Hom}_{\Ab(k)} (\tilde{\ZZ}(\G_m), M)$. Furthermore, applying $(-)_{-1}$ $n$-times, gives $M_{-n}\cong \underline{\Hom}_{\Ab(k)}  (\tilde{\ZZ}((\G_m)^{\wedge n}), M)$. Note that $\underline{\Hom}_{\Sheaves(\Sm_k, \Set_*)}((\G_m)^{\wedge n}, \underline{K}^{\MW}_{n+m}) \cong \underline{\Hom}_{\Ab(k)}  (\tilde{\ZZ}((\G_m)^{\wedge n}), M)$. Thus $\underline{\Hom}_{\Ab(k)} (\underline{K}^{\MW}_n,\underline{K}^{\MW}_{n+m}) \cong (\underline{K}^{\MW}_{n+m})_{-n} $. By \cite[Theorem 2.48]{A1-alg-top}, $(\underline{K}^{\MW}_{n+m})_{-n} \cong \underline{K}^{\MW}_{m}$.

\end{proof}

\subsection{$\Aone$-derived category and $\Aone$-homology}

Let $\Ch(\Ab(k))$ be the category of chain complexes $C_\ast$ of Nisnevich sheaves of abelian groups on $\Sm_k$ with differentials of degree $-1$. We denote by $D(\Ab(k))$ the associated derived category, obtained by inverting quasi-isomorphisms \cite[6.2]{A1-alg-top}. Let $\ZZ(\Aone)$ denote the free sheaf of abelian groups on the sheaf of sets represented by $\Aone$. Note that the map $\Aone \to \Spec k$ induces a map $\ZZ(\Aone) \to \ZZ$.

A chain complex $C_*$ in $\Ch(\Ab(k))$ is defined to be {\em $\Aone$-local} if for any $D_*$ in $\Ch(\Ab(k))$, the map
\[
\Hom_{D(\Ab(k))}(D_*, C_*) \to \Hom_{D(\Ab(k))}(D_* \otimes \ZZ(\Aone), C_*)
\] is a bijection \cite[Definition 6.17]{A1-alg-top}. The $\Aone$-derived category $D_{\Aone}(\Ab(k))$ \cite[Definition 6.17]{A1-alg-top} is obtained from $\Ch(\Ab(k))$ by inverting the $\Aone$-quasi-isomorphisms, defined to be morphisms $f: C_{\ast} \to D_{\ast}$ such that for all $\Aone$-local chain complexes $E_{\ast}$
\[
\Hom_{D(\Ab)}(D_{\ast}, E_{\ast}) \to \Hom_{D(\Ab)}(C_{\ast}, E_{\ast})
\] is bijective. There is an {\em (abelian) $\Aone$-localization functor} $\LaoneAb: \Ch(\Ab(k)) \to \Ch(\Ab(k))$ inducing a left-adjoint to the inclusion of $\Aone$-local complexes in $D(\Ab(k))$ \cite[6.18, 6.19]{A1-alg-top}; it induces an equivalence of categories between $D_{\Aone}(\Ab(k))$ and the full subcategory of $\Aone$-local complexes.

For a simplicial Nisnevich sheaf $\calX$ on $\Sm_k$, let $C_\ast(\calX)$ in $\Ch(\Ab(k))$ denote the normalized chain complex associated to the free simplicial abelian group $\ZZ\calX$ on $\calX$. The $\Aone$-chain complex $C_\ast^{\Aone}(\calX)$ (see \cite[Chapter 6.2]{A1-alg-top}) is obtained by taking the $\Aone$-localization of $C_\ast(\calX)$. This defines a functor from the $\Aone$-homotopy category $\calH(k)$ to the derived category $D_{\Aone}(\Ab(k))$:
\[
C_\ast^{\Aone}: \calH(k) \to D_{\Aone}(\Ab(k)).
\] If $\calX$ is a pointed space, the reduced chain complex $\widetilde{C}_*(\calX)$, which is the kernel of the morphism of $C_\ast(\calX)$ to $\Z$, similarly gives rise to a reduced $\Aone$-chain complex $\widetilde{C}_*^{\Aone}(\calX)$.

\begin{df}
For any simplicial Nisnevich sheaf $\calX$ on $\Sm_k$ and integer $n$, the \defi{$n$th $\Aone$-homology sheaf} $H_n^{\Aone}(\calX)$ of $\calX$ is the $n$th homology sheaf of the $\Aone$-chain complex $C_\ast^{\Aone}(\calX)$. For a pointed space $\calX$, the \defi{$n$th reduced $\Aone$-homology sheaf} of $\calX$ is $\widetilde{H}_n^{\Aone} (\calX) := H_n^{\Aone} (\widetilde{C}_*^{\Aone}(\calX))$.
\end{df}

Morel shows that $\Aone$-homology sheaves are strictly $\Aone$-invariant \cite[Corollary 6.23]{A1-alg-top} in the following sense.

\begin{df}
Let $\calF$ be a sheaf of abelian groups on $\Sm_k$ for the Nisnevich topology.
Then $\calF$ is \defi{$\Aone$-invariant} if for every $U \in \Sm_k$, the projection map $U \times \Aone \to U$ induces a bijection $\calF(U) \to \calF(U \times \A^1)$.
The sheaf $\calF$ is said to be \defi{strictly $\Aone$-invariant} if for every $U \in \Sm_k$ and every integer $i \geq 0$, the projection map $U \times \Aone \to U$ induces a bijection 
$$H^i_{Nis}(U,\calF) \to H^i_{Nis}(U \times \Aone,\calF).$$
The category of strictly $\Aone$-invariant sheaves on $\Sm_k$  is denoted $\Ab_{\Aone}(k)$.
\end{df}

For example, for a presheaf $\calX$ of sets on $\Sm_k$, the $0$th $\Aone$-homology $H_0^{\Aone}(\calX)$ is the free strictly $\Aone$-invariant sheaf on $\calX$. Morel computes 
\begin{equation}\label{HnAn/An-0}
\widetilde{H}_n^{\Aone} ( \bbb{A}^n/ \bbb{A}^n-\{0\}) \cong \underline{K}^{\MW}_n \text{ for }n \geq 1.
\end{equation}  This follows from the $\Aone$-weak equivalence $\bbb{A}^n/ \bbb{A}^n-\{0\} \simeq (S^1)^{\wedge n} \wedge (\G_m)^{\wedge n}$ \cite[p.~110, Spheres, Suspensions, Thom Spaces]{morelvoev}, the suspension isomorphism $\widetilde{H}_n^{\Aone} ((S^1)^{\wedge n} \wedge (\G_m)^{\wedge n}) \cong \widetilde{H}_{n-1}^{\Aone} ((S^1)^{\wedge n-1} \wedge (\G_m)^{\wedge n})$ \cite[Remark 6.30]{A1-alg-top} and  $\widetilde{H}_{0}^{\Aone}( (\G_m)^{\wedge n}) \cong  \underline{K}^{\MW}_n$\cite[Theorem 3.37 and Theorem 5.46]{A1-alg-top}.  For $n=0$, because $\bbb{A}^n/ \bbb{A}^n-\{0\} \cong \Spec k_+$ and $\widetilde{H}_n^{\Aone}(\Spec k_+) \cong \ZZ$, we have
\[
\widetilde{H}_n^{\Aone} ( \bbb{A}^n/ \bbb{A}^n-\{0\}) \cong \ZZ \text { for } n=0. 
\]
 
By \cite[Corollary 6.24]{A1-alg-top}, $\Ab_{\Aone}(k)$ is an abelian category and the inclusion $\Ab_{\Aone}(k) \subset \Ab(k)$ is exact. There is a tensor product, denoted $\otimes_{\Aone}$ or $ \otimes_{\Ab_{\Aone}(k)}$, on $\Ab_{\Aone}(k)$,  defined by $M \otimes_{\Aone} N := \pi_0 \LaoneAb( M \otimes_{\Ab(k)}N)$ for $N,M \in \Ab_{\Aone}(k)$. The map $M \otimes N \to M \otimes_{\Aone} N$ is the initial map to a strictly $\Aone$-invariant sheaf, as we now explain: by \cite[Theorem 6.22]{A1-alg-top},  $\LaoneAb( M \otimes_{\Ab(k)}N)$ is $-1$-connected, and by \cite[Corollary 6.23]{A1-alg-top}, $\pi_0 \LaoneAb( M \otimes_{\Ab(k)}N)$ is an element of $\Ab_{\Aone}(k)$. A strictly $\Aone$-invariant sheaf $F$, by \cite[Corollary 6.23]{A1-alg-top}, is $\Aone$-local. Then a map $M \otimes N \to F$ factors uniquely $M \otimes N  \to \LaoneAb (M \otimes N )\to F$. Since $F$ has no higher homotopy groups, we have that $M \otimes N \to F$ thus factors uniquely $M \otimes N  \to \LaoneAb (M \otimes N )\to \pi_0 \LaoneAb (M \otimes N) \to F$.

\begin{df}
(See \cite[Definition 2.9]{MorelSawant}.) A scheme $X$ in $\Sm_k$ is \defi{cohomologically trivial} if for every $n \geq 1$ and every strictly $\Aone$-invariant sheaf $M \in \Ab_{\Aone}(k)$, we have $H^n_{\nis}(X,M)=0$.  
\end{df}

If $X$ in $\Sm_k$ is cohomologically trivial, then $H_0^{\Aone}(X)$ is projective in $\Ab_{\Aone}(k)$ \cite[Example 2.36]{MorelSawant}. Moreover, if $X$ and $Y$ are cohomologically trivial in $\Sm_k$, so is $X \times Y$, and we have $H_0^{\Aone}(X \times Y) \cong H_0^{\Aone}(X) \otimes_{\Ab_{\Aone}(k)} H_0^{\Aone}(Y)$ \cite[Remark 2.10 and p.~12]{MorelSawant}.

Let $D(\Ab_{\Aone}(k))$ denote the bounded derived category of $\Ab_{\Aone}(k)$, obtained from the category $\Ch(\Ab_{\Aone}(k))$ by inverting quasi-isomorphisms. Let $D(\Ab^p_{\Aone}(k))\subseteq D(\Ab_{\Aone}(k))$ denote the full subcategory on bounded complexes $C_{\ast}$ of strictly $\Aone$-invariant sheaves $C_\ast$ that are isomorphic either to $H_0^{\Aone}(X)$  for some cohomologically trivial $X$ in $\Sm_k$ or to $\underline{K}^{\MW}_n \otimes_{\Ab_{\Aone}(k)} H_0^{\Aone}(X)$ for some cohomologically trivial $X$ and $n\geq 1$. In particular, the $C_\ast$ are projective \cite[Remark 2.26(1)]{MorelSawant}. 

We give $D(\Ab^p_{\Aone}(k))$ the structure of a symmetric monoidal category as follows. For chain complexes $P_{\ast}$ and $P'_{\ast}$ representing objects of $D(\Ab^p_{\Aone}(k))$, the set of homomorphisms $\Hom_{D(\Ab^p_{\Aone}(k))}(P_{\ast}, P'_{\ast})$ is given by homotopy classes of chain maps. Moreover, because $\underline{K}^{\MW}_n \otimes_{\Ab_{\Aone}(k)} \underline{K}^{\MW}_m \cong \underline{K}^{\MW}_{n+m}$ for $n,m \geq 1$ \cite[Theorem 3.37]{A1-alg-top} and $H_0^{\Aone}(X \times Y) \cong H_0^{\Aone}(X) \otimes_{\Ab_{\Aone}(k)} H_0^{\Aone}(Y)$ as above, the tensor product of bounded chain complexes in $\Ch(\Ab_{\Aone}(k))$ determines a well-defined derived tensor product

\begin{align}
\otimes_{D(\Ab^p_{\Aone}(k))}  : D(\Ab^p_{\Aone}(k)) \times D(\Ab^p_{\Aone}(k)) &\to D(\Ab^p_{\Aone}(k)) \label{eq:otimesDAbAone} \\
 P_{\ast} \otimes_{D(\Ab_{\Aone}(k))} P'_{\ast} &:= P_{\ast} \otimes_{\Ch(\Ab_{\Aone})(k)} P'_{\ast} \nonumber \\ 
\textrm{with } \qquad d(x \otimes y) &= d \otimes 1 + (-1)^{\vert x \vert} 1 \otimes d. \nonumber
\end{align}

The symmetry isomorphism is defined as
\begin{equation}\label{eq:tau_DAbAone}
\tau_{D(\Ab_{\Aone}(k))}: P_{\ast} \otimes_{D(\Ab^p_{\Aone}(k))} P'_{\ast}  \to P'_{\ast} \otimes_{D(\Ab^p_{\Aone}(k))} P_{\ast} 
\end{equation} so that its restriction to $P_i \otimes P'_j$ maps to $P'_j \otimes P_i$ by multiplication by $( -1 )^{ij}$ composed with the swap isomorphism for $\otimes_{\Ab_{\Aone}(k)}$. The swap map \eqref{eq:tau_DAbAone} and the derived tensor product of chain complexes \eqref{eq:otimesDAbAone} give $D(\Ab^p_{\Aone}(k))$ the structure of a symmetric monoidal category.

\section{$\Aone$-Spanier--Whitehead category of cellular smooth schemes}\label{Aone-Spanier-Whitehead-cellular}

In this section, we define cellular schemes, which are closely related to the classical notion of schemes with an affine stratification, and we introduce the cellular Spanier-Whitehead category that we will use for the rest of the paper.

\subsection{Cellular schemes} \label{sec:cellularschemes}

There are numerous useful definitions of a cellular scheme or cellular object of $\SH(k)$ available in the literature; see, e.g.,\cite[Example 1.9.1]{fulton-intersection}, \cite[Definition 1.16]{EisenbudHarris} \cite[Section 3]{Totaro-linear-varieties}, \cite[Def 3.4.1]{RozenblyumST}, \cite[Definitions 2.1, 2.7, and 2.10]{Dugger-Isaksen-Motivic_cell_structures}, \cite[Definitions 2.7 and 2.11]{MorelSawant}, and \cite[Question 1]{Asok_Fasel_Hopkins_alg_vector_bundles}. We will use \cite[Definition 2.11]{MorelSawant} and a slight modification adapted to our purposes.

The basic goal is to define a class of varieties that are built out of simple varieties (from a cohomological perspective) in such a way that their cohomology can be understood inductively.  A first natural class of such varieties are those built out of affine spaces.  

\begin{df}\label{df:strict_cellular_structure}
An \(n\)-dimensional smooth scheme \(X\) has a \defi{strict cellular structure} if one of the following equivalent conditions holds:
\begin{enumerate}
\item(cf.~\cite[Example 1.9.1]{fulton-intersection})\label{cond:F} \(X\) admits a filtration by closed subschemes
\[\emptyset  = \Sigma_{-1}(X) \subset \Sigma_0(X) \subset \Sigma_1(X) \subset \cdots \subset \Sigma_{n-1}(X) \subset \Sigma_n(X) = X,\]
such that \(\Sigma_i(X) \smallsetminus \Sigma_{i-1}(X)\) is a (finite) disjoint union of schemes isomorphic to affine space \(\bA^{i}\).
\item(cf.~\cite[Definition 2.7]{MorelSawant}) \label{cond:MW} \(X\) admits a filtration by open subschemes
\[\emptyset = \Omega_{-1}(X) \subset \Omega_0(X) \subset \Omega_1(X) \subset \cdots \subset \Omega_{n-1}(X) \subset \Omega_n(X) = X,\]
such that \(\Omega_i(X) \smallsetminus \Omega_{i-1}(X)\) is a (finite) disjoint union of schemes isomorphic to affine space \(\bA^{n-i}\).
\end{enumerate}
To pass between these conditions, let \(\Omega_i(X) \colonequals X \smallsetminus \Sigma_{n-i-1}(X) \) and \(\Sigma_i(X) \colonequals X \smallsetminus \Omega_{n-i-1}(X) \).
A scheme with a strict cellular structure is called a \defi{strict cellular scheme}.
\end{df} 

A related notion is an \defi{affine stratification} \cite[Definition 1.16]{EisenbudHarris}, where
\(X\) is a disjoint union of finitely many locally closed subschemes \(U_i\) (``\defi{open strata}''), each isomorphic to \(\bA^{n_i}\), whose closures \(\overline{U}_i\) are a disjoint union of strata \(U_j\).
If \(X\) has an affine stratification, then defining \(\Sigma_j(X)\) to be the union of all open strata \(U_i\) of dimension at most \(j\) yields a strict cellular structure.

\begin{ex}
Projective space \(\PP^n\) has a strict cellular structure given by a full flag \(\PP^0 \subset \PP^1 \subset \cdots \subset \PP^n\) with \(\Sigma_i(X) \colonequals \PP^{i}\) and \(\Omega_i(X) \colonequals \PP^n \smallsetminus \PP^{n-i-1}\).  Because the join $\PP^i \ast \PP^j $ is weakly equivalent to $ \PP^{i+j+1}$, i.e.  \(\PP^i \ast \PP^j \simeq_{\bA^1} \PP^{i+j+1}\), and for topological spaces $Y,Z$, the open subset $Y \ast Z - Y$ deformation retracts onto $Z$, it is useful to think of $\Omega_i(X)$ as an open neighborhood of the $i$-skeleton. See \cite[Remark 2.12]{MorelSawant} for more discussion of the topological notion of skeleta, CW-structure, and cellular structure.

\end{ex}

\begin{ex}\label{B/G_example}
The class of schemes with a strict cellular structure contains many geometrically interesting examples.
Grassmannians \(\mathbb{G}(r, n)\) have an affine stratification by the Schubert cells \cite[Section 4.1.2]{EisenbudHarris}.  More generally, the Bruhat decomposition on the flag vareity $G/B$ of a split semisimple algebraic group $G$ with a Borel subgroup $B$ defines a strict cellular structure \cite[p.~12]{MorelSawant}. 
\end{ex}

The following more general definition of Morel--Sawant allows the ``cells'' to be more general affine schemes. 

\begin{df}\label{df:cellular_structure}
(\cite[Definition 2.11]{MorelSawant}) Let $X$ be a smooth scheme over a field $k$. A \defi{cellular structure} on $X$ consists of an increasing filtration 
\[
\emptyset = \Omega_{-1}(X) \subsetneq \Omega_0(X) \subsetneq \Omega_{1}(X) \subsetneq \cdots \subsetneq \Omega_s(X) =X 
\] by open subschemes such that for $i \in \{0,\ldots,s \}$, the reduced induced closed subscheme $X_i := \Omega_i(X) \smallsetminus \Omega_{i-1}(X)$ of \(\Omega_i(X)\) is $k$-smooth, affine, everywhere of codimension $i$ and cohomologically trivial. A scheme with a cellular structure is called a \defi{cellular scheme}.
\end{df}

\begin{df}\label{df:finite_cellular_structure}
A \defi{cellular structure} on $X$ is called \defi{simple} if for every $n$, $H^{\Aone}_0(X_n) \simeq \ZZ^{b_n}$ for some non-negative integer~$b_n$. A scheme with a simple cellular structure is called a \defi{simple cellular scheme}.
\end{df}

The class of simple cellular schemes contains all schemes with a strict cellular structure (which in turn contains all schemes with an affine stratification), because \(H^{\Aone}_0(\bA^i) \simeq \ZZ\) and \(H^n_{\nis}(\bA^i,M)=0\) for all \(n > 0\) and any strictly $\Aone$-invariant sheaf $M \in \Ab_{\Aone}(k)$.

\subsection{Cellular Spanier--Whitehead category}\label{subsection:Cellular_SW_cat}

Let $\calH(k)^{\cell}_{\ast}$ denote the full subcategory of $\calH(k)_{\ast}$ with objects consisting of pointed spaces of the form $\Th_X V$, where $X$ is a smooth $k$-scheme admitting a cellular structure and $V$ is a vector bundle on $X$. Note that $X_{+} : = X \coprod \Spec k$ is an object of $\calH(k)^{\cell}_{\ast}$ as $X_+ \cong \Th_{X} 0$. The product of two smooth schemes admitting cellular structures admits a canonical cellular structure (see \cite[p.21]{MorelSawant} or Lemma~\ref{C:Kunnethpos}). It follows that
the symmetric monoidal structure $\wedge$ on $\calH(k)_{\ast}$ restricts to a symmetric monoidal structure on $\calH(k)^{\cell}_{\ast}$.

We now introduce the cellular Spanier--Whitehead category 
$$\cat{\SW}^{\cell}(k):= \calH(k)^{\cell}_{\ast}[(\PP^1)^{\otimes -1}].$$
Recall that we defined $\PP^1$ to be the projective line pointed at $\infty$. The pointed space $\PP^1$ is in $\calH(k)^{\cell}_{\ast}$ because $\PP^1 = \Th_{\Spec k} \calO$, so $\cat{\SW}^{\cell}(k)$ is well-defined. Note that the canonical functor $\cat{\SW}^{\cell}(k) \to \cat{\SW}(k)$ is fully faithful. As noted in Section \ref{S:categorical}, since the cyclic permutation on $\PP^1$ is the identity, the natural functor $\calH(k)^{\cell}_{\ast} \to \cat{\SW}^{\cell}(k)$ is a symmetric monoidal functor.

There is a canonical $\Aone$-weak equivalence
\[
\Th_X V \stackrel{\sim}{\leftarrow} \PP(V \oplus \calO)/\PP(V),
\] between the Thom space $\Th_X V := V/(V - 0)$ and the quotient $ \PP(V \oplus \calO)/\PP(V)$, where $\calO$ denotes the trivial bundle. It follows that there is a canonical isomorphism $\Th_X (V \oplus \calO) \cong \PP^1 \wedge \Th_X V$ in $\calH(k)_*$ \cite[Prop 2.17]{morelvoev}. This can be used to extend the Thom space construction to a functor
\[
\Th: K_0(X) \to {\cat{\SW}(k)} \quad \quad \Th V := (\Th_X (V \oplus \calO^{r}), -r)
\] 
where $r$ denotes a positive integer such that $V \oplus \calO^r$ is represented by a vector bundle. Indeed, a path in the K-theory groupoid $K(X)$ between vector bundles $V$ and $W$ produces a canonical isomorphism $\Th_X V \simeq \Th_X W$ in $\SH(k)$ \cite[Scholie~1.4.2(2)]{ayoub2007six}. This isomorphism lies in $\SW(k)$ because the functor $\SW(k) \to \SH(k)$ is fully faithful on Thom spaces \cite[Corollary 5.3]{Voevodsky-ICMA1}. For $V$ a vector bundle (of non-negative rank), there is a canonical isomorphism between the Thom space functor $\Th$ just defined applied to $V$ and the usual $\Th_X V$, justifying the abuse of notation. When $X$ admits a simple cellular structure, the Thom space functor  $\Th$ factors through $\cat{\SW}^{\cell}(k) \to \cat{\SW}(k)$, defining $\Th: K_0(X) \to {\cat{\SW}^{\cell}(k)}$. 

\begin{lm}\label{Xsmproj_is_dualizable_SW}
Let $X$ be a smooth projective scheme over $k$ and suppose that $X$ admits a cellular structure. Then $X_+$ is dualizable in $\cat{\SW}^{\cell}(k)$.
\end{lm}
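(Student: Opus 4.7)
The plan is to invoke Atiyah duality (due to Hu, Riou, and Ayoub) in $\SH(k)$ — or rather in $\cat{\SW}(k)$, where it holds because the functor $\cat{\SW}(k) \to \SH(k)$ is fully faithful on Thom spaces of smooth schemes — and then descend the duality data along the fully faithful functor $\cat{\SW}^{\cell}(k) \to \cat{\SW}(k)$. Concretely, for $X$ smooth projective, Atiyah duality identifies the dual of $X_+$ with the Thom spectrum $\Th_X(-T_X)$ of the virtual bundle $-T_X \in K_0(X)$; my goal is to realize this identification inside $\cat{\SW}^{\cell}(k)$.

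First I would observe that the candidate dual $\Th_X(-T_X)$ lies in $\cat{\SW}^{\cell}(k)$. Indeed, by the discussion preceding the lemma, the Thom space construction extends to a functor $\Th\colon K_0(X) \to \cat{\SW}^{\cell}(k)$ whenever $X$ admits a cellular structure: for a suitable $r \geq 0$, the bundle $-T_X \oplus \calO^{r}$ is represented by an honest vector bundle $V$ on $X$, and $(\Th_X V, -r)$ is an object of $\cat{\SW}^{\cell}(k)$ since $X$ is cellular. Next I would check that the evaluation and coevaluation maps from Atiyah duality
\[
\epsilon \colon \Th_X(-T_X) \wedge X_+ \to 1, \qquad \eta \colon 1 \to X_+ \wedge \Th_X(-T_X),
\]
which a priori are morphisms in $\cat{\SW}(k)$, have source and target in the subcategory $\cat{\SW}^{\cell}(k)$. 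The unit $1 = (\Spec k)_+$ is trivially in $\cat{\SW}^{\cell}(k)$. For the smash products, I would use the compatibility $\Th_X V \wedge \Th_Y W \cong \Th_{X\times Y}(p_1^\ast V \oplus p_2^\ast W)$ together with the fact that $X \times X$ admits a canonical cellular structure given by the product of the cellular structures on its factors (cf.\ Lemma~\ref{C:Kunnethpos} and the remarks at the beginning of Section~\ref{subsection:Cellular_SW_cat}); this exhibits $X_+ \wedge \Th_X(-T_X)$ and $\Th_X(-T_X) \wedge X_+$ as shifted Thom spaces over the cellular scheme $X \times X$.

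Once the objects and morphisms are all seen to live in $\cat{\SW}^{\cell}(k)$, the triangle identities of Definition~\ref{df:dualizable_object_symmetric_monoidal_cat} — which are equalities of morphisms between objects now confirmed to lie in $\cat{\SW}^{\cell}(k)$ — transfer from $\cat{\SW}(k)$ to $\cat{\SW}^{\cell}(k)$ thanks to full faithfulness of the inclusion. This would complete the proof that $X_+$ is dualizable in $\cat{\SW}^{\cell}(k)$, with dual $\Th_X(-T_X)$.

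The main input is Atiyah duality itself, which is taken as a black box from \cite{HuPicard, riou2005dualite, ayoub2007six}; the only genuine verification is the closure of $\cat{\SW}^{\cell}(k)$ under the operations appearing in the duality diagram, and this reduces to the stability of cellularity under products and to the extension of the Thom space construction to $K_0(X)$. The potential subtlety — hence the principal obstacle — is ensuring that no smash product appearing implicitly in the triangle identities (e.g.\ $X_+ \wedge X_+ \wedge \D(X_+)$ in the composite $X_+ \to X_+ \wedge \D(X_+) \wedge X_+ \to X_+$) leaves the subcategory; this is again handled by the fact that iterated products of cellular schemes are cellular, so no such object escapes $\cat{\SW}^{\cell}(k)$.
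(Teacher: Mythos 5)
Your proposal is correct and follows essentially the same route as the paper: invoke Atiyah duality (Riou) to identify $\D X_+ \simeq \Th(-T_X)$ in $\SH(k)$, observe that this dual and the unit live in $\cat{\SW}^{\cell}(k)$, and use full faithfulness of $\cat{\SW}^{\cell}(k) \to \cat{\SW}(k) \to \SH(k)$ on the relevant Thom spaces to pull back the (co)evaluation maps and the triangle identities. You are slightly more explicit than the paper about verifying that the iterated smash products such as $X_+ \wedge \D(X_+) \wedge X_+$ remain in the cellular subcategory (via stability of cellularity under products), a point the paper compresses into an ``etc.''
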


\begin{proof}
Since $X$ is smooth and projective over $k$, we have that $X_+$ is dualizable in $\SH(k)$ with dual $\D X_+ \simeq \Th(-TX)$, where $TX$ denotes the tangent bundle of $X$ \cite[Th\'eor\`eme 2.2]{riou2005dualite}. Let $\D X_+$ in $\cat{\SW}^{\cell}(k)$ denote $\Th(-TX)$ by a slight abuse of notation. The evaluation and coevaluation maps $\epsilon$ and $\eta$ in $\SH(k)$ of Definition \ref{df:dualizable_object_symmetric_monoidal_cat} have unique preimages under $\cat{\SW}^{\cell}(k) \to \cat{\SW}(k) \to \SH(k)$ because both functors are fully faithful on  $1_{\cat{\SW}^{\cell}(k)}$, $X_+$, $\D X$, $X_+ \wedge \D X$, etc. 
\end{proof}

\section{The cellular homology of Morel--Sawant on cellular Thom spaces} \label{sec:cellularhomology}

Morel and Sawant \cite[\S 2.3]{MorelSawant} define the \defi{cellular $\Aone$-chain complex} and corresponding \defi{cellular $\Aone$-homology} for a cellular scheme, and show these are functors between appropriate categories \cite[Corollary 2.43]{MorelSawant} preserving monoidal structures \cite[Lemma 2.31]{MorelSawant}. We generalize these definitions in a straightforward manner to include Thom spaces, which will result in a symmetric monoidal functor
\[
\tilde{C}_{\ast}^{\cell}:  \calH(k)^{\cell}_{\ast} \to D(\Ab^p_{\Aone}(k))
\]
(see Proposition~\ref{Ccell_symmetric_monoidal}).

Let $V$ be a vector bundle of rank $r \geq 0$ on $X$ in $\Sm_k$ and
\[
 \emptyset = \Omega_{-1} \subsetneq \Omega_0 \subsetneq \Omega_{1} \subsetneq \cdots \subsetneq \Omega_s =X 
\] be a cellular structure on $X$, so $\Th_X V $ is an object of $ \cat{\SW}^{\cell}(k)$. Let $X_i = \Omega_i \setminus \Omega_{i-1} = \coprod_{m \in M_i} X_{im}$ be the decomposition of $X_i$ into connected components. Morel--Voevodsky Purity \cite[Theorem 2.23]{morelvoev} provides a canonical weak equivalence 
\[
\Th_{\Omega_i}V/\Th_{\Omega_{i-1}}V \simeq_{\Aone} \Th_{X_i}(V+\nu_i)
\] where $\nu_i$ is the normal bundle of the closed immersion $X_i \hookrightarrow \Omega_i$ of smooth $k$-schemes. 

Consider the cofibration sequence
\[
\Th_{\Omega_{i-1}} V \to \Th_{\Omega_i} V \to \Th_{\Omega_i} V /\Th_{\Omega_{i-1}} V,
\]
and its long exact sequence of reduced $\Aone$-homology sheaves
\[
\xymatrixcolsep{1pc}
\xymatrixrowsep{1pc}
\xymatrix{
\cdots \ar[r]  & \widetilde{H}^{\Aone}_{n} (\Th_{\Omega_{i-1}} V) \ar[r] & \widetilde{H}^{\Aone}_{n} (\Th_{\Omega_{i}} V) \ar[r]&  \widetilde{H}^{\Aone}_{n} (\Th_{\Omega_{i}} V / \Th_{\Omega_{i-1}} V) \ar[r] \ar[d]^{\simeq} & \widetilde{H}^{\Aone}_{n-1} (\Th_{\Omega_{i-1}} V) \ar[r] & \cdots  \\
& & &  \widetilde{H}^{\Aone}_{n} (\Th_{X_i} (V+\nu_i))
}
\]
which gives the boundary map $\partial_n$ as the composite
\[
 \widetilde{H}^{\Aone}_{n+r}(\Th_{X_n}(V+\nu_n)) \to   \widetilde{H}^{\Aone}_{n+r-1} (\Th_{\Omega_{n-1}} V) \to  \widetilde{H}^{\Aone}_{n+r-1} (\Th_{\Omega_{n}} V) \to  \widetilde{H}^{\Aone}_{n+r-1} (\Th_{X_{n-1}} (V+\nu_{n-1})).
\]
For $n\geq 0$, let
\[
\tilde{C}_{n+r}^{\cell} (\Th_X V) := \tilde{C}_{n+r}^{\cell} (X,V) :=  \widetilde{H}^{\Aone}_{n+r} (\Th_{X_n}(V+ \nu_n))
\]
with boundary maps $\partial_n$ as above be the \defi{(shifted, reduced) cellular $\Aone$-chain complex} on $(X,V)$. With this notation, we record the cellular structure of $X$ also when we write $\Th_X V$. Note also that $\Th_X 0 \simeq X_+$ and $\tilde{C}_{\ast}^{\cell} \Th_X 0 \simeq C^{\cell}_{\ast} X$ with $C^{\cell}_{\ast} X$ defined by Morel and Sawant \cite[Section 2.3]{MorelSawant}.

\begin{rmk}\label{rmk:finite_cellular_schemes_Ccell_KMWbi}
Suppose $X$ in $\Sm_k$ is equipped with a simple cellular structure. By definition, $\tilde{C}_{n}^{\cell}(\Th_X V) \cong  \widetilde{H}^{\Aone}_{n} (\Th_{X_{n-r}}(V+ \nu_n))$. By  \cite[Lemma 2.13]{MorelSawant}, we may choose a trivialization of $V+ \nu_n$. Such a choice defines an $\Aone$-weak equivalence $\Th_{X_{n-r}}(V+ \nu_n) \simeq (X_{n-r})_+ \wedge (\bbb{A}^n/ \bbb{A}^n-\{0\})$. By Morel's computation~\eqref{HnAn/An-0}, we have $ \widetilde{H}^{\Aone}_{n} ((X_{n-r})_+ \wedge \bbb{A}^n/ \bbb{A}^n-\{0\})) \cong H^{\Aone}_0(X_{n-r}) \otimes \underline{K}^{\MW}_n $ for $n \geq 1$ and $\widetilde{H}^{\Aone}_{n} ((X_{n-r})_+ \wedge \bbb{A}^n/ \bbb{A}^n-\{0\})) \cong   H^{\Aone}_0(X_{n-r})$ for $n=0$. Since the cellular structure on $X$ is simple,  $H^{\Aone}_0(X_{i}) \cong \ZZ^{b_i}$. Thus we have 
\[
 \tilde{C}^\cell_i(\Th_X V)\cong \begin{cases}
(\underline{K}^{\MW}_i)^{b_{i-r}}   & i>0 \\
 \ZZ^{b_{-r}} & i=0.
\end{cases}
\]
If the cellular structure is additionally strict, then $\Omega_i \setminus \Omega_{i-1} \cong \coprod_{j=1}^{b_i} \bbb{A}^{n-i}$. In other words, the integer $b_i$ corresponds to the number of connected components of $\Omega_i \setminus \Omega_{i-1}$.
\end{rmk}

\begin{rmk}
More generally, for $X$ in $\Sm_k$ equipped with a cellular structure, $\tilde{C}_n^{\cell} (\Th_X V) \cong  H^{\Aone}_0(X_{n-r}) \otimes \underline{K}^{\MW}_n $. Thus $\tilde{C}_{\ast}^{\cell} (\Th_X V) $ is in $D(\Ab_{\Aone}^p(k))$.

\end{rmk}

To show functoriality of $\tilde{C}_{\ast}^{\cell}$, Morel and Sawant introduce the notion of a strict $\Aone$-resolution \cite[Definition 2.33]{MorelSawant} and we will need this notion as well.

\begin{pr}\label{pr:pos_shifted_cell_A1-solvable}
Let $\Th_X V$ be an object of $\calH_{\ast}^{\cell}(k)$. There exists a unique morphism
\[
\phi_X: \tilde{C}_*^{\Aone}(\Th_X V) \to \tilde{C}_*^{\cell}(\Th_X V)
\] in $D(\Ab_{\Aone}(k))$ that is a strict $\Aone$-resolution in the sense  that the functor
\[
D(\Ab_{\Aone}(k)) \to \Ab \quad \quad C_* \mapsto \Hom_{D_{\Aone}(k)}(\tilde{C}_*^{\Aone}(\Th_X V), C_*)
\] is represented by $\tilde{C}_*^{\cell}(\Th_X V)$ .
\end{pr}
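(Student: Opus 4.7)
The proposition extends Morel--Sawant's strict $\Aone$-resolution for cellular schemes \cite[Corollary 2.43]{MorelSawant} (the case $V=0$, where $\Th_X 0 \simeq X_+$) to Thom-space twists. My plan is to adapt their inductive construction, working with the pointed spaces $\Th_{\Omega_i}(V|_{\Omega_i})$ in place of $(\Omega_i)_+$; the proof reduces to checking that each step of their argument remains valid after the Thom twist, which is essentially automatic by naturality of the relevant constructions in $V$.

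The induction is on the length $s$ of the cellular filtration $\emptyset = \Omega_{-1} \subsetneq \cdots \subsetneq \Omega_s = X$. In the base case $s=0$, $X_0 = \Omega_0$ is smooth, affine, and cohomologically trivial, and $V|_{X_0}$ is trivializable by \cite[Lemma 2.13]{MorelSawant}. Hence Morel's identification \eqref{HnAn/An-0} produces a quasi-isomorphism of $\tilde{C}_*^{\Aone}(\Th_{X_0} V)$ with its reduced $\Aone$-homology, which is concentrated in a single degree and coincides with $\tilde{C}_r^{\cell}(\Th_X V)$; the map $\phi_{X_0}$ is the resulting natural truncation, and its universal property against strictly $\Aone$-invariant coefficients is immediate since both source and target then have the same Hom groups into any $C_*\in D(\Ab_{\Aone}(k))$.

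For the inductive step, combine the Morel--Voevodsky purity cofiber sequence
\[
\Th_{\Omega_{i-1}} V \longrightarrow \Th_{\Omega_i} V \longrightarrow \Th_{X_i}(V+\nu_i)
\]
with the analogous concentration of $\tilde{C}_*^{\Aone}(\Th_{X_i}(V+\nu_i))$ in degree $i+r$, where it agrees with $\tilde{C}_{i+r}^{\cell}(\Th_X V)$. The associated distinguished triangle in the $\Aone$-derived category, together with the inductive $\phi_{\Omega_{i-1}}$, yields $\phi_{\Omega_i}$ by filling in a morphism of triangles, and the induced boundary map matches by construction the connecting morphism $\partial_n$ defined above the proposition statement. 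Representability is then established by applying $\Hom_{D_{\Aone}(\Ab(k))}(-, C_*)$ for $C_* \in D(\Ab_{\Aone}(k))$ to both the geometric triangle and the cellular truncation triangle $\tilde{C}_{\leq i+r-1}^{\cell} \to \tilde{C}_{\leq i+r}^{\cell} \to \tilde{C}_{i+r}^{\cell}[i+r]$, and invoking the five-lemma against the inductive hypothesis on the outer terms. Uniqueness of $\phi_X$ then follows from Yoneda, as any two strict $\Aone$-resolutions represent the same functor.

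The main technical obstacle is the coherence of the choices made when filling in morphisms of triangles at each inductive step: one must verify that the resulting $\phi_X$ and the cellular boundary maps are independent of the lifts, so that the construction is well-defined and functorial in $(X,V)$. This amounts to a diagram chase using the octahedral axiom and the naturality of the Morel--Voevodsky purity isomorphism in both the open-closed decomposition and the vector bundle $V$; it follows the pattern of \cite[Lemma 2.31, Proposition 2.39]{MorelSawant} in the untwisted case with no substantive change, since the Thom twist introduces no new phenomena beyond a uniform dimension shift in each graded piece.
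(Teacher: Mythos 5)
Your approach mirrors the paper's at the top level: the paper's proof is the single sentence ``The proof of Morel--Sawant's Proposition 2.37 extends to this level of generality,'' and you are elaborating what that extension amounts to, correctly identifying the induction over the cellular filtration, the role of Morel--Voevodsky purity cofiber sequences, and the diagram chases needed to glue the maps $\phi_{\Omega_i}$ and verify uniqueness via Yoneda.

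There is, however, a genuine gap, appearing once in your base case and again at each inductive step: you assert that $\tilde{C}_*^{\Aone}(\Th_{X_0} V)$ (resp.\ $\tilde{C}_*^{\Aone}(\Th_{X_i}(V+\nu_i))$) is quasi-isomorphic to a single strictly $\Aone$-invariant sheaf concentrated in degree $r$ (resp.\ $i+r$), i.e.\ that the reduced $\Aone$-homology vanishes in all other degrees. This does not follow from cohomological triviality, and it is not what Morel's identification \eqref{HnAn/An-0} gives you --- that computes $\widetilde{H}_n^{\Aone}(\bA^n/\bA^n-\{0\})$ in the expected degree $n$ but says nothing about vanishing above. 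Already for $X_0 = \Spec k$ and $V = \calO$, the vanishing of $\widetilde{H}_m^{\Aone}(\PP^1)$ for $m \geq 2$ is, by the suspension isomorphism, equivalent to the vanishing of $\widetilde{H}_m^{\Aone}(\Gm)$ for $m \geq 1$, and this is not known; the possible discrepancy between cellular $\Aone$-homology and $\Aone$-homology is exactly one of the subtleties Morel--Sawant flag. Their notion of strict $\Aone$-resolution is designed precisely to sidestep any such concentration claim: it is a representability statement for the restricted functor $C_* \mapsto \Hom_{D_{\Aone}(\Ab(k))}(\tilde{C}_*^{\Aone}(\Th_X V), C_*)$ on $D(\Ab_{\Aone}(k))$, not a quasi-isomorphism. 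The correct base case ingredient is that cohomological triviality of $X_0$, together with projectivity of $H_0^{\Aone}(X_0)$ in $\Ab_{\Aone}(k)$, controls $\Hom_{D_{\Aone}(\Ab(k))}(\tilde{C}_*^{\Aone}(\Th_{X_0}V), M[n])$ via twisted Nisnevich cohomology of $X_0$ for $M \in \Ab_{\Aone}(k)$, and one then extends to bounded $C_*$ by a Postnikov induction. Once you substitute this weaker but correct statement for the quasi-isomorphism claim, your inductive step, five-lemma argument, and uniqueness via Yoneda go through as you describe.
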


\begin{proof}
The proof of \cite[Proposition 2.37]{MorelSawant} extends to this level of generality.
\end{proof}

\begin{co}\label{f:map_pos_cellular_to_map_cCell}
Let $f: \Th_X V \to \Th_Y W$ be a morphism in $\calH(k)_{\ast}^{\cell}$. Then there exists a canonical chain homotopy class of morphisms
\[
\tilde{C}_*^{\cell}(f): \tilde{C}_*^{\cell}( \Th_X V ) \to \tilde{C}_*^{\cell}( \Th_Y W)
\] in $\Hom_{\Ch_{\geq 0}(\Ab_{\Aone}(k))}(\tilde{C}_*^{\cell}( \Th_X V ), \tilde{C}_*^{\cell}(\Th_Y W) )$.
\end{co}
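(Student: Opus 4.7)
The plan is to obtain the chain map as an image under the representability statement of Proposition~\ref{pr:pos_shifted_cell_A1-solvable}, then upgrade from a morphism in the derived category to a chain homotopy class by invoking projectivity.

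First I would apply the total $\Aone$-chain complex functor $\tilde{C}_*^{\Aone} : \calH(k)_{\ast} \to D_{\Aone}(\Ab(k))$ to the morphism $f \colon \Th_X V \to \Th_Y W$, producing a morphism $\tilde{C}_*^{\Aone}(f) \colon \tilde{C}_*^{\Aone}(\Th_X V) \to \tilde{C}_*^{\Aone}(\Th_Y W)$ in the $\Aone$-derived category. Composing with the strict $\Aone$-resolution $\phi_Y \colon \tilde{C}_*^{\Aone}(\Th_Y W) \to \tilde{C}_*^{\cell}(\Th_Y W)$ from Proposition~\ref{pr:pos_shifted_cell_A1-solvable} gives an element of $\Hom_{D_{\Aone}(\Ab(k))}(\tilde{C}_*^{\Aone}(\Th_X V), \tilde{C}_*^{\cell}(\Th_Y W))$. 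By the representability property of $\phi_X$, this element corresponds canonically and uniquely to a morphism
\[
\tilde{C}_*^{\cell}(f) \in \Hom_{D(\Ab_{\Aone}(k))}(\tilde{C}_*^{\cell}(\Th_X V), \tilde{C}_*^{\cell}(\Th_Y W)),
\]
and taking $f$ to be an identity, or using compositions, shows this assignment is compatible with composition and the identity up to the ambiguity in the derived category.

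Next I would promote this morphism in $D(\Ab_{\Aone}(k))$ to a chain homotopy class in $\Ch_{\geq 0}(\Ab_{\Aone}(k))$. The point is that each term of $\tilde{C}_*^{\cell}(\Th_X V)$ is of the form $H_0^{\Aone}(X_{n-r}) \otimes \underline{K}^{\MW}_n$ (or $H_0^{\Aone}(X_{-r})$ in degree $0$) with $X_{n-r}$ cohomologically trivial, so these terms are projective in $\Ab_{\Aone}(k)$ (as noted just before the construction of $D(\Ab^p_{\Aone}(k))$, and using \cite[Example 2.36]{MorelSawant}). For a bounded-below complex of projectives $P_\ast$ and any complex $C_\ast$, the standard argument in homological algebra (lift along a quasi-isomorphism and show that any two lifts are chain homotopic, both proved by induction on degree using projectivity) identifies $\Hom_{D(\Ab_{\Aone}(k))}(P_\ast, C_\ast)$ with chain homotopy classes of chain maps $P_\ast \to C_\ast$. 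Applying this with $P_\ast = \tilde{C}_*^{\cell}(\Th_X V)$ and $C_\ast = \tilde{C}_*^{\cell}(\Th_Y W)$ lifts the morphism produced above to a canonical chain homotopy class, which is our $\tilde{C}_*^{\cell}(f)$.

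The main obstacle I anticipate is bookkeeping the compatibility with $\Aone$-localization: the functor $\tilde{C}_*^{\Aone}$ takes values in $D_{\Aone}(\Ab(k))$ while the target $\tilde{C}_*^{\cell}(\Th_Y W)$ naturally lives in $D(\Ab_{\Aone}(k))$, and the representability statement must be applied in the correct category. This was already handled in the unshifted Thom-trivial case by Morel and Sawant (their proof of \cite[Corollary 2.43]{MorelSawant}), so the expected path is to simply verify that their argument goes through verbatim in the presence of a general vector bundle $V$ of rank $r$, as Proposition~\ref{pr:pos_shifted_cell_A1-solvable} already asserts the representability in exactly the form we need.
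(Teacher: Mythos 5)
Your proposal is correct and follows essentially the same route as the paper's proof: apply $\tilde{C}_*^{\Aone}$ to $f$, use the representability statement of Proposition~\ref{pr:pos_shifted_cell_A1-solvable} to descend to a morphism between the cellular complexes in $D(\Ab_{\Aone}(k))$, and then use that $\tilde{C}_*^{\cell}(\Th_X V)$ is a bounded complex of projectives to identify derived-category morphisms with chain homotopy classes.
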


\begin{proof}
Since $\tilde{C}_*^{\Aone}(-): \calH_*(k) \to D_{\Aone}(k) $ is a functor (see \cite[p. 161]{A1-alg-top}), the morphism $f$ induces a map $\tilde{C}_*^{\Aone}(f): \tilde{C}_*^{\Aone}(\Th_X V) \to \tilde{C}_*^{\Aone}(\Th_Y W)$. Thus by Proposition~\ref{pr:pos_shifted_cell_A1-solvable}, there is a map  $\tilde{C}_*^{\cell}(f): C_*^{\cell}( \Th_X V ) \to \tilde{C}_*^{\cell}( \Th_Y W)$ in $D(\Ab_{\Aone}(k))$ such that the diagram
\[
\xymatrix{\tilde{C}_*^{\Aone}(\Th_X V) \ar[d]_{\phi} \ar^{\tilde{C}_*^{\Aone}(f)}[r] & \tilde{C}_*^{\Aone}(\Th_Y W)\ar[d]_{\phi}\\
\tilde{C}_*^{\cell}(\Th_X V) \ar[r]_{\tilde{C}_*^{\cell}(f)} & \tilde{C}_*^{\cell}(\Th_Y W)}
\] commutes. As noted previously, $\tilde{C}_*^{\cell}( \Th_X V )$ is a bounded complex of projective objects of $\Ab_{\Aone}(k)$. It follows that $\Hom_{D(\Ab_{\Aone}(k))}(\tilde{C}_*^{\cell}( \Th_X V ), \tilde{C}_*^{\cell}(\Th_Y W) )$ is the group of chain homotopy classes of maps in $\Hom_{\Ch_{\geq 0}(\Ab_{\Aone}(k))}(\tilde{C}_*^{\cell}( \Th_X V ), \tilde{C}_*^{\cell}(\Th_Y W) )$.
\end{proof}

The following is the generalization of \cite[Lemma 2.31]{MorelSawant} to include Thom spaces. The proof was omitted in \cite{MorelSawant}, so we include one for completeness.

\begin{lm}[K\"unneth Formula] \label{C:Kunnethpos}
Suppose that $X$ and $Y$ are smooth schemes equipped with cellular structures, and let $p_1$ and $p_2$ be the projections of $X \times_k Y$ to $X$ and $Y$, respectively. Let $V$ and $W $ be vector bundles on $X$ and $Y$, respectively. Then there exists a cellular structure on $X \times Y$ and a natural isomorphism in $D(\Ab^p_{\Aone}(k))$
\begin{equation} \label{eq:cellularkunnethpos}
\tilde{C}_\ast^{\cell}(\Th_X V) \otimes_{D(\Ab^p_{\Aone}(k))} \tilde{C}_\ast^{\cell}(\Th_Y W) \to \tilde{C}_\ast^{\cell}(\Th_{X \times Y}(p_1^* V + p_2^*W)).
\end{equation}
\end{lm}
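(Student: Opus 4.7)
My plan is to explicitly construct the product cellular structure on $X \times Y$, verify that the resulting cellular chain complex is the degreewise sum of smash products of the strata's Thom spaces, compute each factor's $\Aone$-homology using cohomological triviality, and finally match the geometric boundary maps with the Koszul-signed Leibniz differential.

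First, I would define the cellular filtration on $X\times Y$ by the usual combinatorial rule $\Omega_n(X \times Y) := \bigcup_{i+j=n} \Omega_i(X) \times \Omega_j(Y)$. A direct check shows the $n$-th stratum is the disjoint union $\bigsqcup_{i+j=n} X_i \times Y_j$: each such product is smooth, affine, of codimension $n$, and cohomologically trivial (the last using \cite[Remark 2.10]{MorelSawant}, which says cohomological triviality is preserved under products). The normal bundle of $X_i \times Y_j$ in $\Omega_n(X \times Y)$ is naturally $p_1^*\nu_i^X \oplus p_2^*\nu_j^Y$. Hence this is a cellular structure in the sense of Definition~\ref{df:cellular_structure}, and Morel--Voevodsky purity identifies the successive subquotients of the Thom-space filtration as
\[
\Th_{\Omega_n}(p_1^*V + p_2^*W)/\Th_{\Omega_{n-1}}(p_1^*V + p_2^*W) \;\simeq\; \bigvee_{i+j=n} \Th_{X_i}(V+\nu_i^X) \wedge \Th_{Y_j}(W+\nu_j^Y).
\]

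Second, I would compute the $\Aone$-homology of each smash summand. Since $X_i$ and $Y_j$ are cohomologically trivial, by \cite[Lemma 2.13]{MorelSawant} the bundles $V+\nu_i^X$ and $W+\nu_j^Y$ admit stable trivializations, reducing each summand to a suspension of $(X_i \times Y_j)_+$. Applying \eqref{HnAn/An-0} together with $H_0^{\Aone}(X_i\times Y_j) \cong H_0^{\Aone}(X_i) \otimes_{\Aone} H_0^{\Aone}(Y_j)$ and $\underline{K}^{\MW}_{i+r_V} \otimes_{\Aone}\underline{K}^{\MW}_{j+r_W} \cong \underline{K}^{\MW}_{n+r_V+r_W}$ gives the degreewise Künneth isomorphism
\[
\tilde{C}^{\cell}_{n+r_V+r_W}(\Th_{X\times Y}(p_1^*V+p_2^*W)) \;\cong\; \bigoplus_{i+j=n} \tilde{C}^{\cell}_{i+r_V}(\Th_X V) \otimes_{\Aone} \tilde{C}^{\cell}_{j+r_W}(\Th_Y W).
\]
This is defined in $D(\Ab^p_{\Aone}(k))$ because the terms on both sides lie in that subcategory by Remark~\ref{rmk:finite_cellular_schemes_Ccell_KMWbi}.

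Third, I would match differentials: the boundary map $\partial_n$ in the product cellular complex comes from the connecting morphism for the triple $(\Omega_{n-1} \subset \Omega_n \subset \Omega_{n+1})$, which decomposes along the disjoint product strata into a component lowering the $X$-index and a component lowering the $Y$-index. These two components can be identified with $\partial^X \otimes \id$ and $\id \otimes \partial^Y$, respectively. The crucial sign $(-1)^{i+r_V}$ needed to match the Leibniz rule in \eqref{eq:tau_DAbAone} arises from the swap in the smash factors $S^{i+r_V}\wedge S^{j+r_W}$ used implicitly when applying purity, and is controlled by the relation $\eta[-1]=0$ and the standard Koszul calculus in $\underline{K}^{\MW}_*$. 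Once the decomposition and signs are verified, the map \eqref{eq:cellularkunnethpos} exists and is a degreewise isomorphism of chain complexes, hence an isomorphism in $D(\Ab^p_{\Aone}(k))$. Naturality in $X$, $Y$, $V$, $W$ follows from the naturality of purity, of the long exact sequences defining $\partial_n$, and from Corollary~\ref{f:map_pos_cellular_to_map_cCell}.

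The main obstacle is the sign calculation in the third step: verifying that the geometrically defined boundary operators, after being expressed in terms of the product stratification and the trivializations chosen in the second step, genuinely produce the Koszul-signed Leibniz differential of $\otimes_{D(\Ab^p_{\Aone}(k))}$. The rest is a formal unwinding of purity and cohomological triviality; this sign bookkeeping is the step that requires the most care.
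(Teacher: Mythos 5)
Your proposal follows the same broad outline as the paper's proof: product cellular structure, identification of strata and Thom spaces via purity, degreewise isomorphism via stable trivializations of the normal bundles, and a differential check. The substantive difference is the order of operations. The paper first builds the comparison map \emph{naturally} — as the map induced on $\widetilde{H}^{\Aone}_*$ by the product-of-Thom-spaces equivalence $\Th_{X'}(V')\wedge\Th_{Y'}(W')\to\Th_{X'\times Y'}(V'\times W')$ — so that compatibility with the two purity filtrations (hence with the boundary maps) follows from naturality without ever choosing trivializations; only afterwards are trivializations used, and only to check that the already-defined chain map is an isomorphism in each degree. You reverse this: you first produce a degreewise isomorphism \emph{via} trivializations and then must verify, with a sign computation, that it is a chain map. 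That can be made to work, but it pushes the hard part (the sign bookkeeping) onto a trivialization-dependent map, which is exactly the awkwardness the paper sidesteps.

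Two smaller inaccuracies worth flagging. First, you cite the relation ``$\eta[-1]=0$'' as what controls the sign; this relation is false in $K^{\MW}_*$ (the actual relation is $\eta h=0$ with $h=2+[-1]\eta$). What actually governs the sign is the $(-\langle-1\rangle)$-graded commutativity of $\underline{K}^{\MW}_*$ (\cite[Corollary 3.8]{A1-alg-top}) together with Morel's formula for the swap on motivic spheres (\cite[Lemma 3.43(2)]{A1-alg-top}), which is what the paper invokes when the sign genuinely matters (in Lemma~\ref{C:symmetry}); in the Künneth lemma itself no such explicit sign verification is needed if the map is constructed naturally. Second, for the claim that the terms live in $D(\Ab^p_{\Aone}(k))$ you invoke Remark~\ref{rmk:finite_cellular_schemes_Ccell_KMWbi}, but that remark treats only \emph{simple} cellular structures; for a general cellular structure the relevant statement is the one immediately following it, namely $\tilde{C}_n^{\cell}(\Th_X V)\cong H_0^{\Aone}(X_{n-r})\otimes\underline{K}^{\MW}_n$.
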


\begin{proof}
Let $r$ and $s$ be the (nonnegative) ranks of $V$ and $W$, respectively. Let $\nu_i$ and $\mu_i$ denote the normal bundles of $X_i = \Omega_i(X) \setminus \Omega_{i-1}(X) \hookrightarrow \Omega_i(X)$ and $Y_i = \Omega_i(Y) \setminus \Omega_{i-1}(Y) \hookrightarrow \Omega_i(Y)$, respectively.

Note that there is a natural shifted cellular structure on $X \times Y$ equipped with $p_1^* V + p_2^* W \in K_0(X \times Y)$, where the open strata are $\Omega_n(X \times Y) = \cup_{i+j=n} (\Omega_i(X) \times \Omega_j(Y))$ and the closed strata are $(X \times Y)_n := \Omega_n(X \times Y) \setminus \Omega_{n-1}(X \times Y) = \sqcup_{i+j=n} X_i \times Y_j$. The normal bundle $\xi_n$ on the inclusion $(X \times Y)_n \hookrightarrow \Omega_n(X \times Y)$ is the disjoint union of the normal bundle on each $X_i \times Y_j$, namely $\xi_n = \sqcup_{i+j=n} (p_1^* \nu_i + p_2^* \mu_j)$.

We compare the degree $n+r+s$ terms of the two sides of \eqref{eq:cellularkunnethpos}. The degree $n+r+s$ term of the right hand side of \eqref{eq:cellularkunnethpos} is
\begin{align*}
\widetilde{H}_{n+r+s}^{\Aone}(\Th_{(X \times Y)_n}(p_1^*V& + p_2^* W + \xi_n)) \\
&\cong \widetilde{H}_{n+r+s}^{\Aone}(\Th_{\sqcup_{i+j=n} X_i \times Y_j}(p_1^*V + p_2^* W + \sqcup_{i+j=n} (p_1^* \nu_i + p_2^* \mu_j))) \\
&\cong \widetilde{H}_{n+r+s}^{\Aone}( \vee_{i+j=n} \Th_{X_i \times Y_j} (p_1^*V + p_2^*W + p_1^*\nu_i + p_2^* \mu_j)) \\
&\cong \bigoplus_{i+j=n} \widetilde{H}_{n+r+s}^{\Aone} (\Th_{X_i \times Y_j} (p_1^*V + p_2^*W + p_1^*\nu_i + p_2^* \mu_j)).
\end{align*}
The degree $n+r+s$ term of the left hand side of \eqref{eq:cellularkunnethpos} is
\[
\bigoplus_{i+j=n+r+s} \tilde{C}_i^\cell(\Th_X V) \otimes_{\Ab_{\Aone}(k)} \tilde{C}_j^\cell(\Th_Y W) \cong \bigoplus_{i+j=n} \widetilde{H}_{i+r}^{\Aone}(\Th_{X_i}(V+\nu_i)) \otimes_{\Ab_{\Aone}(k)} \widetilde{H}_{j+s}^{\Aone}(\Th_{Y_j}(W+\mu_j)).
\]

For any $X'$ and $Y'$ with $V' \in K_0(X')$ and $W' \in K_0(Y')$, we have a natural equivalence $\Th_{X'}(V') \times \Th_{Y'}(W') \to \Th_{X' \times Y'}(V' \times W')$. We therefore have an induced map \begin{align} \label{eq:kunnethoneterm}
\widetilde{H}_{i+r}^{\Aone}(\Th_{X_i}(V+\nu_i)) &\otimes_{\Ab_{\Aone}(k)} \widetilde{H}_{j+s}^{\Aone}(\Th_{Y_j}(W+\mu_j))\\
 &\to \widetilde{H}_{i+j+r+s}^{\Aone} (\Th_{X_i \times Y_j} (p_1^*V + p_2^*W +p_1^*\nu_i + p_2^* \mu_j)). \nonumber
\end{align}
Unwinding the definitions and using the functoriality of the Thom space, \eqref{eq:kunnethoneterm} induces a map of chain complexes \eqref{eq:cellularkunnethpos}, i.e. is compatible with the differentials.

We claim that for $i+j = n$ the map \eqref{eq:kunnethoneterm} is an isomorphism. By \cite[Lemma 2.13]{MorelSawant}, both $V+\nu_i$ and $W+\mu_j$ represent trivial elements of $K_0(X_i)$ and $K_0(Y_j)$ respectively. Choosing trivializations induces a trivialization of $p_1^*V + p_2^*W +p_1^*\nu_i + p_2^* \mu_j$ on $X_i \times Y_j$. These trivializations and the suspension isomorphism for $\Aone$-homology \cite[Remark 6.30]{A1-alg-top} induce the following isomorphisms:
\[ 
\widetilde{H}_{i+r}^{\Aone}(\Th_{X_i}(V+\nu_i)) \cong \widetilde{H}_{i+r}^{\Aone}((X_i)_+ \wedge (\G_m)^{\wedge(i+r)} \wedge (S^1)^{\wedge(i+r)}) \cong \widetilde{H}_0^{\Aone}((X_i)_+ \wedge (\G_m)^{\wedge(i+r)} )
\]
\[
\widetilde{H}_{j+s}^{\Aone}(\Th_{Y_j}(W+\mu_j)) \cong \widetilde{H}_{j+s}^{\Aone}((Y_j)_+ \wedge (\G_m)^{\wedge(j+s)} \wedge (S^1)^{\wedge(j+s)}) \cong \widetilde{H}_0^{\Aone}((Y_j)_+ \wedge (\G_m)^{\wedge(j+s)} )
\]
\begin{align*}
\widetilde{H}_{i+j+r+s}^{\Aone}(\Th_{X_i \times Y_j}&(p_1^*V + p_2^*W +p_1^*\nu_i + p_2^* \mu_j)) \\
 &\cong \widetilde{H}_{i+j+r+s}^{\Aone}((X_i \times Y_j)_+ \wedge (\G_m)^{\wedge(i+j+r+s)} \wedge (S^1)^{\wedge(i+j+r+s)})\\
 &\cong \widetilde{H}_0^{\Aone}((X_i \times Y_j)_+\wedge (\G_m)^{\wedge(i+j+r+s)} ).
\end{align*}
Note that $\widetilde{H}_{0}^{\Aone}$ takes a sheaf of sets to the free strictly $\Aone$-invariant sheaf of abelian groups on the sheaf of sets, and thus transforms $\wedge$ to $\otimes$. The claim follows.
\end{proof}

\begin{lm}\label{C:symmetry}
Suppose that $X$ and $Y$ are smooth schemes equipped with cellular structures, and let $p_1$ and $p_2$ be the projections of $X \times_k Y$ to $X$ and $Y$, respectively. Let $V$ and $W$ be vector bundles on $X$ and $Y$, respectively. Then $\tilde{C}_*^{\cell}$ respects the symmetry maps $\tau_{\calH_{\ast}^{\cell}(k)}(\Th_X V, \Th_Y W)$ and $\tau_{D(\Ab^p_{\Aone}(k))}$ in the sense that the diagram 
\begin{equation}\label{cd:swapHCellDAnAnone}
\xymatrix{
 \tilde{C}_*^{\cell}(\Th_X V) \otimes \tilde{C}_*^{\cell}(\Th_Y W) \ar[r]^{\cong}_-{m_1} \ar[d]_{\tau_{D(\Ab^p_{\Aone}(k))} } &  \tilde{C}_*^{\cell}(\Th_{X \times Y}  p_1^* V + p_2^* W) \ar[d]^{\tilde{C}_*^{\cell}(\tau_{\calH_*^{\cell}})}
 \\
 \tilde{C}_*^{\cell}(\Th_Y W) \otimes \tilde{C}_{*}^{\cell}(\Th_X V) \ar[r]^{\cong}_-{m_2} & \tilde{C}_*^{\cell}(\Th_{Y\times X} p_2^* W+p_1^* V)
 \\
}\end{equation} commutes.
\end{lm}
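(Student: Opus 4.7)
The plan is to verify the diagram by restricting to each bidegree and directly unwinding the construction of $m_1$ and $m_2$ given in the proof of Lemma~\ref{C:Kunnethpos}. The $(i+r, j+s)$-summand of $m_1$ was built by (i) choosing trivializations of $V+\nu_i$ on $X_i$ and $W+\mu_j$ on $Y_j$ and the induced trivialization of $p_1^*V+p_2^*W+p_1^*\nu_i+p_2^*\mu_j$ on $X_i\times Y_j$, so as to replace Thom spaces by smash products with $(\G_m)^{\wedge(*)}\wedge(S^1)^{\wedge(*)}$; (ii) applying the suspension isomorphism of \cite[Remark 6.30]{A1-alg-top} to reduce to $\widetilde{H}_0^{\Aone}$; (iii) invoking the strong monoidality of the functor $\widetilde{H}_0^{\Aone}$, which takes smash products of pointed sheaves of sets to tensor products of free strictly $\Aone$-invariant sheaves. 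The map $m_2$ admits the mirror-image description.

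To compare the two routes around the diagram, I would first note that $\tau_{D(\Ab_{\Aone}^p(k))}$ on the $(i+r,j+s)$-summand is, by definition, the Koszul sign $(-1)^{(i+r)(j+s)}$ composed with the swap of tensor factors in $\Ab_{\Aone}(k)$. For the ``clockwise'' route, the Thom space swap $\tau_{\calH^{\cell}_*}$ becomes, after the trivializations and suspension isomorphism, the composite of: (a) swapping the scheme factors $(X_i)_+\wedge(Y_j)_+ \to (Y_j)_+\wedge(X_i)_+$, (b) swapping the two blocks $(\G_m)^{\wedge(i+r)}$ and $(\G_m)^{\wedge(j+s)}$, and (c) swapping the two blocks $(S^1)^{\wedge(i+r)}$ and $(S^1)^{\wedge(j+s)}$. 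Step (c) contributes precisely the sign $(-1)^{(i+r)(j+s)}$ under the iterated suspension isomorphism, matching the Koszul sign on the left; steps (a) and (b), after passing to $\widetilde{H}_0^{\Aone}$ and using its monoidality together with the identification $\widetilde{H}_0^{\Aone}((X_i)_+\wedge (\G_m)^{\wedge(i+r)})\cong H_0^{\Aone}(X_i)\otimes \underline{K}^{\MW}_{i+r}$, realize the swap of tensor factors in $\Ab_{\Aone}(k)$.

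The main obstacle is the bookkeeping of signs, since in principle a swap of two $\G_m$ factors past each other can contribute a factor of $\epsilon = \langle -1\rangle$, and similar potential $\epsilon$-contributions appear when permuting the $\G_m$ and $S^1$ factors of different blocks past each other to reach the standard normal form used in the proof of Lemma~\ref{C:Kunnethpos}. I would organize the argument so that all such contributions are absorbed into the identifications $\underline{K}^{\MW}_{i+r}\otimes\underline{K}^{\MW}_{j+s}\cong \underline{K}^{\MW}_{i+j+r+s}$ (using $\epsilon$-commutativity of multiplication in $\underline{K}^{\MW}_*$) and the reorganization of the smash factors performed in the same way on both sides of the diagram, so that the only asymmetry between the two routes is exactly the $S^1$-block swap, which produces precisely the Koszul sign and no more.
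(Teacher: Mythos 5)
Your proposal follows essentially the same strategy as the paper's proof: reduce to bidegree summands, use trivializations and the suspension isomorphism to identify terms with $H_0^{\Aone}\otimes \underline{K}^{\MW}$, and then match the Koszul sign against the geometric sphere-swap while absorbing the remaining $(-\langle-1\rangle)^{ij}$-discrepancy into the $\epsilon$-graded commutativity of Milnor--Witt $K$-theory. The only cosmetic difference is that you derive the $\GW(k)$-value of the sphere swap by decomposing into $S^1$- and $\G_m$-blocks, whereas the paper quotes Morel's formula $\tau(S^{p_1,q_1},S^{p_2,q_2})=(-1)^{(p_1-q_1)(p_2-q_2)}(-\langle-1\rangle)^{q_1q_2}$ from \cite[Lemma 3.43(2)]{A1-alg-top} directly; both lead to the same $\langle-1\rangle^{ij}$ and the same cancellation.
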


\begin{proof}

With appropriate orientation data, \eqref{cd:swapHCellDAnAnone} becomes a sum of diagrams of the form
\begin{equation}\label{cd:oriented_swapHCellDAnAnone}
\xymatrix{
H^{\Aone}_0(X_{i-r}) \otimes_{\Aone} \underline{K}^{\MW}_i \otimes_{\Aone} H^{\Aone}_0(Y_{j-s}) \otimes_{\Aone} \underline{K}^{\MW}_j  \ar[r] \ar[d]_{\tau_{D(\Ab^p_{\Aone}(k))} } & H^{\Aone}_0(X_{i-r} \times Y_{j-s}) \otimes_{\Aone} \underline{K}^{\MW}_n \ar[d]^{\tilde{C}_*^{\cell}(\tau_{\calH_*^{\cell}})}
 \\
 H^{\Aone}_0(Y_{j-s}) \otimes_{\Aone} \underline{K}^{\MW}_j \otimes_{\Aone} H^{\Aone}_0(X_{i-r}) \otimes_{\Aone} \underline{K}^{\MW}_i \ar[r] & H^{\Aone}_0(Y_{j-s}\times X_{i-r}) \otimes_{\Aone} \underline{K}^{\MW}_n 
 \\
}
\end{equation} where $i+j = n$, $i,j \geq 1$, along with similar diagrams where $i$ or $j$ is $0$ and the appropriate factors of $ \underline{K}^{\MW}_{\ast} $ are omitted. In \eqref{cd:oriented_swapHCellDAnAnone} the map $\tilde{C}_*^{\cell}(\tau_{\calH_*^{\cell}})$ is induced by applying $\widetilde{H}^{\Aone}_{n}$ to a swap map on spaces of the form $(X_i)_+ \wedge S^{2i, i}\wedge (Y_{j})_+ \wedge S^{2j,j}$ with $X_i$ and $Y_j$ cohomologically trivial. This map is multiplication by the swap map $ S^{2i, i} \wedge S^{2j,j} \to S^{2j,j} \wedge S^{2i, i}$ in $\GW(k)$. This element of $\GW(k)$ equals $\langle -1 \rangle^{ij}$ by \cite[Lemma 3.43(2)]{A1-alg-top}, which states that $\tau(S^{p_1,q_1},S^{p_2,q_2})$ represents $(-1)^{(p_1-q_1)(p_2-q_2)}(- \langle -1 \rangle)^{q_1 q_2}$ in $\GW(k)$. By definition, the map $\tau_{D(\Ab^p_{\Aone}(k))}$ is multiplication by $(-1)^{ij}$ times the canonical swap on a tensor product (see \eqref{eq:tau_DAbAone}). 

The map $m_1$ is the tensor product of the isomorphism $H^{\Aone}_0(X_{i-r}) \otimes_{\Aone} H^{\Aone}_0(Y_{j-s}) \to H^{\Aone}_0(X_{i-r} \times Y_{j-s}) $ with the isomorphism $ \underline{K}^{\MW}_i \otimes_{\Aone}  \underline{K}^{\MW}_j \to  \underline{K}^{\MW}_n$ induced by multiplication on Milnor--Witt K-theory. A similar statement holds for $m_2$ where the factors are reversed. Since Milnor--Witt K-theory is $-\lra{-1}$ graded commutative \cite[Corollary 3.8]{A1-alg-top}, it follows that $m_2$ is a canonical swap on a tensor product to reorder the factors followed by $(-\lra{-1})^{ij}$ times $m_1$. Since $(-1)^{ij} (-\lra{-1})^{ij} = \langle -1 \rangle^{ij}$, the claim follows.
\end{proof}

\begin{pr}\label{Ccell_symmetric_monoidal}
The functor $\tilde{C}_{\ast}^{\cell}:  \calH(k)^{\cell}_{\ast} \to D(\Ab^p_{\Aone}(k))$ is symmetric monoidal.
\end{pr}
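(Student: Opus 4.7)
The plan is to assemble the three coherence data (unit, monoidality/K\"unneth, symmetry) from the results already established, and then check the three required coherence diagrams (unitality, associativity, symmetry).

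First I would identify the unit isomorphism $\tilde{C}_{\ast}^{\cell}(1_{\calH(k)_{\ast}^{\cell}}) \cong 1_{D(\Ab^p_{\Aone}(k))}$. The unit of $\calH(k)^{\cell}_{\ast}$ is $S^0 = (\Spec k)_+$, equipped with the trivial cellular structure $\Omega_0 = \Spec k$ with zero vector bundle. Then by construction $\tilde{C}_{\ast}^{\cell}((\Spec k)_+)$ is concentrated in degree $0$ with value $H_0^{\Aone}(\Spec k) \cong \ZZ$, which is the unit of $\bigl(D(\Ab^p_{\Aone}(k)), \otimes_{D(\Ab^p_{\Aone}(k))}\bigr)$. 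This gives the required natural isomorphism $F(1_{\calC}) \cong 1_{\calD}$.

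Next, the K\"unneth isomorphism of Lemma~\ref{C:Kunnethpos} gives the natural transformation
\[
\iota_{\Th_X V, \Th_Y W} : \tilde{C}_{\ast}^{\cell}(\Th_X V) \otimes_{D(\Ab^p_{\Aone}(k))} \tilde{C}_{\ast}^{\cell}(\Th_Y W) \xrightarrow{\cong} \tilde{C}_{\ast}^{\cell}(\Th_X V \wedge \Th_Y W),
\]
using the identification $\Th_X V \wedge \Th_Y W \simeq \Th_{X \times Y}(p_1^{\ast} V + p_2^{\ast} W)$. Its naturality in each variable is established by noting that morphisms in both the source and target of $\tilde{C}_*^{\cell}$ are computed via Corollary~\ref{f:map_pos_cellular_to_map_cCell} from chain-level maps that respect the tensor-product description given in the proof of Lemma~\ref{C:Kunnethpos}: on each bigraded piece, the K\"unneth map is induced by the inclusion of a wedge summand into $\Th_{X\times Y}(\ldots)$, and this is clearly natural in $(\Th_X V,\Th_Y W)$.

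Then I would verify the three coherence axioms of a symmetric monoidal functor. The symmetry compatibility is exactly Lemma~\ref{C:symmetry}. For unitality, one checks that under $\iota_{(\Spec k)_+, \Th_Y W}$ the degree-wise identification $\ZZ \otimes_{\Ab^p_{\Aone}(k)} \tilde{C}_n^{\cell}(\Th_Y W) \cong \tilde{C}_n^{\cell}(\Th_Y W)$ is the one produced by applying $\widetilde{H}^{\Aone}_{\ast}$ to the canonical equivalence $(\Spec k)_+ \wedge \Th_Y W \simeq \Th_Y W$; this is immediate from the definition. For associativity, the two bracketings of $\tilde{C}_{\ast}^{\cell}(\Th_X V)\otimes \tilde{C}_{\ast}^{\cell}(\Th_Y W)\otimes \tilde{C}_{\ast}^{\cell}(\Th_Z U)$ are both identified with $\tilde{C}_{\ast}^{\cell}(\Th_{X\times Y\times Z}(p_1^{\ast}V+p_2^{\ast}W+p_3^{\ast}U))$ by the iterated K\"unneth map. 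Chasing through the proof of Lemma~\ref{C:Kunnethpos}, each $(n_1,n_2,n_3)$-summand is, after choosing trivializations of the relevant normal bundles and virtual bundles on $X_{n_1-r}\times Y_{n_2-s}\times Z_{n_3-t}$, identified via the suspension isomorphism with $\widetilde{H}_0^{\Aone}\bigl((X_{n_1-r}\times Y_{n_2-s}\times Z_{n_3-t})_+ \wedge (\Gm)^{\wedge(n_1+n_2+n_3+r+s+t)}\bigr)$, and both bracketings yield the same map because associativity holds for $\otimes_{\Ab^p_{\Aone}(k)}$ and for the product of cohomologically trivial schemes (\cite[Remark 2.10]{MorelSawant}).

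The main obstacle I expect is the coherence bookkeeping in associativity, specifically checking that the signs and Koszul swaps appearing in $\otimes_{D(\Ab^p_{\Aone}(k))}$ via~\eqref{eq:tau_DAbAone} match the signs appearing in the iterated suspension/K\"unneth identifications for Thom spaces. These sign computations are essentially the ones already executed in Lemma~\ref{C:symmetry} via Morel's formula \cite[Lemma 3.43(2)]{A1-alg-top}, so the associativity pentagon should reduce to the identity $(-1)^{(i+j)k} = (-1)^{ik}(-1)^{jk}$ in $\GW(k)$ after the standard reduction to pieces $S^{2i,i}\wedge S^{2j,j}\wedge S^{2k,k}$.
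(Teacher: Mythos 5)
Your proposal follows the same approach as the paper, which simply cites Corollary~\ref{f:map_pos_cellular_to_map_cCell} (functoriality), Lemma~\ref{C:Kunnethpos} (the K\"unneth monoidality isomorphism), and Lemma~\ref{C:symmetry} (compatibility with the symmetry). You spell out the unit isomorphism and the unitality/associativity coherence checks that the paper leaves implicit, and your reasoning there is sound (with the minor remark that the associativity pentagon does not actually involve the Koszul swap $\tau$ at all, only the canonical associator, so no sign verification beyond choosing trivializations compatibly is needed).
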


\begin{proof}
This follows from Corollary~\ref{f:map_pos_cellular_to_map_cCell}, Lemma~\ref{C:Kunnethpos}, and Lemma ~\ref{C:symmetry}.
\end{proof}

\section{Spanier--Whitehead cellular complex} \label{sec:SWcellcomplex}

\subsection{Definitions and basic properties}
Recall we defined $\cat{\SW}^{\cell}(k):= \calH(k)^{\cell}_{\ast}[(\PP^1)^{\otimes -1}]$ in Section~\ref{subsection:Cellular_SW_cat}. Since $\tilde{C}_*^{\cell}$ is a symmetric monoidal functor and the cyclic permulation of $\PP^1 \otimes \PP^1 \otimes \PP^1$ is the identity in $\calH(k)_{\ast}$, the cyclic permutation of $\tilde{C}_*^{\cell}(\PP^1) \otimes \tilde{C}_*^{\cell}(\PP^1) \otimes \tilde{C}_*^{\cell}(\PP^1)$ is the identity in $D(\Ab^p_{\Aone}(k))$. Denote the corresponding Spanier--Whitehead category (as in Definition \ref{df:SWcat}) by
$$\DpSWAone:=D(\Ab^p_{\Aone}(k))[\tilde{C}_*^{\cell}(\PP^1)^{\otimes -1}].$$
By \cite[Corollary 2.51]{MorelSawant}, the complex $C^{\cell}_{\ast}(\PP^1) \simeq \tilde{C}_*^{\cell}(\PP^1_+)$ is represented by the complex 
\[
\underline{K}^{\MW}_1 \stackrel{0}{\rightarrow} \ZZ. 
\] Thus $\tilde{C}^{\cell}_*(\PP^1) \simeq \underline{K}^{MW}_1[1]$ and we have
\begin{equation}\label{DpSWAone_KMW1invert}
\DpSWAone \simeq D(\Ab^p_{\Aone}(k))[(\underline{K}^{MW}_1[1])^{\otimes -1}].
\end{equation}

\begin{df}\label{df:CcellSW}
Define
\[
\CcellSW: \cat{\SW}^{\cell}(k)\to \DpSWAone
\] to be the symmetric monoidal functor obtained by applying Proposition~\ref{pr:SWfunctor} to the symmetric monoidal functor $\tilde{C}_*^{\cell}:  \calH(k)^{\cell}_{\ast} \to D(\Ab^p_{\Aone}(k))$ of Proposition \ref{Ccell_symmetric_monoidal}.
\end{df}

\begin{co}\label{cor:trace_commutes_with_Ccell}
Let $X$ be smooth and projective over a field $k$ and suppose that $X$ admits a cellular structure. For any endomorphism \(\varphi \colon X \to X\) and any integer \(m \geq 1\), we have
\[ \Tr(\CcellSW(\varphi^m)) = \CcellSW(\Tr(\varphi^m)). \] 
\end{co}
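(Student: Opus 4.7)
The plan is to deduce this corollary as an essentially immediate consequence of Proposition~\ref{HrespectDtr} (the statement that symmetric monoidal functors preserve duals and categorical traces), once the two hypotheses of that proposition are verified in our setting: (i) $\CcellSW$ is symmetric monoidal, and (ii) $X_+$ is dualizable in $\cat{\SW}^{\cell}(k)$, so that both sides of the claimed equality are defined and Proposition~\ref{HrespectDtr}(2) applies to the endomorphism $\varphi^m \colon X_+ \to X_+$.

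First I would note that by Lemma~\ref{Xsmproj_is_dualizable_SW}, the assumption that $X$ is smooth projective with a cellular structure guarantees that $X_+$ is dualizable in $\cat{\SW}^{\cell}(k)$. Consequently, any endomorphism of $X_+$ in $\cat{\SW}^{\cell}(k)$, and in particular $\varphi^m$ (which is an endomorphism of $X$ in $\Sm_k$ and hence of $X_+$ in $\calH(k)^{\cell}_\ast$ and thus in $\cat{\SW}^{\cell}(k)$), has a well-defined categorical trace $\Tr(\varphi^m) \in \End(1_{\cat{\SW}^{\cell}(k)})$, so the right-hand side $\CcellSW(\Tr(\varphi^m))$ is defined.

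Next, by Definition~\ref{df:CcellSW}, the functor $\CcellSW$ is obtained by applying Proposition~\ref{pr:SWfunctor} to the symmetric monoidal functor $\tilde{C}_\ast^{\cell} \colon \calH(k)^{\cell}_\ast \to D(\Ab^p_{\Aone}(k))$ of Proposition~\ref{Ccell_symmetric_monoidal}. By the conclusion of Proposition~\ref{pr:SWfunctor}, $\CcellSW$ is itself a symmetric monoidal functor. Proposition~\ref{HrespectDtr}(1) then implies that $\CcellSW(X_+)$ is dualizable in $\DpSWAone$ with dual $\CcellSW(\D X_+)$, so that $\Tr(\CcellSW(\varphi^m))$ is defined as well.

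Finally, applying Proposition~\ref{HrespectDtr}(2) directly to the symmetric monoidal functor $\CcellSW$, the dualizable object $X_+$, and the endomorphism $\varphi^m$ yields the desired equality
\[
\Tr(\CcellSW(\varphi^m)) = \CcellSW(\Tr(\varphi^m)),
\]
completing the proof. There is essentially no obstacle here since all the substantive content has been packaged into the preceding results (symmetric monoidality of $\tilde{C}_\ast^{\cell}$ in Proposition~\ref{Ccell_symmetric_monoidal}, the passage to Spanier--Whitehead categories in Proposition~\ref{pr:SWfunctor}, and dualizability in Lemma~\ref{Xsmproj_is_dualizable_SW}); the corollary is their formal combination.
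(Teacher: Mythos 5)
Your proof is correct and takes essentially the same approach as the paper: dualizability of $X_+$ from Lemma~\ref{Xsmproj_is_dualizable_SW}, symmetric monoidality of $\CcellSW$ from Definition~\ref{df:CcellSW}/Proposition~\ref{pr:SWfunctor}, and then Proposition~\ref{HrespectDtr}. The only cosmetic difference is that the paper first reduces to $m=1$ while you apply the argument directly to $\varphi^m$; both are fine.
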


\begin{proof}
We may assume \(m=1\). By Proposition~\ref{Xsmproj_is_dualizable_SW}, $X$ is dualizable in $\cat{\SW}^{\cell}(k)$. The result follows by applying Proposition~\ref{HrespectDtr}  to the symmetric monoidal functor $\CcellSW$.
\end{proof}

\begin{pr}\label{L:changeofcategory}
The morphism
\begin{equation} \label{eq:CSWcellEnd} \CcellSW: \End(1_{\cat{\SW}^{\cell}(k)})  \to \End(1_{\DpSWAone})
\end{equation}
 is an isomorphism, and both sides are isomorphic to $\GW(k)$.
\end{pr}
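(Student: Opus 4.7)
The plan is to identify both sides independently with $\GW(k)$ and then verify that $\CcellSW$ corresponds to the identity under these identifications. For the left side, the functor $\cat{\SW}^{\cell}(k) \to \cat{\SW}(k)$ is fully faithful since $\calH(k)^{\cell}_{\ast}$ is a full subcategory of $\calH(k)_{\ast}$ containing $\PP^1$, and the functor $\cat{\SW}(k) \to \SH(k)$ is fully faithful on Thom spaces by a theorem of Voevodsky recalled in Section~\ref{sec:SWintro}, in particular on the unit $\Spec k_{+} = \Th_{\Spec k}(0)$. Combined with Morel's theorem $\End_{\SH(k)}(1) \cong \GW(k)$, this gives $\End(1_{\cat{\SW}^{\cell}(k)}) \cong \GW(k)$.

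For the right side, using \eqref{DpSWAone_KMW1invert} together with the multiplicative structure $\underline{K}^{\MW}_1 \otimes_{\Ab_{\Aone}(k)} \underline{K}^{\MW}_n \cong \underline{K}^{\MW}_{n+1}$ of Milnor--Witt $K$-theory, I identify $(\underline{K}^{\MW}_1[1])^{\otimes n} \cong \underline{K}^{\MW}_n[n]$, so that
$$\End(1_{\DpSWAone}) = \colim_n \Hom_{D(\Ab^p_{\Aone}(k))}(\underline{K}^{\MW}_n[n], \underline{K}^{\MW}_n[n]).$$
Since $D(\Ab^p_{\Aone}(k))$ is a full subcategory of $D(\Ab_{\Aone}(k))$ and $\underline{K}^{\MW}_n$ is concentrated in degree $0$, this Hom reduces to $\Hom_{\Ab_{\Aone}(k)}(\underline{K}^{\MW}_n, \underline{K}^{\MW}_n)$. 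Evaluating Lemma~\ref{HomAbA1_KMWnmn+m} on global sections identifies this with $\underline{K}^{\MW}_0(k) = \GW(k)$, and the transition maps in the colimit (tensoring with $\id_{\underline{K}^{\MW}_1[1]}$) act by the identity, yielding $\End(1_{\DpSWAone}) \cong \GW(k)$.

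Finally, I check that $\CcellSW$ is the identity under these identifications. Morel's identification $\End_{\calH(k)_{\ast}}(\PP^{1 \wedge n}) \cong \GW(k)$ for $n \geq 2$ proceeds via the induced action on $\widetilde{H}^{\Aone}_n(\PP^{1 \wedge n}) \cong \underline{K}^{\MW}_n$, under which $\alpha \in \GW(k) = \underline{K}^{\MW}_0(k)$ acts by Milnor--Witt multiplication. By definition, $\CcellSW(\alpha)$ induces the same action on the sole nonzero homology of $\tilde{C}_*^{\cell}(\PP^{1 \wedge n}) \simeq \underline{K}^{\MW}_n[n]$, so $\CcellSW$ coincides with Morel's isomorphism and is therefore an isomorphism.

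The main obstacle is making this last compatibility rigorous. A concrete verification reduces to checking on the generators $\langle u \rangle \in \GW(k)$, which are represented geometrically by scaling maps $(x_1, \ldots, x_n) \mapsto (u x_1, x_2, \ldots, x_n)$ on $\bbb{A}^n/(\bbb{A}^n - \{0\}) \simeq \PP^{1 \wedge n}$; these induce multiplication by $\langle u \rangle$ on $\widetilde{H}^{\Aone}_n \cong \underline{K}^{\MW}_n$ through the Milnor--Witt ring structure used in \eqref{HnAn/An-0}.
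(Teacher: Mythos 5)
Your proof is correct and takes essentially the same approach as the paper's. The only superficial difference is that for the left-hand side you route through the fully faithful embedding $\cat{\SW}^{\cell}(k) \to \cat{\SW}(k) \to \SH(k)$ and Morel's stable computation, whereas the paper directly cites the colimit description $\colim_n \End(S^{2n,n}) \cong \GW(k)$; these are equivalent, as the full-faithfulness result in $\SW(k) \to \SH(k)$ rests on the same stabilization. Your treatment of the final compatibility (that $\CcellSW$ respects the $\GW(k)$-module structure) is a bit more spelled out than the paper's one-line appeal, but the underlying idea — checking on generators $\langle u\rangle$ acting by Milnor--Witt multiplication on $\widetilde{H}^{\Aone}_n$ — is the same.
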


\begin{proof}
We compute $\CcellSW(1_{\cat{\SW}^{\cell}(k)}) = \CcellSW(\Spec k_+) = \ZZ$.
By the identification \eqref{DpSWAone_KMW1invert}, we obtain 
\begin{align*}
\End&(1_{\DpSWAone}) \\
&= \colim_{n \geq 0} \End_{D(\Ab_{\Aone}(k))}(\underline{K}^{MW}_1[1]^{\otimes n}) \\
& \cong \colim_{n \geq 0} \End_{D(\Ab_{\Aone}(k))}(\underline{K}^{MW}_n[n]) &&\text{(by \cite[Theorem 3.37]{A1-alg-top})} \\
& \cong \colim_{n \geq 0} \End_{\Ab_{\Aone}(k)}(\underline{K}^{MW}_n) &&\text{(since $\underline{K}^{MW}_n$ is projective by \cite[Theorem 3.37]{A1-alg-top})} \\
 & \cong \colim_{n > 0} \GW (k) &&\text{(by Lemma~\ref{HomAbA1_KMWnmn+m})} \\
 &\cong \GW(k).
\end{align*}
We have $\End(1_{\cat{\SW}^{\cell}(k)}) =\colim_{n \geq 0} \End(S^{2n,n}) \cong \GW(k) $ by \cite[Cor 6.43]{A1-alg-top}. Since $H_*^{\Aone}$ respects the $\GW(k)$-module structure on $\End(S^{2n,n})$, so does $\CcellSW$. Thus the map \eqref{eq:CSWcellEnd} becomes the identity map on $\GW(k)$ under the given isomorphisms.
\end{proof}

\subsection{Endomorphisms, traces, and characteristic polynomials}\label{characteristic_poly_def}

Given an endomorphism \(\varphi\) of a dualizable object in \(\DpSWAone\), we can use the categorial trace of \(\varphi\) and its powers to define a \defi{logarithmic characteristic polynomial} of \(\varphi\), which, using Proposition~\ref{L:changeofcategory}, is a power series with coefficients in \(\GW(k)\).

\begin{df}\label{df:log_derivative_char_poly_varphi}
The \defi{logarithmic characteristic polynomial} of an endomorphism \(\varphi\) of a dualizable object in \(\DpSWAone\) 
is the power series defined by
\[\dlog P_\varphi(t) \colonequals \sum_{m=1}^\infty - \Tr(\varphi^m) t^{m-1} \in \GW(k)[[t]].\]
\end{df}

We use the convention that $\dlog$ indicates a formal logarithmic derivative, while $\frac{d}{dt}\log$ denotes the derivative of the logarithm. 

This definition is motivated by the usual definition of the characteristic polynomial of a square matrix.

\begin{df}\label{df:char_poly}
  For any commutative ring $R$ and endomorphism $\varphi$ of $R^n$ represented by a matrix $A$, the \defi{characteristic polynomial} \(P_\varphi(t)\) is defined as
\(P_\varphi(t) := \det(1 - At)\).  
\end{df}

We then have the following elementary relation.
\begin{lm}\label{lem:linear_algebra}
Let \(R\) be a commutative ring and let \(A \colon R^n \to R^n\) be an endomorphism.  Then
\[\frac{d}{dt} \log(P_A(t)) = \sum_{m=1}^\infty - \Tr(A^m)t^{m-1}.\]
\end{lm}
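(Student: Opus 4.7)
This is a classical identity whose proof is a short computation from Jacobi's formula for the logarithmic derivative of a determinant. I would proceed in three steps.

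First, observe that $M(t) \colonequals 1 - At$ is a unit in $M_n(R[[t]])$ because its constant term is the identity matrix; concretely, its inverse is the geometric series
\[
M(t)^{-1} = \sum_{m \geq 0} A^m t^m,
\]
which converges in the $t$-adic topology on $R[[t]]$. In particular $P_A(t) = \det M(t)$ is a unit in $R[[t]]$.

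Next, I would invoke Jacobi's formula
\[
\frac{d}{dt}\log \det M(t) = \Tr\bigl(M(t)^{-1} M'(t)\bigr),
\]
valid over any commutative ring whenever $\det M(t)$ is a unit. It follows from the universal identity $\partial_N \det(M) = \Tr(\operatorname{adj}(M)\,N)$ (proved by cofactor expansion, hence an identity in $\Z[\text{entries of }M,N]$) combined with $M^{-1} = \det(M)^{-1} \operatorname{adj}(M)$. Applying it to $M(t) = 1-At$ with $M'(t) = -A$ gives
\[
\frac{d}{dt}\log P_A(t) = -\Tr\Bigl(A \sum_{m\geq 0} A^m t^m\Bigr) = -\sum_{m\geq 0} \Tr(A^{m+1}) t^m = \sum_{m\geq 1} -\Tr(A^m)\, t^{m-1},
\]
using linearity of the trace and commuting $A$ past the sum.

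The only potential obstacle is the justification of Jacobi's formula over a general commutative ring $R$, but this is handled by the universal-coefficient argument just sketched. As an alternative (and perhaps conceptually cleaner) route, one can bypass Jacobi entirely: it suffices to verify the identity in the universal case $R = \Z[x_{ij}]$ with $A = (x_{ij})$ the generic $n\times n$ matrix, then base change to an algebraic closure of the fraction field and reduce to the upper-triangular case with eigenvalues $\lambda_1,\dots,\lambda_n$, where $P_A(t) = \prod_i(1-\lambda_i t)$ and $\Tr(A^m) = \sum_i \lambda_i^m$, so that both sides of the claimed identity reduce to $-\sum_{m\geq 1}\bigl(\sum_i \lambda_i^m\bigr) t^{m-1}$ by the elementary expansion $\log(1-\lambda_i t) = -\sum_{m\geq 1} \lambda_i^m t^m/m$.
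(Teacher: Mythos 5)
Your proposal is correct, and your primary route—Jacobi's formula for the logarithmic derivative of a determinant, applied directly in $R[[t]]$—is genuinely different from the paper's. The paper argues by comparing coefficients: each side's coefficient of $t^i$ is a universal integer polynomial in the entries of $A$, so equality can be checked after base change to $\C$, where one upper-triangularizes and expands $\log\prod(1-c_i t)$. Your Jacobi computation stays entirely over $R[[t]]$ and avoids any passage to a field or to eigenvalues: once one knows the universal identity $\tfrac{d}{dt}\det M = \Tr(\operatorname{adj}(M)\,M')$ and that $1-At$ is a unit with inverse $\sum A^m t^m$, the result falls out in two lines. The trade-off is that Jacobi's formula itself requires a universal-polynomial justification (or a derivation from multilinearity of $\det$), so the two proofs ultimately rest on the same kind of input, but your route localizes the universal argument to a single well-known identity rather than to the full statement. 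Your "alternative" second paragraph is, in fact, essentially the paper's own proof, so you have independently reproduced it as well. One minor stylistic point: the paper's reduction passes to $\C$ rather than to "an algebraic closure of the fraction field of $\Z[x_{ij}]$," but these are interchangeable since a polynomial over $\Z$ vanishing on all of $\C^{n^2}$ is zero.
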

\begin{proof}
We will show that these two power series agree by comparing each coefficient.  The coefficient of \(t^i\) on each side is a polynomial (with integer coefficients) in the entries of the matrix representing the endomorphism \(A\).  To show that these integer polynomials agree, it suffices to prove that they take the same value on every complex number.  In other words, it suffices to prove the theorem when \(R = \CC\).  In this case, we may assume that the matrix of \(A\) is upper-triangular with diagonal entries \(c_1, \dots, c_n\) (since trace is independent of a choice of basis).  We have
\( P_A(t) = \prod_{i=1}^n (1-tc_i)\) and \(\Tr(A^m) = \sum_{i=1}^n c_i^m\),
which gives
\begin{align*}
\frac{d}{dt} \log(P_A(t)) &= \sum_{i=1}^n \left(- \sum_{m=1}^\infty c_i^m t^{m-1}\right) \\
&= \sum_{m=1}^\infty  \left(-\sum_{i=1}^nc_i^m\right) t^{m-1} \\
&= \sum_{m=1}^\infty  -\Tr(A^m) t^{m-1}. \qedhere
\end{align*}

\end{proof}

Let $C_{\ast}$ in $\Ch_{\geq 0}(\Ab_{\Aone}(k))$ be a bounded complex such that for $n>0$, we have $C_n \cong (\underline{K}^{MW}_n)^{b_n}$ for some nonnegative integers $b_n$ and $C_0 \cong  \ZZ^{b_{0}}$. For example, we may take $C_* = C^{\cell}_*(X)$ for a smooth simple cellular scheme $X$  (Remark~\ref{rmk:finite_cellular_schemes_Ccell_KMWbi}). Let $N$ be an integer. Let $t$ be a nonnegative integer such that $C_n = 0$ for $n \geq t$. Define 
\[ \Dual(C_{\ast}, N) \colonequals [\underline{\Hom}_{\Ch_{\geq 0}(\Ab(k))}(C_{\ast},\underline{K}^{MW}_{t}[t]),-t-N]. \]  
By Lemma~\ref{HomAbA1_KMWnmn+m}, $\Dual(C_{\ast}, N)$ is a representative for an object in $D^{\SW}(\Ab^p_{\Aone}(k))$.

\begin{pr}\label{P:dualinDSWAbAonek}
The object $\Dual(C_{\ast}, N)$ is a dual object to $(C_\ast, N)$ in $D^{\SW}(\Ab^p_{\Aone}(k))$ in the sense of Definition~\ref{df:dualizable_object_symmetric_monoidal_cat}. 
\end{pr}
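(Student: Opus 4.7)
The plan is to exhibit explicit evaluation and coevaluation maps and verify the two triangle identities of Definition~\ref{df:dualizable_object_symmetric_monoidal_cat}. For the evaluation, I would use the natural hom-tensor pairing: the standard evaluation chain map
\[
\underline{\Hom}_{\Ch_{\geq 0}(\Ab(k))}(C_{\ast},\underline{K}^{\MW}_{t}[t]) \otimes C_{\ast} \to \underline{K}^{\MW}_{t}[t]
\]
together with the identification $(\underline{K}^{\MW}_t[t], -t) \simeq \mathbf{1}$ in $D^{\SW}(\Ab^p_{\Aone}(k))$ coming from~\eqref{DpSWAone_KMW1invert} yields the required map $\epsilon \colon \Dual(C_*,N)\otimes (C_*,N) \to \mathbf{1}$ after observing that the Spanier--Whitehead shifts $-t-N$ and $N$ combine to $-t$.

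For the coevaluation, I would exploit the hypothesis that in each degree $n > 0$ the term $C_n \cong (\underline{K}^{\MW}_n)^{b_n}$ is free over $\underline{K}^{\MW}_n$ (and $C_0 \cong \mathbb{Z}^{b_0}$). Choosing bases $\{e_{n,i}\}_{i=1}^{b_n}$ for each $C_n$, Lemma~\ref{HomAbA1_KMWnmn+m} produces a dual basis $\{e_{n,i}^*\}$ for $\underline{\Hom}(C_n,\underline{K}^{\MW}_t) \cong (\underline{K}^{\MW}_{t-n})^{b_n}$. I would then define a chain map $\underline{K}^{\MW}_t[t] \to C_* \otimes \underline{\Hom}(C_*,\underline{K}^{\MW}_t[t])$ whose value in total degree $t$ sends the canonical generator to
\[
\sum_{n=0}^{t} \sum_{i=1}^{b_n} (-1)^{\sigma(n)}\, e_{n,i} \otimes e_{n,i}^*
\]
for an appropriate sign $\sigma(n)$ dictated by the Koszul rule governing the total differential on a tensor product of chain complexes. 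After shifting by $-t$ in the Spanier--Whitehead direction, this gives the coevaluation $\eta \colon \mathbf{1} \to (C_*,N) \otimes \Dual(C_*,N)$.

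To verify the triangle identities, I would check them termwise: because the element $\sum_{n,i} e_{n,i}\otimes e_{n,i}^*$ is defined by dual bases, each composite $(1\otimes \epsilon)\circ(\eta \otimes 1)$ and $(\epsilon\otimes 1)\circ(1\otimes \eta)$ collapses, in each degree, to the identity on $(\underline{K}^{\MW}_n)^{b_n}$ (or on $\mathbb{Z}^{b_0}$), by the standard duality of free modules. Assembling these degreewise identities gives the required equalities in $D^{\SW}(\Ab^p_{\Aone}(k))$, which are morphisms of bounded complexes of projective objects and hence specified by homotopy classes of chain maps.

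The main obstacle will be signs and shifts. Concretely, I need to verify that the coevaluation is actually a chain map, which amounts to showing that the canonical element $\sum_{n,i} e_{n,i}\otimes e_{n,i}^*$ is annihilated by the total differential $d_{C}\otimes \id \pm \id \otimes d_C^\vee$ on $C_* \otimes \underline{\Hom}(C_*,\underline{K}^{\MW}_t[t])$. This is the usual calculation that the identity element of an endomorphism complex is a cycle, and it comes down to the defining property of the dual differential, but care is needed because the normalization of $d_C^\vee$ involves Koszul signs depending on $t$ and on the degrees of the source and target. Once these signs are set consistently with the symmetry convention~\eqref{eq:tau_DAbAone}, both the chain map property and the triangle identities will follow without further incident.
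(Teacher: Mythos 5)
Your overall strategy is the same as the paper's: take $\epsilon$ to be the hom--tensor evaluation pairing, and build $\eta$ by sending the generator of $\underline{K}^{\MW}_t[t]$ to the ``identity element'' of $\underline{\Hom}(C_*,C_*)$, reinterpreted through the isomorphisms $\underline{\Hom}(\underline{K}^{\MW}_n,\underline{K}^{\MW}_t)\cong\underline{K}^{\MW}_{t-n}$ coming from Lemma~\ref{HomAbA1_KMWnmn+m}. The paper does this without choosing bases, via the maps it calls $h_n$, $f_n$, $g_n$; you phrase it with ``bases'' $\{e_{n,i}\}$ and ``dual bases'', which is fine in spirit but a little loose since the $C_n$ are sheaves of $\underline{K}^{\MW}_n$-flavor rather than free modules over a ring.

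The genuine gap is in your treatment of the ``signs.'' You write the coefficient of $e_{n,i}\otimes e_{n,i}^*$ as $(-1)^{\sigma(n)}$ and describe the bookkeeping as ordinary Koszul signs. But in $D(\Ab^p_{\Aone}(k))$ the relevant swap on $\underline{K}^{\MW}_i\otimes_{\Aone}\underline{K}^{\MW}_j$ is multiplication by $(-\lra{-1})^{ij}$ (graded $(-\lra{-1})$-commutativity of Milnor--Witt $K$-theory, \cite[Corollary~3.8]{A1-alg-top}), not by $(-1)^{ij}$. The paper's coevaluation is built precisely by inserting two such swaps, $\tau(n,t)$ and $\tau(n,t-n)$, contributing $(-\lra{-1})^{nt}$ and $(-\lra{-1})^{n(t-n)}$; combined with the $(-1)^n$ from the chain-level symmetry $\tau_{D(\Ab^p_{\Aone}(k))}$ in \eqref{eq:tau_DAbAone}, this is what produces $\lra{-1}^n$ rather than $(-1)^n$ in the trace formula of Proposition~\ref{pr:TraceDperf}. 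Since $\lra{-1}\neq\pm1$ in $\GW(k)$ in general (e.g.\ over $\F_q$ with $q\equiv 3\pmod 4$, or over $\R$), an $\eta$ decorated only with $\pm1$ signs is not the correct coevaluation: either the triangle identities will fail, or you will land on a dual for which the trace comes out with the wrong (un-enriched) sign $(-1)^n$. You need to recognize that the ``dual basis'' identification $\underline{\Hom}(C_n,\underline{K}^{\MW}_t)\otimes_{\Aone}C_n\cong \underline{\Hom}(C_n,\underline{K}^{\MW}_t\otimes_{\Aone}C_n)$ and the reordering to $C_n\otimes_{\Aone}\underline{\Hom}(C_n,\underline{K}^{\MW}_t)$ each carry a $(-\lra{-1})$-power, and carry these through explicitly. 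With that correction your argument would align with the paper's.
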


\begin{proof}
Define $ \epsilon\colon  \Dual(C_\ast, N)  \otimes (C_\ast, N)\rightarrow (\ZZ,0)$ to be the map in $D^{\SW}(\Ab^p_{\Aone}(k))$ associated to the natural evaluation map 
 \[
 \epsilon[t]: \underline{\Hom}_{\Ab(k)}(C_{\ast},\underline{K}^{MW}_{t}[t]) \otimes C_\ast \to \underline{K}^{MW}_{t}[t] \cong \underline{K}^{MW}_{1}[1]^{\otimes t}
 \] in $\Ch_{\geq 0}(\Ab(k))$.

Define a map $ \underline{\Hom}_{\Ab(k)}(C_n,C_n) \otimes \underline{K}^{\MW}_t \to \underline{\Hom}_{\Ab(k)}(C_n,C_n \otimes_{\Aone} \underline{K}^{\MW}_t)$ by $\phi \otimes a \mapsto \phi_a$ where $\phi_a(c) = \phi(c) \otimes a$. Since $C_n \cong (\underline{K}^{MW}_n)^{b_n}$ or $n=0$ and $C_0 \cong \ZZ^{b_0}$, $\underline{K}^{MW}_n \otimes_{\Aone} \underline{K}^{\MW}_t \cong \underline{K}^{\MW}_{n+t}$ \cite[Theorem 3.37]{A1-alg-top}, and $\underline{\Hom}(\underline{K}^{\MW}_{n},\underline{K}^{\MW}_{n+t}) \cong \underline{K}^{\MW}_{t}$  (Lemma~\ref{HomAbA1_KMWnmn+m}). Thus $\underline{\Hom}_{\Ab(k)}(C_n,C_n \otimes_{\Aone} \underline{K}^{\MW}_t)$ is strictly $\Aone$-invariant. (Note that $t-n \geq 1$ by construction.) This defines a map 
\[
f_n: \underline{\Hom}_{\Ab(k)}(C_n,C_n) \otimes_{\Aone} \underline{K}^{\MW}_t \to \underline{\Hom}_{\Ab(k)}(C_n,C_n \otimes_{\Aone} \underline{K}^{\MW}_t)
\] 
 By similar reasoning, we have a map
 \[
 g_n: \underline{\Hom}_{\Ab(k)}(C_n, \underline{K}^{\MW}_t) \otimes_{\Aone} C_n  \to \underline{\Hom}_{\Ab(k)}(C_n,\underline{K}^{\MW}_t \otimes_{\Aone} C_n )
\] Using the isomorphisms $C_n \cong (\underline{K}^{MW}_n)^{b_n}$ with $n>0$ or $\ZZ^{b_0}$, $\underline{\Hom}(\underline{K}^{\MW}_{n},\underline{K}^{\MW}_{t}) \cong \underline{K}^{\MW}_{t-n}$, and $\underline{K}^{MW}_{t-n} \otimes_{\Aone} \underline{K}^{\MW}_n \cong \underline{K}^{\MW}_{t}$ (note that $t-n>0$ for us), we see that the map $g$ is an isomorphism. 
Let $$h_n:\underline{K}^{MW}_t \rightarrow  \underline{\Hom}_{\Ab(k)}(C_n,C_n) \otimes \underline{K}^{\MW}_t$$  denote the map defined by $a \mapsto 1_{C_n} \otimes a$. 

The composite $\tau(n,t-n) \circ g_n^{-1}\circ \tau(n,t) \circ f_n \circ h_n$ defines a map
\begin{align*}
\underline{K}^{MW}_t &\rightarrow  \underline{\Hom}_{\Ab(k)}(C_n,C_n) \otimes \underline{K}^{\MW}_t \\
&\to  \underline{\Hom}_{\Ab(k)}(C_n,C_n \otimes_{\Aone} \underline{K}^{\MW}_t) \stackrel{ (- \lra{-1})^{nt}}{\xrightarrow{\hspace*{2cm}}}  \underline{\Hom}_{\Ab(k)}(C_n, \underline{K}^{\MW}_t \otimes_{\Aone}  C_n)\\ 
&\to  \underline{\Hom}_{\Ab(k)}(C_n, \underline{K}^{\MW}_t) \otimes_{\Aone} C_n \stackrel{(- \lra{-1})^{n(t-n)}}{\xrightarrow{\hspace*{2cm}}} C_n \otimes_{\Aone}  \underline{\Hom}_{\Ab(k)}(C_n, \underline{K}^{\MW}_t) \end{align*} where $\tau(i,j)$ is the swap $a \otimes b \mapsto (- \lra{-1})^{ij} b \otimes a$. The sign comes from the graded $(- \lra{-1})$-commutativity of $ \underline{K}^{\MW}_\ast$. See \cite[Corollary 3.8]{A1-alg-top}.

Taking the product of the $\tau(n,t-n) \circ g_n^{-1}\circ \tau(n,t) \circ f_n \circ h_n$ over $n$ defines a map 
\[
 \eta[t]: \underline{K}^{MW}_{t}[t] \to C_\ast  \otimes_{\Ch(\Ab_{\Aone}(k))}   \underline{\Hom}(C_{\ast},\underline{K}^{MW}_{t}[t]).
 \] Let $\eta\colon (\ZZ,0) \rightarrow  (C_\ast, N) \otimes \Dual(C_\ast, N)$ be the associated map in $D^{\SW}(\Ab^p_{\Aone}(k))$. We leave checking that these maps satisfy the desired properties as in Definition~\ref{df:dualizable_object_symmetric_monoidal_cat} to the reader. \end{proof}

Let $C_{\ast}$ in $\Ch_{\geq 0}(\Ab_{\Aone}(k))$ be a bounded complex such that $C_i \cong (\underline{K}^{MW}_i)^{b_i}$ for   $i>0$ and $C_0 \cong \ZZ^{b_0}$ for some nonnegative integers $b_i$.
Let $\varphi: C_{\ast} \to C_{\ast}$ be a morphism in $D(\Ab_{\Aone}(k))$. As in the proof of Corollary~\ref{f:map_pos_cellular_to_map_cCell}, the morphism $\varphi$ is represented by a map of complexes, so we have maps $\varphi_i: (\underline{K}^{\MW}_i)^{b_i} \to (\underline{K}^{\MW}_i)^{b_i}$. Such a $\varphi_i$ is determined by an $b_i \times b_i$ square matrix of elements of $\Hom(\underline{K}^{\MW}_i,\underline{K}^{\MW}_i) \cong \GW(k)$ for $i>0$ (Lemma~\ref{HomAbA1_KMWnmn+m}) or of elements of $\ZZ$ for $i=0$. The trace of such a matrix is denoted $\Tr$ and defined to be the sum of the diagonal entries and is viewed as an element of $\GW(k)$ for all $i$.

\begin{pr}\label{pr:TraceDperf}
Let $C_{\ast}$ in $\Ch_{\geq 0}(\Ab_{\Aone}(k))$ be a bounded complex such that $C_n \cong (\underline{K}^{MW}_n)^{b_n}$ for $n>0$ and $C_0 \cong \ZZ^{b_0}$ for some nonnegative integers $b_n$.
Let $\varphi: C_{\ast} \to C_{\ast}$ be a morphism in $D(\Ab_{\Aone}(k))$. The categorical trace of the corresponding map $\varphi$ in \(\DpSWAone\) is computed as
\[\Tr(\varphi) = \sum_{i}  \langle -1 \rangle^i \Tr(\varphi_i).
\]
\end{pr}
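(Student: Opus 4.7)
The plan is to compute the categorical trace directly using the explicit dual and (co)evaluation maps furnished by the proof of Proposition~\ref{P:dualinDSWAbAonek}. By definition, the trace of $\varphi$ in $\DpSWAone$ is the composite
\[
(\ZZ,0) \xrightarrow{\eta} (C_\ast,N)\otimes \Dual(C_\ast,N) \xrightarrow{\varphi\otimes\id} (C_\ast,N)\otimes \Dual(C_\ast,N) \xrightarrow{\tau} \Dual(C_\ast,N)\otimes (C_\ast,N) \xrightarrow{\epsilon} (\ZZ,0),
\]
viewed as an element of $\End(1_{\DpSWAone}) \cong \GW(k)$ via Proposition~\ref{L:changeofcategory}. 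I would fix an even integer $t$ large enough that $\Dual(C_\ast,N)$ is represented by $[\underline{\Hom}(C_\ast,\underline{K}^{\MW}_t[t]),-t-N]$ and unwind the composite at the level of chain complexes.

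First I would observe that the composite decomposes as a sum of contributions indexed by the degrees $n$ of $C_\ast$, because $\epsilon[t]$ pairs the degree-$n$ summand $C_n$ of $C_\ast$ with the degree-$(t-n)$ summand $\underline{\Hom}(C_n,\underline{K}^{\MW}_t)$ of the dual complex; in particular, off-diagonal parts of $\varphi$ do not contribute. By bilinearity I would reduce to the case where $\varphi$ acts as a scalar on a single basis summand of $C_n$ for a fixed $n$, so that the expected degree-$n$ contribution to the trace is $\langle -1\rangle^n\Tr(\varphi_n)$.

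The main calculation is tracking three sources of signs along the $n$-th strand. The $n$-th component of $\eta[t]$ is, by the proof of Proposition~\ref{P:dualinDSWAbAonek}, the composite $\tau(n,t-n)\circ g_n^{-1}\circ \tau(n,t)\circ f_n\circ h_n$, contributing the total sign $(-\langle -1\rangle)^{nt+n(t-n)}$ arising from the $-\langle -1\rangle$-graded commutativity of Milnor--Witt K-theory (\cite[Corollary 3.8]{A1-alg-top}). The symmetry $\tau$ on the degree-$n$ and degree-$(t-n)$ factors of $C_\ast \otimes \Dual(C_\ast,N)$ contributes the Koszul sign $(-1)^{n(t-n)}$ from the definition \eqref{eq:tau_DAbAone} of the symmetry on $D(\Ab^p_{\Aone}(k))$. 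Finally, $\epsilon[t]$ is sign-free, and when composed with $h_n$ appearing at the start of $\eta$, it produces the matrix trace $\Tr(\varphi_n)\in\GW(k)$ via the identification $\End(\underline{K}^{\MW}_n)\cong\GW(k)$ of Lemma~\ref{HomAbA1_KMWnmn+m} for $n>0$ (and $\End(\ZZ)=\ZZ\subset\GW(k)$ for $n=0$).

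Combining the three signs and using $\langle -1\rangle^2=1$ together with $n^2\equiv n\pmod{2}$, the total sign simplifies to
\[
(-\langle -1\rangle)^{nt+n(t-n)}\,(-1)^{n(t-n)} = (-1)^{nt}\langle -1\rangle^n,
\]
and the choice of $t$ even eliminates the factor $(-1)^{nt}$, leaving $\langle -1\rangle^n$. Summing over $n$ then yields the claimed formula $\Tr(\varphi)=\sum_i\langle -1\rangle^i\Tr(\varphi_i)$. I expect the main obstacle to be the careful bookkeeping of these three sign sources while verifying that the final answer is independent of the auxiliary integer $t$ used to model the dual, which is the whole reason the parity argument above (choosing $t$ even) is admissible.
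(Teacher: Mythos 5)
Your proposal is correct and follows essentially the same route as the paper's proof: fix a large $t$, unwind the composite $\epsilon \circ \tau \circ (\varphi \otimes \id) \circ \eta$ at the level of chain complexes using the explicit duality data of Proposition~\ref{P:dualinDSWAbAonek}, observe it is concentrated in degree $t$ and decomposes as a sum over $n$, and track the signs from $\eta$, from the symmetry $\tau$, and from the matrix trace of $\varphi_n$.

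One place where you are actually \emph{more} careful than the paper: by the definition \eqref{eq:tau_DAbAone}, the Koszul sign for swapping the degree-$n$ factor $C_n$ past the degree-$(t-n)$ factor $\underline{\Hom}(C_n,\underline{K}^{\MW}_t)$ is $(-1)^{n(t-n)}$, as you write. The paper's proof records this factor as $(-1)^n$, which agrees with $(-1)^{n(t-n)}$ only when $nt$ is even. Combining all the signs, you correctly obtain $(-1)^{nt}\langle -1\rangle^n$, so the displayed identity becomes transparent precisely when $t$ is chosen even; since one may always pick $t$ even (any $t$ with $C_n=0$ for $n\geq t$ works), this is harmless, but your explicit flagging of the parity condition on $t$ tidies up a small piece of sign bookkeeping that the paper passes over. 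The rest of your argument --- the reduction by bilinearity to a scalar acting on a single summand, the observation that off-diagonal entries of the matrix $\varphi_n$ do not contribute, and the identification $\End(\underline{K}^{\MW}_n)\cong\GW(k)$ via Lemma~\ref{HomAbA1_KMWnmn+m} --- matches the paper.
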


\begin{proof}
Let $\epsilon,\eta$ be the evaluation and coevaluation maps as in Proposition~\ref{P:dualinDSWAbAonek}, and $\tau$ the symmetry isomorphism in the category $\DpSWAone$ as in \eqref{eq:tau_DAbAone}. Then $\Tr(\varphi)$ is the composition 
\[
1 \stackrel{\eta}{\longrightarrow} (C_*,N) \otimes \Dual (C_*,N) \stackrel{\varphi \otimes 1}{\longrightarrow} (C_*,N) \otimes \Dual (C_*,N) \stackrel{\tau}{\to}  \Dual (C_*,N) \otimes (C_*,N) \stackrel{\epsilon}{\longrightarrow} 1.
\] This map is represented by a map
\begin{align*}
\underline{K}^{MW}_{t}[t] \to C_\ast   \otimes_{\Ch(\Ab_{\Aone}(k))}  \underline{\Hom}(C_{\ast},\underline{K}^{MW}_{t}[t])\stackrel{\varphi \otimes 1}{\longrightarrow} C_\ast   \otimes_{\Ch(\Ab_{\Aone}(k))}  \underline{\Hom}(C_{\ast},\underline{K}^{MW}_{t}[t]) \\ \to  \underline{\Hom}(C_{\ast},\underline{K}^{MW}_{t}[t]) \otimes_{\Ch(\Ab_{\Aone}(k))} C_\ast  \to \underline{K}^{MW}_{t}[t] 
\end{align*} in $\Ch_{\geq 0}(\Ab_{\Aone}(k))$, where $t$ is as in Proposition~\ref{P:dualinDSWAbAonek} and its proof. This map is concentrated in degree $t$. The degree $t$ sheaves of the complexes $ C_\ast   \otimes_{\Ch(\Ab_{\Aone}(k))}  \underline{\Hom}(C_{\ast},\underline{K}^{MW}_{t}[t])$ and $ \underline{\Hom}(C_{\ast},\underline{K}^{MW}_{t}[t]) \otimes_{\Ch(\Ab_{\Aone}(k))} C_\ast$ are isomorphic to a direct sum over $n$ of $C_n \otimes_{\Aone}  \underline{\Hom}(C_n,\underline{K}^{MW}_{t})$, leading to an expression for  $\Tr(\varphi)$ as the sum over $n$ of maps
\begin{align}
\label{eqn:Trvarphi_n}
\underline{K}^{MW}_{t} \to C_n \otimes_{\Aone}  \underline{\Hom}_{\Ab(k)}(C_n, \underline{K}^{\MW}_t)  \stackrel{\varphi \otimes 1}{\longrightarrow} C_n \otimes_{\Aone}  \underline{\Hom}_{\Ab(k)}(C_n, \underline{K}^{\MW}_t) \\
\stackrel{(-1)^n}{\longrightarrow} \underline{\Hom}_{\Ab(k)}(C_n, \underline{K}^{\MW}_t) \otimes C_n \to \underline{K}^{MW}_{t}.  \nonumber
\end{align}
Tracing through the definitions of Proposition~\ref{P:dualinDSWAbAonek}, the composite \eqref{eqn:Trvarphi_n} is \[
(-\lra{-1})^{tn}(-\lra{-1})^{(t-n)n}(-1)^n \Tr(\varphi_n)=\lra{-1}^n \Tr(\varphi_n).
\]
Thus $\Tr(\varphi) = \sum_{n}  \langle -1 \rangle^n \Tr(\varphi_n)$ as claimed.
\end{proof}

\subsection{Cellular Grothendieck--Lefschetz trace formula}\label{subsection:LTFcellular}

Hoyois \cite{hoyois2015quadratic} proves a quadratic refinement of the Grothendieck--Lefschetz trace formula in the setting of stable motivic homotopy theory, in the sense of relating the trace of an endomorphism $\varphi: X \to X$ of a smooth proper scheme to the fixed points of $\varphi$. The machinery of Morel--Sawant and the above give an expression for the trace in terms of traces of matrices of elements of $\GW(k)$ for simple cellular $X$:

\begin{tm}\label{tm:Cellular_LTF}
Let $X$ be a smooth projective scheme over a field $k$ and suppose that $X$ admits a simple cellular structure. Let \(\varphi \colon X \to X\) be an endomorphism, and let $C_*^{\cell}(\varphi): C_*^{\cell}(X) \to C_*^{\cell}(X) $ be any representative of the canonical chain homotopy class. We have the equality
\[
\Tr(\varphi) =  \sum_i \langle -1 \rangle^i \Tr(C_i^{\cell}(\varphi))
\] in $\GW(k)$.
\end{tm}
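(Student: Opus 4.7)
The plan is to assemble the trace formula directly from the machinery built up in Sections~\ref{sec:cellularhomology} and~\ref{sec:SWcellcomplex}, using that nearly all of the work has been done: we need to (i) push $\Tr(\varphi)$ through the symmetric monoidal functor $\CcellSW$, and (ii) compute the resulting categorical trace in $\DpSWAone$ by the explicit chain-level formula.

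First, by Lemma~\ref{Xsmproj_is_dualizable_SW}, $X_+$ is dualizable in $\cat{\SW}^{\cell}(k)$, so the categorical trace $\Tr(\varphi) \in \End(1_{\cat{\SW}^{\cell}(k)})$ is defined, and Proposition~\ref{L:changeofcategory} identifies $\End(1_{\cat{\SW}^{\cell}(k)}) \cong \GW(k) \cong \End(1_{\DpSWAone})$ in a way compatible with $\CcellSW$. Thus it suffices to prove the equality after applying $\CcellSW$. Corollary~\ref{cor:trace_commutes_with_Ccell} (an instance of Proposition~\ref{HrespectDtr} applied to the symmetric monoidal functor $\CcellSW$ of Proposition~\ref{Ccell_symmetric_monoidal}) gives
\[
\CcellSW(\Tr(\varphi)) \;=\; \Tr(\CcellSW(\varphi)) \quad \text{in } \End(1_{\DpSWAone}).
\]

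Second, I will compute the right hand side explicitly. Because $X$ has a \emph{simple} cellular structure, Remark~\ref{rmk:finite_cellular_schemes_Ccell_KMWbi} guarantees that the complex $C^{\cell}_*(X) \simeq \tilde{C}^{\cell}_*(X_+)$ satisfies $C^{\cell}_i(X) \cong (\underline{K}^{\MW}_i)^{b_i}$ for $i > 0$ and $C^{\cell}_0(X) \cong \ZZ^{b_0}$, which is precisely the hypothesis needed to apply Proposition~\ref{pr:TraceDperf}. That proposition, whose proof hinges on the explicit description of the dual object $\Dual(C_*,N)$ in Proposition~\ref{P:dualinDSWAbAonek} and a careful bookkeeping of the signs $(-\langle -1\rangle)^{ij}$ coming from the graded commutativity of $\underline{K}^{\MW}_*$, yields
\[
\Tr(\CcellSW(\varphi)) \;=\; \sum_i \langle -1 \rangle^i \Tr(C^{\cell}_i(\varphi))
\]
where $C^{\cell}_i(\varphi)$ is represented as a square matrix over $\GW(k)$ (or over $\ZZ \subset \GW(k)$ when $i=0$) via the identification $\underline{\Hom}(\underline{K}^{\MW}_i,\underline{K}^{\MW}_i) \cong \GW(k)$ of Lemma~\ref{HomAbA1_KMWnmn+m}. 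Combining the two displays and the identification of Proposition~\ref{L:changeofcategory} gives the theorem.

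The only subtle point is the well-definedness of the right hand side: $C^{\cell}_*(\varphi)$ is only defined up to chain homotopy (Corollary~\ref{f:map_pos_cellular_to_map_cCell}), so one must check that the individual matrix traces $\Tr(C^{\cell}_i(\varphi))$ depend only on the chain homotopy class when assembled with the signs $\langle -1\rangle^i$. This is standard: if $C^{\cell}_i(\varphi) - C^{\cell}_i(\varphi') = d\circ h_i + h_{i-1} \circ d$ for a chain homotopy $h$, then passing to the matrix descriptions over $\GW(k)$ and using the graded cyclicity $\Tr(d h_i) = -\langle -1\rangle \Tr(h_i d)$ induced by the Koszul sign in the derived category yields the cancellation $\sum_i \langle -1\rangle^i \Tr(dh_i + h_{i-1}d) = 0$. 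This is the main (minor) thing to verify carefully; the substantive content of the theorem is already contained in Propositions~\ref{Ccell_symmetric_monoidal},~\ref{L:changeofcategory}, and~\ref{pr:TraceDperf}, so the proof itself will be very short.
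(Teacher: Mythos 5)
Your proof is correct and follows essentially the same path as the paper's: dualizability from Lemma~\ref{Xsmproj_is_dualizable_SW}, the isomorphism of endomorphism rings via Proposition~\ref{L:changeofcategory}, compatibility of trace with $\CcellSW$ from Corollary~\ref{cor:trace_commutes_with_Ccell}, and the explicit chain-level computation from Proposition~\ref{pr:TraceDperf} applied via Remark~\ref{rmk:finite_cellular_schemes_Ccell_KMWbi}. The only addition is your explicit verification of independence of chain-homotopy representative; this is a correct and welcome sanity check, but it is already guaranteed by the categorical formalism, since the trace $\Tr(\CcellSW_*(\varphi))$ is computed in $\End(1_{\DpSWAone})$, where chain-homotopic maps are equal, and Proposition~\ref{pr:TraceDperf} computes that trace for any chain-level representative.
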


Note that with our notational conventions, $C_*^{\cell}(X) \simeq \tilde{C}^{\cell}_*(X_+) \simeq \CcellSW_*(X)$.

\begin{proof}
By Lemma~\ref{Xsmproj_is_dualizable_SW}, the scheme $X$ is fully dualizable and $\Tr(\varphi)$ is a well-defined element of $\GW(k)$. By Proposition~\ref{L:changeofcategory}, there is an equality  $\Tr(\varphi) = \CcellSW_* (\Tr(\varphi))$. By Corollary~\ref{cor:trace_commutes_with_Ccell}, $ \CcellSW_* (\Tr(\varphi)) = \Tr(\CcellSW_*(\varphi))$. By Remark~\ref{rmk:finite_cellular_schemes_Ccell_KMWbi}, $\CcellSW_*(X) \simeq C^{\cell}_*(X)$ satisfies the hypotheses of Proposition~\ref{pr:TraceDperf}. Applying Proposition~\ref{pr:TraceDperf} proves the theorem.
\end{proof}

Classical Lefschetz trace formulas in algebraic or topological categories take the form
\[
\sum_i (-1)^i \Tr(\varphi^*\vert_{H^i(X)}) = \Tr(\varphi) = \sum_{x: \varphi(x) = x} \ind_x \varphi
\] under appropriate hypotheses. For simple cellular schemes,  Theorem~\ref{tm:Cellular_LTF} gives a quadratic refinement of the left equality. We will combine this with Hoyois's enrichment of the right equality later in Section \ref{S:Hoyois-trace-formula}.

\section{Rationality of the zeta function} \label{sec:rationality}

In this section, we prove that the logarithmic zeta function of an endomorphism of a scheme with a cellular structure is computed via \(\CcellSW_*(X)\) (Theorem~\ref{thm:rationality_cellular_slick}).  When the cellular structure is moreover simple, we more explicitly show that the logarithmic zeta function is computed by the action on the terms \(C_i^\cell(X)\) of Morel--Sawant's cellular complex (Theorem~\ref{thm:rationality_cellular}).  This, in turn, proves that \(\dlog \zeta^{\Aone}_{X, \varphi}\) is dlog rational (cf.~Definition~\ref{df:dlog_rational}) in this case.

 Let \(X\) be a cellular scheme over a field \(k\) (cf.~Definition~\ref{df:finite_cellular_structure}) with an endomorphism \(\varphi \colon X \to X\). Since \(\CcellSW_*\) is a functor, we obtain an endomorphism
\[ \CcellSW_*(\varphi) \colon \CcellSW_*(X) \to \CcellSW_*(X). \]
Using Definition~\ref{df:log_derivative_char_poly_varphi}, we define
\[ \dlog \PcellSW_{X, \varphi}(t) = \dlog P_{\CcellSW_*(\varphi)}(t) = \sum_{m=1}^\infty - \Tr(\CcellSW_*(\varphi)^m)t^{m-1} \]
to be the logarithmic characteristic polynomial of the endomorphism \(\CcellSW_*(\varphi)\).

\begin{tm}\label{thm:rationality_cellular_slick}
Let \(X\) be a smooth projective cellular scheme over a field $k$ and let \(\varphi \colon X \to X\) be an endomorphism.  Then
\[\dlog \zeta^{\Aone}_{X, \varphi}(t)  =  -\dlog \PcellSW_{X, \varphi}(t).\]
\end{tm}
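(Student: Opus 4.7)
The plan is to show that for each $m \geq 1$, the coefficient of $t^{m-1}$ on both sides agrees, i.e.\ that
\[ \Tr(\varphi^m) = \Tr\bigl(\CcellSW_*(\varphi)^m\bigr) \in \GW(k). \]
Once this is established, summing over $m \geq 1$ and comparing with Definitions~\ref{df:dlogA1zeta} and~\ref{df:log_derivative_char_poly_varphi} immediately yields
\[ \dlog \zeta^{\Aone}_{X,\varphi}(t) = \sum_{m \geq 1} \Tr(\varphi^m) t^{m-1} = -\sum_{m \geq 1} -\Tr\bigl(\CcellSW_*(\varphi)^m\bigr) t^{m-1} = -\dlog \PcellSW_{X,\varphi}(t). \]

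First, by Lemma~\ref{Xsmproj_is_dualizable_SW}, $X_+$ is dualizable in $\cat{\SW}^{\cell}(k)$, so $\Tr(\varphi^m)$ is a well-defined element of $\End(1_{\cat{\SW}^{\cell}(k)})$, which by Proposition~\ref{L:changeofcategory} is canonically identified with $\GW(k)$. Next, I invoke the symmetric monoidal functor $\CcellSW \colon \cat{\SW}^{\cell}(k) \to \DpSWAone$ of Definition~\ref{df:CcellSW} and apply Corollary~\ref{cor:trace_commutes_with_Ccell} (which is itself an instance of Proposition~\ref{HrespectDtr} that traces are preserved by symmetric monoidal functors applied to dualizable objects): this gives the equality
\[ \CcellSW\bigl(\Tr(\varphi^m)\bigr) = \Tr\bigl(\CcellSW(\varphi^m)\bigr) = \Tr\bigl(\CcellSW(\varphi)^m\bigr), \]
where the last step is simply the functoriality of $\CcellSW$.

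Finally, by Proposition~\ref{L:changeofcategory}, the map $\CcellSW \colon \End(1_{\cat{\SW}^{\cell}(k)}) \to \End(1_{\DpSWAone})$ is an isomorphism and both sides are canonically identified with $\GW(k)$ via the identity. Combining these three observations, we obtain the desired identity $\Tr(\varphi^m) = \Tr(\CcellSW_*(\varphi)^m)$ in $\GW(k)$. Assembling the $t^{m-1}$ coefficients yields the claimed identity of power series in $\GW(k)[[t]]$.

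There is essentially no obstacle here beyond carefully unwinding the definitions: all the real work has been packaged into Proposition~\ref{HrespectDtr} (traces commute with symmetric monoidal functors), Proposition~\ref{L:changeofcategory} (the endomorphisms of the unit are preserved in passing to $\DpSWAone$), and the construction of $\CcellSW$ as a symmetric monoidal functor. The only point to be mindful of is keeping the sign convention straight: $\dlog \zeta^{\Aone}_{X,\varphi}$ is defined without the minus sign, while $\dlog P_\varphi$ carries one, which is precisely why the theorem has a minus sign on the right-hand side.
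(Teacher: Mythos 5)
Your proof is correct and matches the paper's argument exactly: both identify $\Tr(\varphi^m)$ with $\CcellSW_*(\Tr(\varphi^m))$ via Proposition~\ref{L:changeofcategory}, pass the trace through the symmetric monoidal functor via Corollary~\ref{cor:trace_commutes_with_Ccell}, use functoriality to move the $m$th power inside, and unwind the sign conventions in the two definitions. The only cosmetic difference is that you argue coefficient-by-coefficient whereas the paper presents a single chain of power-series equalities.
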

\begin{proof}
We compute
 \begin{align*}
  \dlog \zeta^{\mathbb{A}^1}_{X,\varphi} (t) &= \sum_{m \geq 1} \Tr(\varphi^m) t^{m-1}  & (\text{by definition})\\
  &= \sum_{m \geq 1} \CcellSW_*(\Tr(\varphi^m)) t^{m-1} & (\text{by proof of Proposition~\ref{L:changeofcategory}}) \\
  &= \sum_{m \geq 1} \Tr(\CcellSW_*(\varphi^m)) t^{m-1} & (\text{by Corollary~\ref{cor:trace_commutes_with_Ccell}}) \\
    &= \sum_{m \geq 1} \Tr(\CcellSW_*(\varphi)^m) t^{m-1} & (\text{by functoriality}) \\
    &= - \dlog \PcellSW_{X, \varphi}(t) & (\text{by definition}). &\qedhere
  \end{align*}
\end{proof}

We can make \(\dlog \PcellSW_{X, \varphi}(t)\) more explicit when the cellular structure is simple.  Assume this is the case. Let
\[0 \subsetneq \Omega_0(X) \subsetneq \cdots \subsetneq \Omega_{s}(X) = X\]
be a simple cellular structure on \(X\).  Write \(X_i \colonequals \Omega_{i}(X) \smallsetminus \Omega_{i-1}(X)\).  
Then
\[\CcellSW_i(X) \colonequals (C^\cell_i(X), 0) \quad \quad \text{ and } \quad \quad C^\cell_i(X)\cong \begin{cases}
(\underline{K}^{\MW}_i)^{b_i}   & i>0 \\
 \ZZ^{b_0} & i=0.
\end{cases} \] 
See Remark~\ref{rmk:finite_cellular_schemes_Ccell_KMWbi}. By Corollary~\ref{f:map_pos_cellular_to_map_cCell} or \cite[Corollary 2.43]{MorelSawant}, there is a canonical chain homotopy class of endomorphisms $C^{\cell}_*(\varphi)$ of $C^\cell_\ast(X)$. Choosing a representative, we obtain an endomorphism
\[C^\cell_i(\varphi) \colon C^\cell_i(X) \to C^\cell_i(X)\]
of each term in the complex \(C^\cell_*(X)\). 
Since \(C^\cell_i(X) \simeq (\underline{K}^{MW}_i)^{b_i}\), the endomorphism \(C^\cell_i(\varphi)\) is determined by a \(b_i \times b_i\) square matrix of entries in \(\Hom(\underline{K}^{MW}_i, \underline{K}^{MW}_i) \cong \GW(k)\), which is a commutative ring.  We therefore have the usual notion of the characteristic polynomial \(P_{C^\cell_i(\varphi)}(t)\) of \(C^\cell_i(\varphi)\) in the polynomial ring over \(\GW(k)\), namely 
\[
P_{C^\cell_i(\varphi)}(t) = \det (1- t C^\cell_i(\varphi)).
\]

\begin{tm}\label{thm:rationality_cellular}
Let \(X\) be a smooth projective simple cellular scheme over a field $k$ and let $\varphi: X \to X$ be an endomorphism. Then
\[\dlog \zeta^{\Aone}_{X, \varphi} = \sum_{i = - \infty}^\infty -\langle -1 \rangle^i \frac{d}{dt}\log P_{C^\cell_i(\varphi)}(t). \]
\end{tm}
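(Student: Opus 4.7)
The strategy is to chain together three ingredients already available in the paper: (i) Theorem~\ref{thm:rationality_cellular_slick}, which equates $\dlog \zeta^{\Aone}_{X,\varphi}$ with $-\dlog \PcellSW_{X,\varphi}(t)$; (ii) the Cellular Lefschetz Trace Formula (Theorem~\ref{tm:Cellular_LTF}), which expresses $\Tr(\varphi^m)$ as an alternating sum of matrix traces on the cellular complex; and (iii) the linear-algebra identity Lemma~\ref{lem:linear_algebra} applied in the commutative ring $\GW(k)$. Since the scheme is simple cellular, Remark~\ref{rmk:finite_cellular_schemes_Ccell_KMWbi} guarantees that each $C_i^\cell(X)$ is either a free $\underline{K}^{\MW}_i$-module or a free $\Z$-module of finite rank, so that after fixing a representative of the chain-homotopy class of $C_*^\cell(\varphi)$ we genuinely have a square matrix with entries in $\GW(k)$ (via $\Z \hookrightarrow \GW(k)$ in degree $0$ and via Lemma~\ref{HomAbA1_KMWnmn+m} in positive degrees).

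The plan is then as follows. First, expand the definition of $\dlog \zeta^{\Aone}_{X,\varphi}$ and apply Theorem~\ref{tm:Cellular_LTF} coefficient-by-coefficient to each $\Tr(\varphi^m)$:
\[
\dlog \zeta^{\Aone}_{X, \varphi}(t) \;=\; \sum_{m\geq 1} \Tr(\varphi^m)\, t^{m-1} \;=\; \sum_{m \geq 1}\sum_{i} \langle -1\rangle^i \Tr\bigl(C^\cell_i(\varphi^m)\bigr)\, t^{m-1}.
\]
Next, use functoriality of $C_*^\cell$ (Corollary~\ref{f:map_pos_cellular_to_map_cCell}) to identify $C^\cell_i(\varphi^m)$ with the $m$-fold power $C^\cell_i(\varphi)^m$ in the homotopy category; as in Theorem~\ref{tm:Cellular_LTF}, the trace of a homotopy-class representative on each term is well-defined because it appears only inside the invariant alternating sum $\sum_i \langle -1 \rangle^i \Tr(\cdot)$. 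Interchanging the order of summation, which is valid as a formal identity of power series in a finite sum over $i$, yields
\[
\dlog \zeta^{\Aone}_{X, \varphi}(t) \;=\; \sum_{i} \langle -1\rangle^i \sum_{m \geq 1} \Tr\bigl(C^\cell_i(\varphi)^m\bigr) \, t^{m-1}.
\]

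Finally, apply Lemma~\ref{lem:linear_algebra} with $R = \GW(k)$ and $A = C^\cell_i(\varphi)$ (noting that $\GW(k)$ is commutative, so that the characteristic polynomial $P_{C^\cell_i(\varphi)}(t) = \det(1 - t\,C^\cell_i(\varphi))$ is defined and the lemma applies verbatim): this gives $\sum_{m \geq 1} \Tr(C^\cell_i(\varphi)^m)\, t^{m-1} = -\frac{d}{dt}\log P_{C^\cell_i(\varphi)}(t)$. Substituting this into the previous display yields
\[
\dlog \zeta^{\Aone}_{X,\varphi}(t) \;=\; \sum_{i} -\langle -1\rangle^i \frac{d}{dt}\log P_{C^\cell_i(\varphi)}(t),
\]
as claimed; the sum is finite because the cellular complex is bounded.

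There is essentially no deep obstacle here: all the real work has been done in establishing the symmetric monoidal cellular complex, the Cellular Lefschetz Trace Formula, and Proposition~\ref{pr:TraceDperf}. The only mildly delicate point is bookkeeping: confirming that although $C^\cell_i(\varphi)$ is only well-defined up to chain homotopy, the matrix trace $\Tr(C^\cell_i(\varphi)^m)$ is independent of the chosen representative once weighted by $\langle -1\rangle^i$ and summed over $i$, which is precisely the content of Theorem~\ref{tm:Cellular_LTF} applied to the endomorphism $\varphi^m$. Alternatively, one can avoid this subtlety entirely by observing that Theorem~\ref{thm:rationality_cellular_slick} together with Proposition~\ref{pr:TraceDperf} (which computes the categorical trace in $\DpSWAone$ as precisely $\sum_i \langle -1\rangle^i \Tr(C^\cell_i(\varphi)^m)$) already packages this well-definedness, reducing the proof to the purely algebraic identity of Lemma~\ref{lem:linear_algebra}.
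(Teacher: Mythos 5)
Your proof is correct and follows essentially the same route as the paper: the paper chains Theorem~\ref{thm:rationality_cellular_slick}, Proposition~\ref{pr:TraceDperf}, Corollary~\ref{f:map_pos_cellular_to_map_cCell}, and Lemma~\ref{lem:linear_algebra}, while you invoke Theorem~\ref{tm:Cellular_LTF} (which is itself proved from Proposition~\ref{pr:TraceDperf} via the symmetric monoidal functor $\CcellSW_*$) together with the same functoriality and linear-algebra lemma. Your closing remark even spells out the paper's exact factorization as the alternative, so the two arguments differ only in whether the Cellular Lefschetz Trace Formula is quoted as a black box or re-derived inline.
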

\begin{proof}
 We have
 \begin{align*}
  \dlog \zeta^{\mathbb{A}^1}_{X,\varphi} (t)  &= \sum_{m \geq 1} \Tr(\CcellSW_*(\varphi^m)) t^{m-1} & (\text{by Theorem~\ref{thm:rationality_cellular_slick}}) \\
  &= \sum_{m \geq 1}\sum_{i = - \infty}^\infty  \langle -1 \rangle^i \Tr(\CcellSW_i(\varphi^m)) t^{m-1} & (\text{by Proposition~\ref{pr:TraceDperf}}) \\
    &= \sum_{m \geq 1}\sum_{i = - \infty}^\infty \langle -1 \rangle^i \Tr(C^\cell_i(\varphi^m)) t^{m-1} & (\text{by construction}) \\
    &= \sum_{m \geq 1}\sum_{i = - \infty}^\infty \langle -1 \rangle^i \Tr(C^\cell_i(\varphi)^m) t^{m-1} & (\text{by Corollary~\ref{f:map_pos_cellular_to_map_cCell}} )\\
    &= \sum_{i = - \infty}^\infty -\langle -1 \rangle^i \frac{d}{dt}\log P_{C^\cell_i(\varphi)}(t) & (\text{by Lemma~\ref{lem:linear_algebra}}). & \qedhere
 \end{align*}
\end{proof}

\begin{rmk} \label{rmk:dlogrationality}
If a power series $\Phi(t)$ is dlog rational in the sense of Definition \ref{df:dlog_rational}, it is tempting to write $\Phi = \dlog \prod_{j}P_j^{c_j}$, where the power operation should be a suitable power structure on the Grothendieck--Witt ring, compatible with logarithmic derivatives in the sense that for $r\in \GW(k)$ and $f\in 1 + t\GW(k)[[t]]$ we would have
$$\dlog f^{r} = r\dlog f.$$ 

In fact, it is not hard to check that such a power structure does not exist on $\GW(k)$ if $k^*$ has non-square elements. Indeed, if we could define $$ (1 + t)^r := 1 + a_1 t + a_2 t^2\ldots,$$
for every $r\in \GW(k)$, for appropriate coefficients $a_1,a_2,\ldots\in \GW(k)$ (which are functions of $r$), then we quickly see that, together with the log-derivative compatibility condition, this imposes the relations $a_1 = r$ and $2a_2 = r^2 - r$. 

When $r = \langle u\rangle$ for a non-square $u$, we have $r^2 -r = \langle u \rangle^2 - \langle u\rangle = 1 - \langle u\rangle$, so that the condition on $a_2$ implies that $2a_2 = 1-\langle u\rangle$. Since the discriminant of $2a_2$ is $1/u$, the discriminant of $a_2$ would give a square root of $u$ in $k$, which is a contradiction.

Formally inverting $2$ for $k$ a finite field yields $\GW(k)[\frac{1}{2}] \cong \Z[\frac{1}{2}]$, so we would obtain nothing more than the classical zeta function by taking this method. We instead use the logarithmic derivative (but see also Remark~\ref{rmk:dlog_rationality_R_again}). 
\end{rmk}

\section{$\Aone$-logarithmic zeta functions and real points}\label{Section:Rpoints}

 Let $r^{\R}: \SH(\R) \to \SH$ denote the real realization functor from the stable $\Aone$-homotopy category over $\R$ to the topological stable homotopy category \cite[Section 10]{bachmann-real-etale} . As above, we use the isomorphism $\End(1_{\SH(\R)}) \cong \GW(\R)$ given by the $\Aone$-degree of Morel. This $\Aone$-degree
 has the beautiful property (see e.g., \cite[Prop 3.1.3]{AsokFaselWilliams_Suslin-Herewicz}) that it encodes the topological degree of the corresponding map on real points: 
\begin{equation}\label{eq:sgn_deg}
\sgn \deg^{\Aone}(f) = \deg^{\Top} r^{\R}(f),
\end{equation}
where $\deg^{\Top}$ denotes the topological degree of a self map of a sphere, and $\sgn: \GW(\R) \to \Z$ is the signature homomorphism, defined $\sgn(a \lra{1} + b \lra{-1}) = a-b $ for all $a,b \in \Z$. Thus the map
\[
r^{\R}(\End_1) \colon \GW(\R) \cong \End(1_{\SH(\R)}) \rightarrow \End(1_\SH) \cong \Z
\] induced by the real realization on endomorphisms of the unit objects equals $\sgn: \GW(\R) \to \Z$.

\begin{lm}\label{sgnTr=Tr(R)}
Suppose that $\varphi: X \to X$ is an endomorphism of a smooth scheme $X$ over $\R$. Let $X(\R)$ denote the real points of $X$, viewed as a real manifold, and $\varphi(\R): X(\R) \to X(\R)$ the corresponding endomorphism. Then 
\[
\sgn \Tr (\varphi) = \Tr(\varphi(\R)).
\]
\end{lm}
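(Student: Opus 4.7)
The plan is to reduce everything to the symmetric monoidality of the real realization $r^{\R}\colon \SH(\R) \to \SH$ via Proposition~\ref{HrespectDtr}. Since $X$ is a smooth (hence implicitly dualizable) $\R$-scheme, the categorical trace $\Tr(\varphi) \in \End(1_{\SH(\R)}) \cong \GW(\R)$ is defined in the first place. Because $r^{\R}$ is symmetric monoidal, Proposition~\ref{HrespectDtr} immediately gives
\[
r^{\R}\bigl(\Tr(\varphi)\bigr) \;=\; \Tr\bigl(r^{\R}(\varphi)\bigr)
\]
as elements of $\End(1_{\SH}) \cong \Z$, and the whole argument becomes identifying the two sides of this equality with the two sides of the claimed formula.

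For the left-hand side, I would invoke the statement recalled in the paragraph immediately preceding the lemma: the restriction of $r^{\R}$ to $\End(1_{\SH(\R)}) \to \End(1_{\SH})$ is precisely the signature homomorphism $\sgn\colon \GW(\R) \to \Z$. Thus $r^{\R}(\Tr(\varphi)) = \sgn \Tr(\varphi)$.

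For the right-hand side, I would use the defining feature of the real realization: on a smooth $\R$-scheme $X$, one has a natural equivalence $r^{\R}(X_+) \simeq \Sigma^{\infty}_+ X(\R)$ in $\SH$, under which $r^{\R}(\varphi_+)$ corresponds to $\Sigma^{\infty}_+ \varphi(\R)$. This transports the categorical trace of $r^{\R}(\varphi)$ in $\SH$ to the trace of the endomorphism of $\Sigma^{\infty}_+ X(\R)$, which is exactly what is denoted $\Tr(\varphi(\R))$ in the statement (and, when $X(\R)$ is a compact manifold, agrees with the classical Lefschetz number via the topological Lefschetz trace formula).

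The two delicate points are (a) dualizability: for a general smooth scheme $X$ one needs to invoke the extensions of Riou--Ayoub dualizability (e.g.\ via \cite{RiouAppendix} or Dubouloz--D\'eglise--{\O}stv\ae r), noting that the characteristic exponent of $\R$ is $1$ so no localization is required; and (b) the identification $r^{\R}(X_+) \simeq \Sigma^{\infty}_+ X(\R)$ functorially in $\varphi$, which is a standard property of the real realization constructed in \cite{bachmann-real-etale}. Beyond citing these two inputs, the proof is a one-line application of the symmetric-monoidal trace formalism.
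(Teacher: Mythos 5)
Your proof is correct and follows the same route as the paper: symmetric monoidality of $r^{\R}$ plus Proposition~\ref{HrespectDtr} to commute the trace past realization, then identify $r^{\R}$ on $\End(1)$ with $\sgn$ via the observation (the paper's equation \eqref{eq:sgn_deg}) recalled just before the lemma. The only point you flag more explicitly than the paper does is dualizability of $X$ as a smooth (not necessarily proper) $\R$-scheme; this is indeed the right thing to worry about, and your remark that the characteristic exponent of $\R$ is $1$ so \cite[Corollary B.2]{RiouAppendix} gives dualizability in $\SH(\R)$ without localization is precisely the justification the paper leaves implicit.
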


\begin{proof}
The real realization functor $r^{\R}$ is symmetric monoidal by \cite[Section 4]{heller2016galois}. Thus $r^{\R}\Tr (\varphi) = \Tr(r^{\R}(\varphi)) = \Tr(\varphi(\R))$ by, e.g., Proposition~\ref{HrespectDtr}. Then $r^{\R} \Tr(\varphi) = \sgn \Tr(\varphi)$ by \eqref{eq:sgn_deg}.
\end{proof}

\begin{pr}\label{pr:sigdlogA1_R}
Suppose that $\varphi: X \to X$ is an endomorphism of a smooth projective scheme $X$ over $\R$. Then in $\Z[[t]]$ there is an equality
\[
\sgn \dlog \zeta^{\Aone}_{X, \varphi} = \frac{d}{dt} \log \prod_i (P_{\varphi(\R)\vert H^i_{\Top}}(t))^{(-1)^{i+1}}
\] where $P_{\varphi(\R)\vert H_{\Top}^i}(t)$ denotes the characteristic polynomial
\[
P_{\varphi(\R)\vert H^i}(t) = \det(1-t \varphi(\R)\vert H^i_{\Top}(X(\R);\Z))
\] of the action of $\varphi(\R)$ on the singular cohomology of $X(\R)$.
\end{pr}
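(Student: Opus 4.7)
The plan is to apply the signature coefficient-by-coefficient and reduce to the classical topological Lefschetz fixed-point theorem, then convert sums of traces of powers to logarithmic derivatives of characteristic polynomials via Lemma~\ref{lem:linear_algebra}.

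First, since $\sgn \colon \GW(\R) \to \Z$ is a group homomorphism, it extends to a map of power series rings $\GW(\R)[[t]] \to \Z[[t]]$ applied to $\dlog \zeta^{\Aone}_{X,\varphi}$ term-by-term. By Lemma~\ref{sgnTr=Tr(R)} applied to $\varphi^m$ for each $m$, I get
\[
\sgn \dlog \zeta^{\Aone}_{X,\varphi} \;=\; \sum_{m \geq 1} \sgn \Tr(\varphi^m)\, t^{m-1} \;=\; \sum_{m \geq 1} \Tr(\varphi(\R)^m)\, t^{m-1},
\]
where each $\Tr(\varphi(\R)^m)$ is the categorical trace in $\SH$ of the endomorphism of $\Sigma^\infty X(\R)_+$ induced by $\varphi(\R)$.

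Next, since $X$ is smooth and projective over $\R$, the space $X(\R)$ is a closed smooth manifold (allowing the empty manifold), hence $\Sigma^\infty X(\R)_+$ is dualizable in $\SH$ via Atiyah duality. The classical Lefschetz fixed-point theorem identifies the categorical trace in $\SH$ with the Lefschetz number (see, e.g., \cite{PontoShulman} or Dold--Puppe):
\[
\Tr(\varphi(\R)^m) \;=\; \sum_{i} (-1)^{i} \Tr\!\left(\varphi(\R)^m \,\big|\, H^i_{\Top}(X(\R);\Z)\right).
\]
Since $H^i_{\Top}(X(\R);\Z)$ is finitely generated and torsion contributes zero, the integral trace agrees with the rational trace, so the right-hand side makes sense as an integer.

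Finally, I exchange the order of summation over $m$ and $i$ and apply Lemma~\ref{lem:linear_algebra} to the endomorphism of $H^i_{\Top}(X(\R);\Z) \otimes \Q$ induced by $\varphi(\R)$:
\[
\sum_{m \geq 1}\Tr\!\left(\varphi(\R)^m \,\big|\, H^i_{\Top}(X(\R);\Z)\right) t^{m-1} \;=\; -\frac{d}{dt}\log P_{\varphi(\R)\vert H^i_{\Top}}(t).
\]
Combining signs yields
\[
\sgn \dlog \zeta^{\Aone}_{X,\varphi} \;=\; \sum_i (-1)^{i+1}\frac{d}{dt}\log P_{\varphi(\R)\vert H^i_{\Top}}(t) \;=\; \frac{d}{dt}\log \prod_i P_{\varphi(\R)\vert H^i_{\Top}}(t)^{(-1)^{i+1}},
\]
as desired.

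The only non-formal input is the statement that for a closed smooth manifold $M$, the categorical trace of a self-map of $\Sigma^\infty M_+$ in $\SH$ equals the alternating sum of cohomological traces; this is a standard identification. Everything else is bookkeeping combining Lemma~\ref{sgnTr=Tr(R)}, the symmetric monoidality of $r^{\R}$, and Lemma~\ref{lem:linear_algebra}.
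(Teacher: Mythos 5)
Your argument is correct and takes essentially the same route as the paper: apply Lemma~\ref{sgnTr=Tr(R)} termwise to reduce to traces of $\varphi(\R)^m$ in $\SH$, identify these with alternating sums of cohomological traces, and then invoke Lemma~\ref{lem:linear_algebra}. The only presentational difference is that where you cite ``the classical Lefschetz fixed-point theorem for $\Sigma^\infty M_+$ in $\SH$'' as a black box (together with Atiyah duality for $X(\R)$), the paper spells this out by constructing the symmetric monoidal singular-cochain functor $C^*_{\Top}\colon \SW \to D(\ZZ)$, observing that it is an isomorphism on $\End(1)$, and applying Proposition~\ref{HrespectDtr}; these are two ways of saying the same thing, and your explicit remark that torsion contributes zero to the cohomological trace is a detail the paper leaves implicit.
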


\begin{proof}
Let $C^*_{\Top}: H \to D(\ZZ)$ denote the singular cochain functor from the homotopy category of topological spaces to the derived category of $\ZZ$-modules, which is symmetric monoidal. Since $D(\ZZ)$ admits a tensor inverse to $C^*_{\Top}(S^1)$, there is an induced symmetric monoidal functor $C^*_{\Top}: \SW \to D(\ZZ)$ from the topological Spanier--Whitehead category (see Proposition~\ref{pr:SWfunctor}).  We compute

 \begin{align*}
  \sgn \dlog \zeta^{\Aone}_{X, \varphi} &=  \sum_{m \geq 1} \Tr(\varphi(\R)^m) t^{m-1} && (\text{by Lemma~\ref{sgnTr=Tr(R)}}) \\
  &= \sum_{m \geq 1} C^*_{\Top}Tr(\varphi(\R)^m) t^{m-1} && ( C^*_{\Top}:\End(1_{\SW}) \to \ZZ \text{ is an isomorphism} )\\
  &= \sum_{m \geq 1}Tr( C^*_{\Top} (\varphi(\R)^m)) t^{m-1} && (C^*_{\Top} \text{ is symmetric monoidal}) \\
  &= \sum_{m \geq 1}Tr( H^*_{\Top} (\varphi(\R)^m)) t^{m-1} \\
   &= \sum_{m \geq 1}\sum_{i = - \infty}^\infty ( -1 )^i \Tr(H^i_{\Top}(\varphi(\R)^m)) t^{m-1} \\
   &= \sum_{i = - \infty}^\infty -( -1 )^i \frac{d}{dt}\log P_{\varphi(\R)\vert H_{\Top}^i}(t) && (\text{by Lemma~\ref{lem:linear_algebra}}). \qedhere
 \end{align*}
\end{proof}

\begin{ex}\label{ex:same_classical_different_enriched_log_zeta}
The enriched logarithmic zeta function can distinguish between schemes even when its rank, which is a non-enriched logarithmic zeta function, cannot. Consider the smooth projective $\R$-schemes $X:= \Res_{\C/\R} \mathbb{P}^1$ and $Y:= \mathbb{P}^1 \times \mathbb{P}^1$, and equip both with their identity endomorphisms. Applying real points, we have $X(\R) \cong \mathbb{P}^1(\C) \cong S^2$ and $Y(\R) \cong\mathbb{P}^1(\R) \times \mathbb{P}^1(\R) \cong S^1 \times S^1 $. By Proposition~\ref{pr:sigdlogA1_R},
 \begin{align*}
\sgn \dlog \zeta^{\Aone}_{X,1}  &= \frac{d}{dt} \log \frac{1}{(1- t)(1- t)}, \qquad \textrm{ and} \\
\sgn \dlog \zeta^{\Aone}_{Y, 1}  &= \frac{d}{dt} \log \frac{1}{(1- t)(1- t)} - \frac{d}{dt} \log \frac{1}{(1- t)^2}.
\end{align*}
In particular, $\dlog \zeta^{\Aone}_{X, 1} \neq \dlog \zeta^{\Aone}_{Y,1}$. By contrast, the non-enriched logarithmic zeta functions are equal: we have $X(\C) \cong Y(\C)$ and the ranks of the traces of the identity morphisms of $X$ and $Y$ are $\chi^{\Top}(X(\C)) = \chi^{\Top}(Y(\C)) = 4$.
\end{ex}

For a $\ZZ[1/d]$ scheme $\calX$ and a point $z: \Spec L \to \Spec \ZZ[1/d]$, let $\calX_{L}$ denote the pullback  $X_{L}: = \calX \otimes L$ of $\calX$ along $z$.
Similarly, given an endomorphism $\varphi: \calX \to \calX$, let $\varphi_{L}: \calX_{L}\to \calX_{L}$ denote the corresponding pullback. 

Note there are pullback map on Grothendieck--Witt groups
\[
z^* \colon \GW(\ZZ[1/d]) \to \GW(\F_q) \qquad  \text{and} \qquad \eta_{\R}^* \colon \GW(\ZZ[1/d]) \to \GW(\R).
\] Moreover, for $d=1$, the map $\eta_{\R}^*$ is the isomorphism $\GW(\ZZ) \stackrel{\cong}{\to} \GW(\R)$, which allows us to map elements of $\GW(\R)$ into $\GW(\ZZ[1/d])$. Let $\ZZ[ \lra{-1}, \lra{p}: p \text{ prime dividing } d]$ denote the polynomial ring in finitely many variables over $\ZZ$, where each of $\lra{-1}$ and $\lra{p}$ are viewed as variables. The notation defines an evident map $\ZZ[ \lra{-1}, \lra{p}: p \text{ prime dividing } d] \to \GW(\ZZ[1/d])$, which is a surjection by \cite[Lemma 5.6]{BW-A1Eulerclasses}. By a slight abuse of notation, we will also apply the maps $z^*$ and $\eta_{\R}^*$ to elements of the ring $\ZZ[ \lra{-1}, \lra{p}: p \text{ prime dividing } d]$.

\begin{tm}\label{pr:lift_to_R}
Let $\calX \to \Spec \ZZ[1/d]$ be smooth and proper for $d=1$ or $d$ even and let $\varphi: \calX \to \calX$ be an endomorphism. Let $z$ be a closed point of $\ZZ[1/d]$ with residue field $\F_p$. Then there is a a power series $\zeta$ in $\ZZ[ \lra{-1}, \lra{p}: p \text{ prime dividing } d][[t]]$ such that 
\[
\eta_{\R}^*  \zeta = \dlog \zeta^{\Aone}_{\calX_{\R}, \varphi_{\R}} \qquad \text{and} \qquad z^* \zeta = \dlog \zeta^{\Aone}_{\calX_{\F_p}, \varphi_{\F_p}}.
\]
\end{tm}

\begin{proof}
The scheme $\calX$ is dualizable in $\SH(\ZZ[1/d])$ by \cite[Theorem 3.4.2]{DDO-stable_homotopy_infty}. For $d$ even, we have a Hermitian K-theory spectrum $\KO \in \SH(\Z[1/d])$ with $[1_{\Z[1/d]}, \KO] \cong \GW(\Z[1/d])$ \cite{hornbostel2005a1}, and $f^* \KO = \KO$ for $f=z^*, \eta_{\R}^*$(and more generally)  \cite[Theorem 1.2]{panin2010motivic}. Let $u: 1_{\Z[1/d]} \to \KO$ denote the unit map of the ring spectrum $\KO$. Then $u \wedge \Tr(\varphi^m) \in [1_{\Z[1/d]}, \KO] \cong \GW(\Z[1/d])$. Choose preimages $\widetilde{(u \wedge \Tr(\varphi^m))}$ in $\ZZ[ \lra{-1}, \lra{p}: p \text{ prime dividing } d]$ of $u \wedge \Tr(\varphi^m)$ under the surjection $$\ZZ[ \lra{-1}, \lra{p}: p \text{ prime dividing } d] \to \GW(\ZZ[1/d]).$$  Define $\zeta$ by
\[
\zeta := \sum_{m=1}^{\infty} \widetilde{(u \wedge \Tr(\varphi^m))} t^{m-1}
\]

Let $f: \Spec L \to \Spec \ZZ[1/d]$ denote either of the maps $z: \Spec \F_p \to \Spec \ZZ[1/d]$ or $\eta_{\R}: \Spec \R \to \Spec \ZZ[1/d]$. The proposition then follows from the following equalities in $\GW(L)$:
\begin{align*}
f^* ( u \wedge \Tr(\varphi^m)) &=  f^* u \wedge  f^*( \Tr(\varphi^m)) & (f^* \text{ is symmetric monoidal}) \\
&= u \wedge \Tr(f^*\varphi^m) & (f^*\KO \simeq \KO \text{ and  Proposition~\ref{HrespectDtr}})\\
&=  \Tr(f^*\varphi^m) & ([1_L, 1_L] \stackrel{\cong}{\to}[1_L, \KO]  ). & 
\end{align*}

For $d=1$, replace $\KO$ with the spectrum $\KO'$ in $\SH(\ZZ)$ constructed in \cite[\S 3.8.3]{BHeta_periodic}. By \cite[Lemma 3.38(2)]{BHeta_periodic}, $[1_{\ZZ}, \KO'] \cong \GW(\ZZ)$. Let $u: 1 \to \KO'$ denote the unit inherited from the orientation of \cite[Lemma 3.38(1)]{BHeta_periodic}, and define $\zeta$ as above. By \cite[Lemma 3.38(3)]{BHeta_periodic},  $\eta_{\R}^* \KO' = \KO$ and for $p$ odd, $z^* \KO' = \KO$. It follows as before that $\eta_{\R}^*  \zeta = \dlog \zeta^{\Aone}_{\calX_{\R}, \varphi_{\R}}$ and $z^* \zeta = \dlog \zeta^{\Aone}_{\calX_{\F_p}, \varphi_{\F_p}}$ for $p$ odd. 

For $p=2$, $\GW(\F_p) \cong \ZZ$ via the rank. Thus the claim that $z^* \zeta = \dlog \zeta^{\Aone}_{\calX_{\F_p}, \varphi_{\F_p}}$ is equivalent to showing that $\rk \Tr(\eta_{\R}^* \varphi^m) = \rk \Tr(z^* \varphi^m)$. This follows from a similar argument to the above with the K-theory spectrum replacing $\KO$. In more detail, let $u: 1_{\ZZ} \to \K$ denote the unit for the K-theory spectrum. $f^* \K = \K$ by the geometric model of K-theory using Grassmannians \cite{morelvoev}. Thus $f^* ( u \wedge \Tr(\varphi^m)) = f^* u \wedge  f^*( \Tr(\varphi^m)) = u \wedge \Tr(f^*\varphi^m)$. We have $ u \wedge \Tr(f^*\varphi^m) = \rk \Tr(f^*\varphi^m)$ via the identification of the unit $[1_L, 1_L] \to [1_L, \K]$ with the rank map. The map $f^*:\ZZ \stackrel{\rk}{\cong} [1_\ZZ, \K] \to [1_L , \K] \stackrel{\rk}{\cong} \ZZ$ (via the rank isomorphisms on $\K_0$ for $\ZZ$ and $L$) is the identity on $\ZZ$. Thus $\rk \Tr(\eta_{\R}^* \varphi^m) = \rk \Tr(z^* \varphi^m)$, proving the proposition.

\end{proof}

\begin{rmk}\label{rmk:creditthenine}
The proof of Theorem~\ref{pr:lift_to_R} depends in a crucial way on \cite{CDHHLMNNS-3} though the use of the results of \cite{BHeta_periodic}.
\end{rmk}

\begin{co}\label{co:lift_R_computation}
For $\varphi:\calX \to \calX$ an endomorphism of a smooth and proper $\ZZ$-scheme.
\[
\dlog^{\Aone} \zeta_{\calX_{\F_q}, \varphi_{\F_q}} = z^* \dlog^{\Aone} \zeta_{\calX_{\R}, \varphi_{\R}}.
\]
\end{co}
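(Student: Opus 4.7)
The strategy is to deduce the corollary as a direct consequence of Proposition~\ref{pr:lift_to_R} applied in the $d = 1$ case. First I would invoke that proposition for the smooth and proper $\ZZ$-scheme $\calX$ with endomorphism $\varphi$: since no primes need to be inverted, it produces a power series $\zeta \in \ZZ[\langle -1 \rangle][[t]]$ satisfying
\[
\eta_{\R}^* \zeta = \dlog^{\Aone} \zeta_{\calX_{\R}, \varphi_{\R}} \qquad \text{and} \qquad z^* \zeta = \dlog^{\Aone} \zeta_{\calX_{\F_q}, \varphi_{\F_q}}.
\]

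The next step is to unpack what $z^*$ means on the right-hand side of the corollary, where it is applied coefficient-wise to a power series with coefficients in $\GW(\R)$. As noted just before Proposition~\ref{pr:lift_to_R}, for $d=1$ the base change $\eta_{\R}^* \colon \GW(\ZZ) \to \GW(\R)$ is an isomorphism, so we obtain a well-defined map $z^* \colon \GW(\R) \to \GW(\F_q)$ given by the composite $z^* \circ (\eta_{\R}^*)^{-1}$. Equivalently, since both $\eta_{\R}^*$ and $z^*$ factor through the surjection $\ZZ[\langle -1 \rangle] \twoheadrightarrow \GW(\ZZ)$, the induced map on $\GW(\R)$ is characterized by sending $\langle -1 \rangle \in \GW(\R)$ to $\langle -1 \rangle \in \GW(\F_q)$.

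With this interpretation in hand, the conclusion follows by applying $z^* \circ (\eta_{\R}^*)^{-1}$ coefficient-wise to the series $\eta_{\R}^* \zeta$:
\[
z^* \dlog^{\Aone} \zeta_{\calX_{\R}, \varphi_{\R}} = z^* (\eta_{\R}^*)^{-1} \eta_{\R}^* \zeta = z^* \zeta = \dlog^{\Aone} \zeta_{\calX_{\F_q}, \varphi_{\F_q}},
\]
where the final equality is the second half of Proposition~\ref{pr:lift_to_R}. There is essentially no obstacle: once Proposition~\ref{pr:lift_to_R} is in hand, the only substantive ingredient is the well-known fact that $\eta_{\R}^* \colon \GW(\ZZ) \to \GW(\R)$ is an isomorphism, and the remainder is a naturality statement for the universal lift $\zeta$ with respect to the two base changes $z$ and $\eta_{\R}$.
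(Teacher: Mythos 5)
Your proof is correct and follows essentially the same route as the paper's (very terse) proof, which simply cites Proposition~\ref{pr:lift_to_R} together with the isomorphism $\eta_{\R}^*\colon \GW(\ZZ)\to\GW(\R)$. You have merely spelled out the intermediate step that $z^*$ on a series over $\GW(\R)$ should be read as $z^*\circ(\eta_{\R}^*)^{-1}$, which is exactly the intended interpretation.
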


\begin{proof}
Follows from Theorem~\ref{pr:lift_to_R} and the isomorphism $\eta_{\R}^*: \GW(\ZZ) \stackrel{\cong}{\to} \GW(\R)$.
\end{proof}

\begin{proof}(of Theorem~\ref{thmintro:dlogZA1-lift-of-Frobenius-smooth-proper-Z})
The lift of $ \dlog \zeta^{\Aone}_{X,\F_q}$ to $\GW(\ZZ[\frac{1}{d}])[[t]]$ is $\sum_m (u \wedge \Tr(\varphi^m)) t^{m-1}$ as in the proof of Theorem~\ref{pr:lift_to_R}. Theorem~\ref{thmintro:dlogZA1-lift-of-Frobenius-smooth-proper-Z} then follows from Theorem~\ref{pr:lift_to_R} and  Proposition~\ref{pr:sigdlogA1_R}.

\end{proof}

\begin{ex}\label{ex:toric_varieties}
If $\varphi: X \to X$ is the relative Frobenius of a smooth, proper toric variety $X$ over $\F_p$, the Frobenius lifts to $\varphi:\calX \to \calX$ with $\calX$ smooth and proper over $\ZZ$ (see, e.g., \cite[Section 3.4]{Buch_Thomsen-Frobenius_toric_variety} or \cite[Section 2.4]{Borger-field_one_element}). (The criteria for smoothness and properness of toric varieties shows that the lift $\calX$ given in the references is indeed smooth and proper.) Theorem~\ref{thmintro:dlogZA1-lift-of-Frobenius-smooth-proper-Z} computes the logarithmic $\Aone$-zeta function of these varieties:
\begin{align*}
\dlog \zeta^{\Aone}_{X,\F_p} =& \sum_i \frac{(-1)^{i+1}}{2} (\langle 1 \rangle - \langle -1 \rangle) \frac{d}{dt}\log \det(1-t \varphi(\R)\vert H^i_{\Top}(\calX(\R);\Z))
\\ + &\sum_i \frac{(-1)^{i+1}}{2} (\langle 1 \rangle + \langle -1 \rangle) \frac{d}{dt}\log \det(1-t \varphi(\C)\vert H^i_{\Top}(\calX(\C);\Z)),
\end{align*}
\end{ex}

\begin{rmk}\label{rmk:dlog_rationality_R_again}
Since $\GW(\Z) \cong \GW(\R)$ is torsion-free, one can construct a power structure on $\GW(\Z)\otimes \Q$ as in Remark~\ref{rmk:dlogrationality} giving an $\Aone$-zeta function $\zeta^{\Aone}_{\calX/\Z,\varphi}$ with the appropriate logarithmic derivative. By Proposition~\ref{pr:sigdlogA1_R}, 
\begin{equation}\label{eq:degZeta=sum_Betti_number_X(R)}
\deg \sgn \zeta^{\Aone}_{\calX/\Z,\varphi} = \sum_i (-1)^{i+1} b_i
\end{equation} where $b_i$ is the $i$th Betti number of $X(\R)$. Equation (\ref{eq:degZeta=sum_Betti_number_X(R)}) produces non-trivial lower bounds on the Betti numbers of $\calX(\R)$. In the situation of Corollary~\ref{co:lift_R_computation}, this $\Aone$-zeta function also determines $\dlog^{\Aone} \zeta_{\calX_{\F_q}, \varphi_{\F_q}}$.
\end{rmk}

\begin{ex}\label{ex:bounds:b_i}
In the situation of Theorem~\ref{thmintro:dlogZA1-lift-of-Frobenius-smooth-proper-Z}, $\calX(\R) = \emptyset$ implies that $\sgn \dlog \zeta^{\Aone}_{X,\F_p} = 0$. As always, $\rk \dlog \zeta^{\Aone}_{X,\F_p} = \frac{d}{dt} \log \zeta_{X,\F_p}$. For $d=1$, it follows that $\dlog \zeta^{\Aone}_{X,\F_p} = \sum_m \frac{\vert X(\F_{p^m}) \vert}{2} (\langle 1 \rangle + \langle -1 \rangle) t^{m-1}$. On the other hand, we must have $\dlog \zeta^{\Aone}_{X,\F_p}  =  \sum_{m \geq 1}\left( \sum_{i \vert m} \alpha(i) \Tr_{\F_{q^i/\F_q}}\lra{1} \right)t^{m-1}$ in $\GW(\F_p)[[t]]$ by Hoyois's fixed point theorem \eqref{AoneZeta_from_points}. Suppose $p \equiv 3 \mod 4$. This implies that $\frac{\vert X(\F_{p^m}) \vert}{2}$ is an integer and $\sum_{\substack{i|m\\ i\ \text{even}}}\frac{1}{i}\sum_{d|i} \mu(d)|X(\F_{q^{i/d}})| \equiv \frac{\vert X(\F_{p^m}) \vert}{2} \mod 2$. This condition is non-trivial and necessary for $\calX(\R) = \emptyset$. It is not satisfied by $\bbb{P}^2$. 

\end{ex}

When the Frobenius does not lift, we wish to use Theorems~\ref{thm:rationality_cellular_slick} and \ref{thm:rationality_cellular} to relate the logarithmic $\Aone$-zeta function to the real points of a lift over a ring with a real place. 

Let $\underline{I}^n$ denote the unramified sheaf corresponding to the $n$th power of the fundamental ideal $I = \ker(\rk: \GW(-) \to \Z)$ \cite[Example 3.34]{A1-alg-top}. 
Recall that the singular cochain complex of a topological space $X$ is denoted $C_{\Top}^*(X;\Z)$. For $C_*,C_*'$ in $\Ch(\Ab_{\Aone}(k))$, we may view $\Hom_{\Ch(\Ab_{\Aone}(k))}(C_*, C_*')$ as an element of $\Ch(\Z)$.

\begin{pr}\label{pr:rRcellHomXsimpCell}

Let $X$ be a cellular smooth scheme over $\R$ of dimension $d$ and let $n>d$ be an integer. There is a quasi-isomorphism $$\Hom_{\Ch(\Ab_{\Aone}(k))}(C_*^\cell(X), \underline{I}^n) \simeq C_{\Top}^*(X(\R);\Z).$$

\end{pr}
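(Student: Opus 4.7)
My strategy is to use the cellular filtration $\emptyset = \Omega_{-1}(X) \subsetneq \Omega_0(X) \subsetneq \cdots \subsetneq \Omega_s(X) = X$ with codimension-$p$ strata $X_p = \Omega_p(X) \smallsetminus \Omega_{p-1}(X)$ to match up associated graded pieces on both sides of the claimed quasi-isomorphism. For the LHS, the cellular computation recorded immediately after Remark~\ref{rmk:finite_cellular_schemes_Ccell_KMWbi} gives $C_p^{\cell}(X) \cong H_0^{\Aone}(X_p) \otimes_{\Aone} \underline{K}^{\MW}_p$ for $p \geq 1$ and $C_0^{\cell}(X) \cong H_0^{\Aone}(X_0)$. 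Since $H_0^{\Aone}(X_p)$ is the free strictly $\Aone$-invariant sheaf on $X_p$, adjunction together with an analog of Lemma~\ref{HomAbA1_KMWnmn+m} establishing $\underline{\Hom}_{\Ab_{\Aone}(\R)}(\underline{K}^{\MW}_p, \underline{I}^n) \cong \underline{I}^{n-p}$ (built from the multiplication pairing $\underline{K}^{\MW}_p \otimes_{\Aone} \underline{I}^{n-p} \to \underline{I}^n$ together with iterated application of Voevodsky's $(-)_{-1}$ construction and using that $(\underline{I}^n)_{-1} \cong \underline{I}^{n-1}$) yields
\[ \Hom_{\Ab_{\Aone}(\R)}(C_p^{\cell}(X), \underline{I}^n) \cong \underline{I}^{n-p}(X_p). \]

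Next, I would invoke a theorem that for a smooth $\R$-scheme $Y$ of dimension $e$ and $m > e$, the signature induces an isomorphism $\underline{I}^m(Y) \cong H^0_{\Top}(Y(\R); \Z)$. This is a Pfister/Jacobson-type result for fields, extended to schemes via Scheiderer's real \'etale comparison machinery and refined further by Bachmann in the framework of $\rho$-periodic motivic spectra. Since $\dim X_p = d - p$ and the hypothesis $n > d$ gives $n - p > d - p$, applying this with $Y = X_p$ produces an identification $\Hom_{\Ab_{\Aone}(\R)}(C_p^{\cell}(X), \underline{I}^n) \cong H^0_{\Top}(X_p(\R); \Z)$.

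Third, I would match the differentials. The cellular differential on $C_*^{\cell}(X)$ is the connecting map in the $\Aone$-homology long exact sequence associated to the cofiber sequence $\Th_{\Omega_{p-1}(X)} 0 \to \Th_{\Omega_p(X)} 0 \to \Th_{X_p}(\nu_p)$ coming from Morel--Voevodsky purity. Real realization sends this to the corresponding topological cofiber sequence on real points, using that the realization of a Thom space is the Thom space of the realized real vector bundle $\nu_p(\R)$ (with orientation data absorbed into the $\underline{K}^{\MW}$-coefficients on the $\Aone$ side). Naturality of the Jacobson/Scheiderer identification then translates the dualized $\Aone$-boundary into the usual topological cellular coboundary for the closed filtration $\{\Omega_p(X)(\R)\}$ of $X(\R)$, and a standard spectral-sequence argument (which collapses cleanly in the strict cellular case since each $X_p(\R)$ is then a disjoint union of Euclidean spaces) identifies the resulting cellular complex with $C^*_{\Top}(X(\R); \Z)$ up to quasi-isomorphism.

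The principal obstacle is the second step: the identification $\underline{I}^m(Y) \cong H^0_{\Top}(Y(\R); \Z)$ in the dimensional range $m > \dim Y$, which requires genuine real-algebraic input rather than formal manipulations and constitutes the key geometric content of the proposition. A secondary but technically delicate point is tracking naturality through the purity-induced boundary, particularly the orientation twist for $\nu_p$ when its real realization fails to be orientable, so that the cell-by-cell identifications assemble into a bona fide morphism of cochain complexes.
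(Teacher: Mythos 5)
Your proposal takes a fundamentally different and much harder route than the paper, and it has a genuine gap at the step you yourself flag as delicate.

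The paper's proof avoids constructing any morphism of complexes at all. The key observation you miss is that since $\Z$ is a \emph{hereditary} ring, two bounded-below cochain complexes of $\Z$-modules are quasi-isomorphic as soon as their cohomology groups are abstractly isomorphic degree by degree. With that in hand, there is no need to match differentials, track naturality through purity boundaries, or worry about orientation twists on $\nu_p(\R)$. The paper then computes the cohomology of the left-hand side in one step by citing Morel--Sawant's result that $\Hom_{\Ch(\Ab_{\Aone}(k))}(C_*^\cell(X),\calF)$ computes $H^*_{\nis}(X;\calF)$ for any strictly $\Aone$-invariant $\calF$ (this is exactly the packaging of all the cell-by-cell spectral sequence bookkeeping you propose to redo by hand), combines it with the Nisnevich-equals-Zariski comparison, and applies Jacobson's theorem $H^*_{\Zar}(X;\underline{I}^n) \cong H^*_{\Top}(X(\R);\Z)$ for $n > \dim X$ directly to $X$, not stratum-by-stratum.

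Your third step is where the gap lies. You want to realize the $\Aone$-cellular differentials topologically, but $C_*^{\cell}(X)$ is a complex of sheaves, not of spaces, so ``real realization of the cofiber sequence'' does not directly produce a map of cochain complexes after applying $\Hom(-,\underline{I}^n)$ and the signature isomorphism. Verifying that the dualized $\Aone$-boundary agrees, via the Jacobson identification, with the topological cellular coboundary would require a genuine compatibility statement relating Morel--Sawant's boundary map to real \'etale cohomology boundaries, which is nowhere near formal. Moreover, even if this were done, your spectral-sequence collapse argument only works in the strict cellular case, whereas the proposition is stated for arbitrary cellular $X$: there the strata $X_p$ are general smooth affine cohomologically trivial schemes, so $X_p(\R)$ need not be a disjoint union of Euclidean cells, and the resulting filtration of $X(\R)$ need not be a CW filtration. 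Your own identification of the stratum-level groups is fine (it is essentially the calculation the paper does in Proposition~\ref{pr:rRcellHomXstrictCell}), but the assembly into a quasi-isomorphism is the hard part, and it is exactly the part the hereditary-ring observation lets you skip entirely.
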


\begin{proof}
Both $C^* :=\Hom_{\Ch(\Ab_{\Aone}(k))}(C_*^\cell(X), \underline{I}^n)$ and $C_{\Top}^*(\calX(\R);\Z)$ are bounded below cohomological complexes of $\Z$-modules. Since $\Z$ is a hereditary ring, it is sufficient to show that $C^*$ and $ C_{\Top}^*(\calX(\R);\Z)$ have isomorphic homology groups. By \cite[Proposition 2.27]{MorelSawant}, 
\[
H^*(\Hom_{\Ch(\Ab_{\Aone}(k))}(C_*^\cell(X), \underline{I}^n)) \cong H^*_{\nis}(X;\underline{I}^n).
\] By \cite[Remark 2.28]{MorelSawant} and \cite{A1-alg-top}, for any smooth $k$-scheme $Y$ and strictly $\mathbb{A}^1$-invariant sheaf $\mathcal{F}$ on $\Sm_k$, the canonical morphism $H^*_{\Zar}(Y;\calF) \cong H^*_{\nis}(Y;\calF) $ is an isomorphism for $* \geq 0$. Thus $H^*_{\nis}(X;\underline{I}^n) \cong H^*_{\Zar}(X;\underline{I}^n)$. A theorem of J.~Jacobson \cite[Corollary 8.11]{Jacobson-RCoh_I} gives that the signature induces an isomorphism $H^*_{\Zar}(X;\underline{I}^n) \cong H^*_{\Top}(X(\R);\Z)$ for $n > \dim X$.
\end{proof}

Let $X$ be a simple cellular scheme over $\R$. By Remark~\ref{rmk:finite_cellular_schemes_Ccell_KMWbi}, the complex $C^{\cell}_*(X)$ is quasi-isomorphic to a complex of the form
\begin{equation}\label{CcellstrictCellX}
\cdots \leftarrow 0 \leftarrow \Z^{b_0} \leftarrow (\underline{K}^{\MW}_1)^{b_1} \leftarrow (\underline{K}^{\MW}_2)^{b_2 } \leftarrow \cdots \leftarrow (\underline{K}^{\MW}_s)^{b_s} \leftarrow 0 \leftarrow \cdots.
\end{equation} 

\begin{pr}\label{pr:rRcellHomXstrictCell}
Let $X$ be a simple cellular scheme over $\R$ and let $b_i$ be as in \eqref{CcellstrictCellX} with $b_i=0$ for $i>s$. Then there is a complex $C_*$ in $\Ch_{\geq 0}(\Z)$ with $C_i \cong \Z^{b_i}$ and a quasi-isomorphism 
\[
C_* \simeq  C^{\Top}_*(X(\R);\Z)
\] between $C_*$ and the singular chains on the real manifold $X(\R)$.
\end{pr}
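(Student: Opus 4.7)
The plan is to construct $C_*$ by $\Z$-dualizing the cochain complex $D^* \colonequals \Hom_{\Ch(\Ab_{\Aone}(\R))}(C_*^\cell(X), \underline{I}^n)$ from Proposition~\ref{pr:rRcellHomXsimpCell}, with $n > d \colonequals \dim X$ chosen so that Proposition~\ref{pr:rRcellHomXsimpCell} applies and gives $D^* \simeq C_{\Top}^*(X(\R);\Z)$. The first step is to identify the terms of $D^*$. From \eqref{CcellstrictCellX}, $C^\cell_i(X) \cong (\underline{K}^{\MW}_i)^{b_i}$ for $i \geq 1$ and $C^\cell_0(X) \cong \Z^{b_0}$, so
\[ D^i \cong \Hom_{\Ab_{\Aone}(\R)}(\underline{K}^{\MW}_i, \underline{I}^n)^{b_i} \text{ for } i \geq 1, \qquad D^0 \cong \underline{I}^n(\R)^{b_0}. \]
By the same kind of argument as in Lemma~\ref{HomAbA1_KMWnmn+m}, iterating the $(-)_{-1}$ contraction (which sends $\underline{I}^m$ to $\underline{I}^{m-1}$ and $\underline{K}^{\MW}_m$ to $\underline{K}^{\MW}_{m-1}$) and evaluating at $1 \in \underline{\GW}(\R)$ yields $\Hom_{\Ab_{\Aone}(\R)}(\underline{K}^{\MW}_i, \underline{I}^n) \cong I^{n-i}(\R)$. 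Since $\GW(\R) \cong \Z[\epsilon]/(\epsilon^2-1)$ and the fundamental ideal satisfies $I^m(\R) = 2^{m-1}(\epsilon-1)\Z \cong \Z$ for $m \geq 1$, and since $n > d$ ensures $n - i \geq 1$ for $0 \leq i \leq d$, we conclude $D^i \cong \Z^{b_i}$ for $0 \leq i \leq d$ and $D^i = 0$ otherwise.

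The second step is to define and analyze $C_*$. Set $C_i \colonequals \Hom_\Z(D^i, \Z)$ with differential $\partial_i \colon C_i \to C_{i-1}$ given by precomposition with the cochain differential of $D^*$; since each $D^i$ is finitely generated and free, $C_i \cong \Z^{b_i}$ as required. Because $D^*$ consists of finitely generated free $\Z$-modules, $\Hom_\Z(D^*, \Z) \simeq R\Hom_\Z(D^*, \Z)$, and the universal coefficient theorem yields
\[ H_i(C_*) \cong \Hom_\Z(H^i(D^*), \Z) \oplus \Ext^1_\Z(H^{i+1}(D^*), \Z). \]
Combined with $D^* \simeq C_{\Top}^*(X(\R);\Z)$ and the universal coefficient theorem for singular cohomology (which gives $H^i(X(\R);\Z)^{\mathrm{tf}} = H_i(X(\R);\Z)^{\mathrm{tf}}$ and $H^i(X(\R);\Z)^{\mathrm{tor}} = H_{i-1}(X(\R);\Z)^{\mathrm{tor}}$), direct substitution gives $H_i(C_*) \cong H_i(X(\R); \Z)$. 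Since $\Z$ has global dimension one, bounded complexes of $\Z$-modules are determined up to quasi-isomorphism by their homology, yielding $C_* \simeq C^{\Top}_*(X(\R);\Z)$.

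The main obstacle is establishing $\Hom_{\Ab_{\Aone}(\R)}(\underline{K}^{\MW}_i, \underline{I}^n) \cong I^{n-i}(\R)$, which rests on the fact that the contraction $(-)_{-1}$ sends $\underline{I}^m$ to $\underline{I}^{m-1}$; this is standard in Milnor--Witt K-theory but needs a careful citation or direct verification. An alternative would be to present $\underline{I}^m$ via its relation to $\underline{K}^{\MW}_m$ or to use the module structure of $\underline{I}^\ast$ over Milnor--Witt K-theory directly.
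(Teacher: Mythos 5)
Your proposal follows essentially the same route as the paper: dualize the complex $D^* = \Hom_{\Ch(\Ab_{\Aone}(\R))}(C_*^\cell(X), \underline{I}^n)$ over $\Z$, compute its terms via the contraction $\Hom_{\Ab_{\Aone}(\R)}(\underline{K}^{\MW}_i, \underline{I}^n)\cong \underline{I}^{n-i}$, and conclude from Proposition~\ref{pr:rRcellHomXsimpCell}. The references you flag as needed for the contraction of $\underline{I}^m$ are exactly those the paper cites (\cite[Lemma 5.1.3]{AsokWickelgrenWilliams_SEHP} and \cite[Proposition 2.9]{Asok_Fasel_alg_vector_bundles}); and your universal-coefficient/hereditary-ring argument for passing from $D^*\simeq C^*_{\Top}(X(\R);\Z)$ to $\Hom_\Z(D^*,\Z)\simeq C^{\Top}_*(X(\R);\Z)$ spells out a step the paper leaves implicit by simply invoking Proposition~\ref{pr:rRcellHomXsimpCell}. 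That step is fine in your version, since $D^*$ is a bounded complex of finitely generated free $\Z$-modules, so $\Hom_\Z(-,\Z)$ is already derived and the UCT applies.
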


\begin{proof}
We claim that for $n>s$, we may take $C_*$ to be $\Hom_{\Ch(\Z)}(\Hom_{\Ch(\Ab_{\Aone}(k))}(C_*^\cell(X), \underline{I}^n),\Z)$, so the dual of $\Hom_{\Ch(\Ab_{\Aone}(k))}(C_*^\cell(X), \underline{I}^n)$ has the claimed properties. Let $(-)_{-1}$ denote the $-1$ construction of Voevodsky \cite[p 33]{A1-alg-top} and let $(-)_{-i}$ denote the result of applying the $-1$ construction $i$ times. For $n>s\geq i \geq 0$, the internal $\Hom$ in $\Ab_{\Aone}(k)$ from $\underline{K}^{\MW}_i$ to $\underline{I}^n$ is computed  via $\Hom_{\Ab_{\Aone}(k)}(\underline{K}^{\MW}_i, \underline{I}^n) \cong  \underline{I}_{-i}^n \cong \underline{I}^{n-i}$ by \cite[Lemma 5.1.3]{AsokWickelgrenWilliams_SEHP} and \cite[Proposition 2.9]{Asok_Fasel_alg_vector_bundles}. Since we have $ \GW(\R) \cong \Z[\langle -1 \rangle]/(\langle -1 \rangle^2 - 1)$, the ideal $I^n \subset \GW(\R)$ is the principal ideal generated by $(\langle -1 \rangle - 1)^n$. Thus we have $ \underline{I}^{n-i}(\R) \cong \Z$, so $C_i \cong \Z^{b_i}$. By Proposition~\ref{pr:rRcellHomXsimpCell}, we have $C_* \simeq  C^{\Top}_*(X(\R);\Z)$.
\end{proof}

Note that the definition of a strict cellular structure (see Definition \ref{df:strict_cellular_structure}) makes sense over an arbitrary base scheme. Given a smooth proper strictly cellular scheme $\calX \to \Spec A$ over a ring $A$ and a real point $\eta: \Spec \R \to A$, define $\calX_{\eta}(\R)$ to be the real manifold associated to the real points of $\calX \times_A \R$.

\begin{pr}\label{pr:RandFqPlace}
Suppose $\calX \to \Spec A$ is a smooth projective strictly cellular scheme with points $z: \Spec \F_q \to A$ and $\eta: \Spec \R \to A$. Let $X:=\calX \times_A \F_q$ denote the pullback of $\calX$ along $z$. Let $\varphi: X \to X$ denote an endomorphism of $X$. Then
\begin{enumerate}
\item\label{pr:RandFqPlace:itemR} There is a complex $C_*$ in $\Ch_{\geq 0}(\Z)$ quasi-isomorphic to the topological chains on $\calX_{\eta}(\R)$
\[
C_* \simeq C^{\Top}_*(\calX_{\eta}(\R);\Z).
 \]  of the form
 \[
 \cdots \leftarrow 0 \leftarrow \Z^{b_0} \leftarrow \Z^{b_1} \leftarrow \Z^{b_2 } \leftarrow \cdots \leftarrow \Z^{b_s} \leftarrow 0 \leftarrow \cdots
 \] where $b_i$ is the number of $i$-cells of $\calX$.

 \item \label{pr:RandFqPlace:itemRandFq} The $\Aone$-logarithmic zeta function of $\varphi$ is given by the formula \[\dlog \zeta^{\Aone}_{X, \varphi} = \sum_{i = - \infty}^\infty -\langle -1 \rangle^i \frac{d}{dt}\log P_{C^\cell_i(\varphi)}(t) \] where \[
P_{C^\cell_i(\varphi)}(t) = \det (1- t C^\cell_i(\varphi)).
\] and $C^\cell_i(\varphi)$ is a square matrix of elements of $\GW(\F_q)$ of size $b_i \times b_i$.
 \end{enumerate}

\end{pr}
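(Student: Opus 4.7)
The plan is to observe that the given strict cellular structure on $\calX \to \Spec A$ pulls back along any morphism $\Spec k' \to \Spec A$ to a strict cellular structure on $\calX \times_A \Spec k'$ with the same combinatorial data. Indeed, the closed subschemes $\Sigma_i(\calX)$ and open complements $\Omega_i(\calX)$ of Definition~\ref{df:strict_cellular_structure} restrict to a filtration of $\calX \times_A k'$, and each stratum $\Sigma_i(\calX) \smallsetminus \Sigma_{i-1}(\calX) \cong \coprod_{j=1}^{b_i} \bA^i_A$ pulls back to $\coprod_{j=1}^{b_i} \bA^i_{k'}$. In particular, both $\calX_\eta$ (over $\R$) and $X = \calX_z$ (over $\F_q$) acquire strict cellular structures with the same number $b_i$ of $i$-cells as $\calX$.

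For part~(\ref{pr:RandFqPlace:itemR}), I would then directly apply Proposition~\ref{pr:rRcellHomXstrictCell} to the smooth projective strictly cellular (hence simple cellular) scheme $\calX_\eta$ over $\R$. That proposition produces a complex $C_* \in \Ch_{\geq 0}(\Z)$ with $C_i \cong \Z^{b_i}$ and a quasi-isomorphism $C_* \simeq C^{\Top}_*(\calX_\eta(\R);\Z)$, which is exactly the claim of~(\ref{pr:RandFqPlace:itemR}) once one uses the equality of cell counts established in the previous paragraph.

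For part~(\ref{pr:RandFqPlace:itemRandFq}), I would apply Theorem~\ref{thm:rationality_cellular} to the smooth projective simple cellular $\F_q$-scheme $X = \calX_z$ together with the endomorphism $\varphi \colon X \to X$. This immediately gives the displayed formula for $\dlog \zeta^{\Aone}_{X,\varphi}$ in terms of $\det(1 - t C^\cell_i(\varphi))$. The remaining content is the identification of $C^\cell_i(\varphi)$ as a $b_i \times b_i$ matrix with entries in $\GW(\F_q)$: by Remark~\ref{rmk:finite_cellular_schemes_Ccell_KMWbi}, the strict cellular structure on $X$ gives $C^\cell_i(X) \cong (\underline{K}^{\MW}_i)^{b_i}$ for $i \geq 1$ and $C^\cell_0(X) \cong \Z^{b_0}$, and by Lemma~\ref{HomAbA1_KMWnmn+m} an endomorphism of $(\underline{K}^{\MW}_i)^{b_i}$ is encoded by a $b_i\times b_i$ matrix of elements of $\End(\underline{K}^{\MW}_i) \cong \GW(\F_q)$, with the $i=0$ case reducing to a matrix of integers which we view as an element of $\GW(\F_q)$ via the unit $\Z \to \GW(\F_q)$.

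The main obstacle, and really the only non-bookkeeping step, is the compatibility of the strict cellular structure with base change: one must verify that the hypotheses of Proposition~\ref{pr:rRcellHomXstrictCell} and of Theorem~\ref{thm:rationality_cellular} are met simultaneously by the two fibers $\calX_\eta$ and $\calX_z$, and that the integers $b_i$ produced by each coincide with the cell counts of the ambient $\calX$. Once this is set up, both parts follow by direct citation.
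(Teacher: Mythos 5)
Your proof is correct and takes essentially the same route as the paper: both parts follow by citing Proposition~\ref{pr:rRcellHomXstrictCell} for the fiber $\calX_\eta$ over $\R$ and Theorem~\ref{thm:rationality_cellular} for the fiber $X = \calX_z$ over $\F_q$. You merely make explicit the base-change compatibility of the strict cellular structure (and the identification of the cell counts $b_i$), which the paper leaves implicit.
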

\begin{proof}
Let $C_*$ denote the complex of Proposition~\ref{pr:rRcellHomXstrictCell} applied to the strictly cellular scheme $\calX \times_{\eta} \R$ over $\R$. The condition of \eqref{pr:RandFqPlace:itemR} is satisfied by Proposition~\ref{pr:rRcellHomXstrictCell}. Moreover, $b_i$ is the number of connected components of $\Omega_i(X)\smallsetminus \Omega_{i-1}(X)$ in the decomposition \[ \Omega_i(X) \smallsetminus \Omega_{i-1}(X) \cong \coprod_{\alpha \in \beta_i} \bA^{n-i}.\] The formula  \eqref{pr:RandFqPlace:itemRandFq} then follows from Theorem~\ref{thm:rationality_cellular}. 
\end{proof}

\section{Computing $\Aone$-logarithmic zeta functions and examples} \label{sec:computingexamples}

In this section, we describe two methods to explicitly compute examples of $\Aone$-zeta functions. First, for schemes with a strict cellular structure over a finite field, we explicitly compute the enriched logarithmic zeta function of Frobenius endomorphisms using Theorem \ref{thm:rationality_cellular}; this gives examples of the enriched logarithmic zeta function for varieties like projective space and Grassmannians.

Then over a finite field, we also use Hoyois' enriched Grothendieck--Lefschetz trace formula and M\"obius inversion to obtain the coefficients of the enriched logarithmic zeta function of the Frobenius from those of the logarithmic derivative of the classical zeta function. We may use this method to compute the enriched logarithmic zeta function for non-cellular schemes, such as elliptic curves.

\subsection{\(\Aone\)-zeta function of Frobenius endomorphisms using Theorem \ref{thm:rationality_cellular}}
 Let $k$ be a finite field $\mathbb{F}_q$ and $X$ be a smooth proper $k$-scheme with a strict cellular structure
\begin{equation}\label{strict_cellular_structure_X_equn}
\emptyset  = \Sigma_{-1}(X) \subset \Sigma_0(X) \subset \Sigma_1(X) \subset \cdots \subset \Sigma_{n-1}(X) \subset \Sigma_n(X) = X
\end{equation}
defined over \(k\).
As above, we set $\Omega_i(X) := X \setminus \Sigma_{n-i-1}(X)$. 

Following \cite[Lemma 3.14]{A1-alg-top}, for a positive integer $n$, let $n_\epsilon \in \GW(k)$ denote $$n_\epsilon : = \langle 1 \rangle + \langle -1 \rangle + \langle 1 \rangle + \cdots + \langle -1 \rangle, $$ the class of the rank $n$ diagonal form with alternating $1$'s and $-1$'s along the diagonal. Note that $n_\epsilon m_\epsilon = (nm)_\epsilon$.

\begin{pr}\label{pr:deg_Ccell_Frob}
Let $X$ be a smooth proper scheme over \(\mathbb{F}_q\) with a strict cellular structure defined over \(\mathbb{F}_q\), and let $\varphi$ be the relative Frobenius. Then $C_{\ast}^{\cell}(\varphi)$ is the map of complexes which in degree $i$ $$C_{i}^{\cell}(\varphi): C_{i}^{\cell}(X) \to C_{i}^{\cell}(X)$$
is multiplication by $q^i_\epsilon$.
\end{pr}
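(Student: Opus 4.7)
The plan is to reduce the computation of $C_i^\cell(\varphi)$ to the classical $\Aone$-degree of the $q$-th power map on $\bA^1/(\bA^1\smallsetminus 0)$. Since the strict cellular structure \eqref{strict_cellular_structure_X_equn} is defined over $\mathbb{F}_q$, the relative Frobenius $\varphi$ preserves every term of the filtration, hence restricts to endomorphisms of each $\Omega_i(X)$ and each stratum $X_i = \Omega_i(X) \smallsetminus \Omega_{i-1}(X)$. Because every connected component of $X_i$ is individually defined over $\mathbb{F}_q$, $\varphi$ fixes each component of $X_i$ (permuting none of them) and restricts on it to the relative Frobenius of $\bA^{n-i}$. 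The induced endomorphism of the wedge decomposition $\Th_{X_i}(\nu_i) \simeq \bigvee_{b_i} \Th_{\bA^{n-i}}(\nu_i|_{\bA^{n-i}})$ thus preserves each wedge summand, so $C_i^\cell(\varphi)$ is represented by the diagonal matrix whose entries are the $\Aone$-degrees of the restrictions to each summand.

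By \cite[Lemma 2.13]{MorelSawant} we may fix an $\mathbb{F}_q$-trivialization of $\nu_i|_{\bA^{n-i}}$, which together with the $\Aone$-contractibility of $\bA^{n-i}$ yields an $\Aone$-weak equivalence
\[
\Th_{\bA^{n-i}}(\nu_i|_{\bA^{n-i}}) \simeq (\bA^{n-i})_+ \wedge \bA^i/(\bA^i\smallsetminus 0) \simeq \bA^i/(\bA^i\smallsetminus 0) \simeq S^{2i,i}.
\]
Under this identification the restricted Frobenius corresponds, up to $\Aone$-homotopy, to the $i$-fold smash product of the $q$-th power map on $\bA^1/(\bA^1\smallsetminus 0)$, since Frobenius acts by taking $q$-th powers on each coordinate. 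Morel's computation of the $\Aone$-degree of the $q$-th power map on $\bA^1/(\bA^1\smallsetminus 0) \simeq S^{2,1}$ (see \cite[Lemma 3.14 and \S 3.2]{A1-alg-top}) yields $q_\epsilon \in \GW(\mathbb{F}_q)$. By the multiplicativity of the $\Aone$-degree under smash products, together with the identity $n_\epsilon m_\epsilon = (nm)_\epsilon$, the $i$-fold smash has degree $q_\epsilon^i = q^i_\epsilon$. Applying $\widetilde{H}^{\Aone}_i$ and the isomorphism $\widetilde{H}^{\Aone}_i(S^{2i,i}) \cong \underline{K}^{\MW}_i$, the Frobenius acts on each summand of $C_i^\cell(X) \cong (\underline{K}^{\MW}_i)^{b_i}$ by $q^i_\epsilon$, as claimed. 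The case $i=0$ is handled by the same argument with $\underline{K}^{\MW}_0$ replaced by $\ZZ$ and $q^0_\epsilon = 1$.

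The main technical subtlety is verifying that the chosen $\mathbb{F}_q$-trivialization is compatible with the Frobenius action, so that the identification of the restricted Frobenius with the standard $q$-th power map on $\bA^i/(\bA^i\smallsetminus 0)$ holds in $\calH(k)_*$. This compatibility follows from the $\mathbb{F}_q$-naturality of Morel--Voevodsky purity, together with the observation that any two $\mathbb{F}_q$-trivializations differ by an element of $\GL_i(\mathbb{F}_q(\bA^{n-i}))$, whose induced action on $\widetilde{H}^{\Aone}_i(S^{2i,i}) \cong \underline{K}^{\MW}_i$ is governed by its determinant and cancels between source and target under the conjugation that relates the two trivializations.
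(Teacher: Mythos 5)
Your proof is correct and takes essentially the same overall approach as the paper, but replaces the middle reduction with a more direct one. Where the paper restricts to an $\mathbb{F}_q$-rational point $p_m$ in each cell and invokes SGA1 to produce an \'etale chart $U \to \mathbb{A}^n$ identifying the pair $(\Omega_i \cap U, X_{im} \cap U)$ with the standard pair $(\mathbb{A}^n, \mathbb{A}^{n-i})$ — and then runs through purity, a linear inclusion, and excision to land on $\PP^i/\PP^{i-1}$ — you trivialize $\nu_i$ over each contractible cell and reduce directly to $\bA^i/(\bA^i\smallsetminus 0)$. The shortcut is valid: an $\mathbb{F}_q$-trivialization is an $\mathbb{F}_q$-morphism and therefore automatically intertwines relative Frobenius by naturality, so the Frobenius in your chosen coordinates is exactly coordinate-wise $q$-th power with no correction term. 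In particular the worry in your final paragraph is misplaced: you do not need the $\GL_i(\mathbb{F}_q(\bA^{n-i}))$-cancellation argument to verify Frobenius-compatibility, since that compatibility is automatic; what that argument would address is independence of the choice of trivialization, which is a separate (and here unnecessary) check. Two small corrections: (i) the computation that the $\Aone$-degree of $x \mapsto x^q$ on $\bA^1/(\bA^1 \smallsetminus 0)$ equals $q_\epsilon$ is due to Cazanave's B\'ezoutian, as the paper cites, not Morel's Lemma 3.14 (which only introduces the element $n_\epsilon$); and (ii) passing from the $\Aone$-degree of the map of spheres to the induced multiplication on $\widetilde{H}^{\Aone}_i$ uses Morel's Hurewicz theorem, which the paper invokes explicitly and you should too.
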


\begin{proof}
We use the above notation for the strict cellular structure \eqref{strict_cellular_structure_X_equn} on $X$. Since the relative Frobenius $\varphi$ gives an endomorphism of the smooth pair $(\Omega_i(X), \Omega_i(X)\setminus X_i)$, it determines a map  $\varphi: \Th_{X_n}( \nu_n) \to \Th_{X_n}( \nu_n)$ by purity \cite[Theorem 2.23]{morelvoev}. The map of complexes $C_{\ast}^{\cell}(\varphi)$ in degree $i$ is given by $\widetilde{H}^{\Aone}_{i}(\varphi):  \widetilde{H}^{\Aone}_{i} (\Th_{X_i}( \nu_i)) \to \widetilde{H}^{\Aone}_{i} (\Th_{X_i}( \nu_i))$. 

 For each $m \in M_i$, choose an $\mathbb{F}_q$-point $p_m$  in the corresponding connected component of $X_i \cong \coprod_{m \in M_i} X_{im}$ and $X_{im} \cong \mathbb{A}^{n-i}$. The inclusion 
\[
\Th_{p_m} \nu_i\hookrightarrow \Th_{X_{im}} \nu_i 
\] is an $\Aone$-weak equivalence. 

Since $X$ has a strict cellular structure, the Krull dimension of $X$ is $n$. By \cite[Th\'eor\`eme II.4.10]{sga1}, we may choose a Zariski open subset $U \subset X$ containing $p_m$ and an \'etale map $\psi: U \to \mathbb{A}^n$ such that
\[
\xymatrix{X_{im} \cap U \ar[r]\ar[d] & \ar[d]_{\psi} \Omega_i(X)\cap U\\
 \mathbb{A}^{n-i} \ar[r] & \mathbb{A}^{n}}
\] is a pullback, and the bottom horizontal map is the natural map $\Spec k[y_{i+1}, \ldots, y_n] \to \Spec k[y_{1}, \ldots, y_n] $ given by $(y_{i+1}, \ldots, y_n) \mapsto (0 ,\ldots, 0, y_{i+1}, \ldots, y_n)$. We obtain maps of Thom spaces
\[
\Th_{p_m} \nu_i \to \Th_{X_{im} \cap U} \nu_i \to \Th_{\mathbb{A}^{n-i}} N_{\mathbb{A}^{n-i}} {\mathbb{A}^{n}} 
\] whose composition is an $\Aone$-weak equivalence.

Purity \cite[Theorem 2.23]{morelvoev} defines a canonical $\Aone$-weak equivalence 
\[
 \Th_{\mathbb{A}^{n-i}} N_{\mathbb{A}^{n-i}} \mathbb{A}^{n} \simeq \bA^n/ \bA^n-\bA^{n-i}.
 \]

The map $\Spec k[y_{1}, \ldots, y_i] \to \Spec k[y_{1}, \ldots, y_n] $ given by $(y_{1}, \ldots, y_i) \mapsto ( y_{1}, \ldots, y_i,0 ,\ldots, 0)$ determines an $\Aone$-weak equivalence 
\[
\bA^i/ \bA^i-\bA^{0} \simeq  \bA^n/ \bA^n-\bA^{n-i}
\]
and excision determines an $\Aone$-weak equivalence 
\[
\bA^i/ \bA^i-\bA^0 \simeq \PP^i/\PP^{i-1}.
\]

Composing in the homotopy category, we obtain a canonical (zig-zag) $\Aone$-weak equivalence 
\[
\Th_{X_i}( \nu_i) \simeq \PP^i/\PP^{i-1}
\]
where the relative Frobenius acts compatibly on both sides. The claim is thus equivalent to showing that the map
\[
 \widetilde{H}^{\Aone}_{i}(\varphi): \widetilde{H}^{\Aone}_{i} (\PP^i/\PP^{i-1}) \to \widetilde{H}^{\Aone}_{i} ( \PP^i/\PP^{i-1})
\] induced by $\varphi$ is multiplication by $q_\epsilon^i$. By Morel's Hurewicz theorem, this map $\widetilde{H}^{\Aone}_{i}(\varphi)$ is multiplication by $\deg^{\Aone} \varphi$. The relative Frobenius is the map $$\PP^i/\PP^{i-1} \to \PP^i/\PP^{i-1}$$ $$y_j\mapsto y_j^q$$ for $j=0,\ldots, i$, which is
homotopy equivalent to the $i$-fold smash product of $\PP^1 \to \PP^1$ mapping $y_j \mapsto y_j^q$. The degree of this map is $q_\epsilon^i$. (To see this, first note that the smash product multiplies degrees \cite{morel2004motivic-pi0} so it suffices to prove the claim for $i=1$. The computation for $i=1$ follows in a straightforward manner from computing the B\'ezoutian \cite[Definition 3.4]{Cazanave} which computes the $\Aone$-degree \cite[Theorem 3.6 and Theorem 1.2]{Cazanave}.) The degree and the claim follows.
\end{proof}

Using Proposition \ref{pr:deg_Ccell_Frob} we can give an explicit description of the enriched logarithmic zeta function of a smooth projective scheme over \(\mathbb{F}_q\) equipped with an \(\mathbb{F}_q\)-rational strict cellular structure.  Recall that in this case, for each $i$,
\[C_i^\cell(X) \simeq (\underline{K}^{MW}_i)^{b_i}\]
 for a nonnegative integer \(b_i \geq 0\).  We call the integer \(b_i\) the \defi{rank of \(C_i^\cell(X)\)}. 

\begin{co}\label{co:zeta_strict_cellular}
Let \(X\) be a smooth projective scheme of dimension \(n\) over \(\mathbb{F}_q\) equipped with a strict cellular structure defined over \(\mathbb{F}_q\).  Let \(b_i\) be the rank of \(C_i^\cell(X)\).  Then 
\[ \dlog \zeta^{\Aone}_{X}(t) = \sum_{i=0}^n - \langle -1 \rangle^i \frac{d}{dt} \log (1 - q^i_\epsilon t)^{b_i}. \]
\end{co}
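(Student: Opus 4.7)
The plan is to directly combine Theorem~\ref{thm:rationality_cellular} with the explicit description of the action of Frobenius on the cellular complex provided by Proposition~\ref{pr:deg_Ccell_Frob}. Since a strict cellular structure is in particular a simple cellular structure (as observed right after Definition~\ref{df:finite_cellular_structure}), Theorem~\ref{thm:rationality_cellular} applies to $X$ equipped with the relative Frobenius $\varphi$, giving
\[\dlog \zeta^{\Aone}_{X, \varphi} = \sum_{i=-\infty}^{\infty} -\langle -1\rangle^i \frac{d}{dt}\log P_{C^\cell_i(\varphi)}(t).\]

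Next, I would invoke Proposition~\ref{pr:deg_Ccell_Frob}, which identifies the endomorphism $C^\cell_i(\varphi)$ as multiplication by the scalar $q^i_\epsilon \in \GW(\mathbb{F}_q)$ on $C^\cell_i(X) \simeq (\underline{K}^{MW}_i)^{b_i}$. Since scalar multiplication on a free module of rank $b_i$ is represented by $q^i_\epsilon$ times the $b_i \times b_i$ identity matrix, the characteristic polynomial (in the sense of Definition~\ref{df:char_poly}, which applies since $\GW(\mathbb{F}_q)$ is commutative) is
\[P_{C^\cell_i(\varphi)}(t) = \det(1 - t \cdot q^i_\epsilon \cdot I_{b_i}) = (1-q^i_\epsilon t)^{b_i}.\]

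Finally, I would restrict the range of summation. Because the strict cellular structure on $X$ has length $n = \dim X$, we have $C^\cell_i(X) = 0$ (equivalently $b_i = 0$) for $i < 0$ and for $i > n$, so all terms outside $0 \leq i \leq n$ vanish. Assembling these ingredients yields the stated formula
\[\dlog \zeta^{\Aone}_{X}(t) = \sum_{i=0}^{n} -\langle -1\rangle^i \frac{d}{dt}\log(1 - q^i_\epsilon t)^{b_i}.\]

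There is no real obstacle here: the work has been done in Theorem~\ref{thm:rationality_cellular} and Proposition~\ref{pr:deg_Ccell_Frob}, and the present corollary is essentially a bookkeeping specialization of the general rationality theorem to the case where each $C^\cell_i(\varphi)$ acts as a single scalar. The only point meriting explicit mention is the vanishing $C^\cell_i(X) = 0$ outside $0 \leq i \leq n$, which one can read off from Remark~\ref{rmk:finite_cellular_schemes_Ccell_KMWbi} together with the fact that a strict cellular structure on a smooth scheme of dimension $n$ has exactly $n+1$ filtration steps.
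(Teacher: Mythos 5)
Your proof is correct and follows essentially the same route as the paper: apply Theorem~\ref{thm:rationality_cellular} to the Frobenius, then use Proposition~\ref{pr:deg_Ccell_Frob} to identify $C^\cell_i(\varphi)$ with multiplication by $q^i_\epsilon$, whence $P_{C^\cell_i(\varphi)}(t)=(1-q^i_\epsilon t)^{b_i}$. The only addition beyond the paper's short proof is the explicit remark that terms vanish outside $0\le i\le n$, which is harmless bookkeeping.
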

\begin{proof}
Let \(\varphi\) denote the relative Frobenius endomorphism.
By Theorem \ref{thm:rationality_cellular}, it suffices to show that \(P_{C^\cell_i(\varphi)}(t) = (1- q^i_\epsilon t)^{b_i}\).  This follows from the fact that \(C_i^\cell(\varphi)\) is a \(b_i \times b_i\) square matrix, which by Proposition \ref{pr:deg_Ccell_Frob} is multiplication by \(q^i_\epsilon\).
\end{proof}

\begin{rmk}
We think of Corollary \ref{co:zeta_strict_cellular} as morally saying
\[``\left.\zeta^\Aone_X(t) = \frac{1}{\prod_{i \text{ odd}} (1-q^i_\epsilon t)^{b_i \langle -1\rangle} \prod_{i \text{ even}}(1-q^i_\epsilon t)^{b_i}} \right.''.\]
As explained in Remark \ref{rmk:dlogrationality}, this does not make literal sense because there is not a \(\lambda\)-ring structure on \(\GW(\mathbb{F}_q)(t)\) compatible with the logarithmic derivative.
\end{rmk}

Corollary \ref{co:zeta_strict_cellular} yields many examples of enriched logarithmic zeta functions for varieties of natural geometric interest.

\begin{ex}[Enriched logarithmic zeta function of \(\mathbb{P}^n\)]\label{ex:projspacezeta}
Projective space \(\mathbb{P}^n\) has a strict cellular structure with a single copy of \(\mathbb{A}^i\) for each \( 0 \leq i \leq n\).
Applying Corollary \ref{co:zeta_strict_cellular} shows that the logarithmic zeta function of $\mathbb{P}^n$ is given by
 \[ \dlog \zeta^{\mathbb{A}^1}_{\mathbb{P}^n}(t) = \frac{d}{dt} \log \frac{1}{\displaystyle\prod_{0 \leq i \leq n \atop \text{i even}} (1- q_\epsilon^it)} + \langle -1 \rangle\frac{d}{dt} \log \frac{1}{\displaystyle\prod_{1 \leq i \leq n \atop \text{i odd}} (1- q_\epsilon^it)}. \]
\end{ex}

\begin{rmk}[Functional equation for projective spaces]
We record here that the $\mathbb{A}^1$-logarithmic zeta function of projective spaces of odd dimension satisfies a functional equation.

When $n$ is odd, we have
\[ \chi_c^{\mathbb{A}^1}(\mathbb{P}^n) = \frac{n+1}{2} \left( \langle 1 \rangle + \langle -1 \rangle \right) \]
(see \cite[Example~1.6]{Levine-EC}). By Example~\ref{ex:projspacezeta}, we have
\[ \frac{d}{dt} \log \zeta^{\mathbb{A}^1}_{\mathbb{P}^n}(t) = \frac{d}{dt} \log \frac{1}{\prod_{0 \leq i \leq n \atop \text{i even}} (1- \tilde{q}^it)} + \langle -1 \rangle\frac{d}{dt} \log \frac{1}{\prod_{1 \leq i \leq n \atop \text{i odd}} (1- \tilde{q}^it)}. \]
For any $i$ such that $0 \leq i \leq n$, we have
\[ \frac{1}{1- \tilde{q}^i \frac{1}{\tilde{q}^{n}t}} = \frac{\tilde{q}^nt}{\tilde{q}^i \left( \tilde{q}^{n-i}t-1 \right)} = \frac{-\tilde{q}^{n-i}t}{1-\tilde{q}^{n-i}t}.  \]
Combining the last three equations, we see that
\[ \dlog \zeta^{\mathbb{A}^1}_{\mathbb{P}^n}(t) = - \chi_c^{\mathbb{A}^1}(\mathbb{P}^n)t^{-1}+\langle -1 \rangle \dlog \zeta^{\mathbb{A}^1}_{\mathbb{P}^n}\left(\frac{1}{\tilde{q}^{n}t}\right). \]

The functional equation for the regular zeta function is a consequence of Poincar\'e duality for the $\ell$-adic \'{e}tale cohomology groups. Morel and Sawant have interesting conjectures on Poincare duality for their cohomology theories for general smooth projective varieties $X$. It would be interesting to know if an analogous functional equation of the logarithmic zeta function can be deduced from such a statement for general smooth projective varieties. 
\end{rmk}

\begin{ex}[Enriched logarithmic zeta function of \(\mathbb{G}(1,3)\)]
The Grassmannian \(\mathbb{G}(1,3)\) of lines in \(\mathbb{P}^3\) has a strict cellular structure composed of the Schubert cycles, as we now recall.  Fix a full flag \(V_0 \subset V_1 \subset V_2 \subset V_3  = \mathbb{P}^3\) of linear subspaces, where \(V_i \) has dimension \(i\). For \(0 \leq b \leq a \leq 2\), let \(\Sigma_{a,b}\) denote the locus of lines in \(\mathbb{G}(1,3)\) that meet \(V_{2-a}\) in a point and \(V_{3-b}\) in a line.  Let \(\Sigma_{a,b}^\circ\) denote the complement of all other Schubert cycles in \(\Sigma_{a,b}\).  Then, as in \cite[Section 3.3.1]{EisenbudHarris}, we have \(\Sigma_{a,b} \simeq \mathbb{A}^{4-a-b}\), and the filtration by closed subsets
\[\emptyset \subset \Sigma_{2,2} \subset \Sigma_{2, 1} \subset \left(\Sigma_{1,1} \cup \Sigma_{2,0}\right) \subset \Sigma_{1,0} \subset \Sigma_{0,0} = \mathbb{G}(1,3)\]
gives a strict cellular structure as in Definition \ref{df:strict_cellular_structure}\eqref{cond:F}.  Hence we have \(b_2 = 2\) and all other \(b_i=1\) for \(i=0,1, 3, 4\).  Thus
 \[ \dlog \zeta^{\mathbb{A}^1}_{\mathbb{G}(1,3)}(t) = \frac{d}{dt} \log \frac{1}{(1- t)(1-q_\epsilon^2t)^2(1-q_\epsilon^4t)} + \langle -1 \rangle\frac{d}{dt} \log \frac{1}{(1- q_\epsilon t)(1- q_\epsilon^3 t)}. \]
\end{ex}

\begin{ex}[Enriched logarithmic zeta function of $\mathbb{P}^1 \times \mathbb{P}^1$]\label{E:productofproj} 
 The product \(\mathbb{P}^1 \times \mathbb{P}^1\) has a strict cellular structure with a single copy of \(\mathbb{A}^0\), two copies of \( \mathbb{A}^1\) and one copy of \( \mathbb{A}^2 \).
Applying Corollary \ref{co:zeta_strict_cellular} shows that the logarithmic zeta function of $\mathbb{P}^1 \times \mathbb{P}^1$ is given by
 \[ \dlog \zeta^{\mathbb{A}^1}_{\mathbb{P}^1 \times \mathbb{P}^1}(t) = \frac{d}{dt} \log \frac{1}{(1- t)(1- q_\epsilon^2t)} + \langle -1 \rangle\frac{d}{dt} \log \frac{1}{(1- q_\epsilon t)^2}. \]
\end{ex}

\subsection{$\Aone$-logarithmic zeta functions via Hoyois's trace formula}\label{S:Hoyois-trace-formula}

In \cite[Theorem 1.3]{hoyois2015quadratic}, Hoyois provides a quadratic refinement of the Grothendieck--Lefschetz trace formula in the setting of stable motivic homotopy theory, which gives us a procedure to compute the coefficients of the enriched logarithmic zeta function $\dlog \zeta^{\Aone}_{X,\varphi}$ from those of $\dlog \zeta_X(t)$.

We introduce some notation first. For an endomorphism $\varphi:X\to X$ of a scheme $X$, we denote by $X^{\varphi}$ its scheme of fixed points. Given a finite separable field extension $L/K$, the classical trace map
$\Tr_{L/K}:L\to K$ induces a \textit{Transfer}
\begin{equation}\label{TrGWeq}
\Tr_{L/K}:\GW(L)\to \GW(K)
\end{equation}
by sending a bilinear form $b:V\times V \to L$ to the form $Tr_{L/K}\circ b:V\times V\to K$ of rank $[L:K] \rk (b)$  (see \cite[The cohomological transfer, Chapter 4]{A1-alg-top} \cite[Lemma 2.3]{calmes2014finite}).

Hoyois' main theorem then has the following consequence:

\begin{pr}\cite[Corollary 1.10]{hoyois2015quadratic} Let $k$ be a field, let $X$ be a smooth proper $k$-scheme, and let $\varphi:X\to X$ be a $k$-morphism with \'etale fixed points. Then
$$\Tr(\varphi) = \sum_{x\in X^{\varphi}} \Tr_{\kappa(x)/k}\langle \det(\id - d\varphi_x)\rangle.$$
\end{pr}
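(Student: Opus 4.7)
The plan is to deduce this directly from Hoyois's main theorem \cite[Theorem 1.3]{hoyois2015quadratic}, the quadratic refinement of the Grothendieck--Lefschetz--Verdier formula in $\SH(k)$. Hoyois expresses $\Tr(\varphi)\in \End(1_{\SH(k)})\cong \GW(k)$ as a sum of local trace contributions indexed by the connected components of the derived fixed locus, each of which is constructed from the self-intersection of the graph of $\varphi$ with the diagonal and is then transferred from the residue field back to $\GW(k)$.

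First I would unwind the hypothesis. The assumption that the fixed-point scheme $X^{\varphi}$ is étale over $k$ means that $X^\varphi$ is a finite disjoint union $\coprod_x \Spec \kappa(x)$ of closed points, and is moreover equivalent to the condition that $\id - d\varphi_x \colon T_xX \to T_xX$ is a linear isomorphism at each fixed point $x$ (since étaleness of the fixed locus is cut out by the nondegeneracy of this excess normal bundle). Consequently the derived and classical fixed schemes coincide, so Hoyois's formula reads as an honest sum over the points of $X^{\varphi}$ and the Atiyah--Bott--style local index at each $x$ is computed from the linear automorphism $\id-d\varphi_x$ of the tangent space $T_xX$.

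Next I would identify the local contribution. Along an étale fixed point, Hoyois's local trace simplifies to the $\Aone$-degree of the self-map of the Thom space $\Th(T_xX)$ induced by $\id-d\varphi_x$, viewed as an element of $\GW(\kappa(x))$. By Morel's computation of the $\Aone$-degree of a linear automorphism $L$ of affine space as $\langle \det L\rangle$ (cf.\ \cite[Lemma 3.14]{A1-alg-top} and the surrounding discussion), this local index equals $\langle \det(\id - d\varphi_x)\rangle \in \GW(\kappa(x))$. Applying the transfer $\Tr_{\kappa(x)/k}$ of \eqref{TrGWeq} to each local term and summing over $x \in X^{\varphi}$ yields the displayed formula.

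The main obstacle, which is exactly what Hoyois establishes rather than what needs to be done here, is the construction of the local trace as a motivic Euler class and its compatibility with transfers along finite étale extensions; once \cite[Theorem 1.3]{hoyois2015quadratic} is granted, the étale-fixed-point case is the linearized, most transparent specialization. Thus the proof amounts to invoking Hoyois's theorem, checking that the hypothesis "étale fixed points" makes the excess intersection bundle trivial and the local index linear, and identifying the resulting $\Aone$-degree via Morel.
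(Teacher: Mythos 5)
The paper gives no proof of this proposition at all — it is stated purely as a citation to Hoyois \cite[Corollary 1.10]{hoyois2015quadratic} — and your sketch is a faithful reconstruction of how Hoyois himself deduces that corollary from his Theorem 1.3: the \'etale hypothesis forces $\id - d\varphi_x$ to be invertible so the derived and classical fixed loci agree, the local term is the $\Aone$-degree $\langle\det(\id-d\varphi_x)\rangle$ by Morel, and the $\GW$-transfer $\Tr_{\kappa(x)/k}$ assembles the global trace. So your proposal is correct and consistent with the route the paper (implicitly, via the citation) relies on.
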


This result gives a computation of the $\Aone$-logarithmic zeta function of a smooth proper scheme over a finite field in terms of its point counts.

\begin{tm}\label{T:enrichedzetaviaHoyois} 
Let $X$ be a smooth proper scheme over $\F_q$ and let $\varphi: X\to X$ be the relative Frobenius morphism.  Let $u$ denote a non-square in $\F_{q}$. Then
the $\Aone$-logarithmic zeta function $\dlog \zeta^{\Aone}_{X, \varphi} $ is computed by the following formula:
\begin{align*}
\dlog \zeta^{\Aone}_{X, \varphi} = \sum_m  &\left( \left(\sum_{ \substack{i|m\\ i \text{ even}}} \left(\frac{1}{i}\sum_{d|i} \mu(d)|X(\F_{q^{i/d}})| \right) (i-1)\langle 1 \rangle + \langle u \rangle) \right)\right. \\
& \ \ \   \left. + \left( \sum_{\substack{i|m\\ i \text{ odd}}} \left(\frac{1}{i}\sum_{d|i} \mu(d)|X(\F_{q^{i/d}})| \right) i\langle 1 \rangle \right) \right) t^{m-1},
\end{align*} and $\frac{1}{i}\sum_{d|i} \mu(d)|X(\F_{q^{i/d}})| $ is an integer.
\end{tm}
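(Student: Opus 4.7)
The plan is to apply Hoyois's enriched Grothendieck--Lefschetz trace formula (the preceding proposition) to each iterate $\varphi^m$ of the relative Frobenius, to rewrite the resulting sum over fixed points in terms of point counts via M\"obius inversion, and to substitute the classical value of $\Tr_{\F_{q^i}/\F_q}\langle 1\rangle$ in $\GW(\F_q)$.

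First I would verify the hypotheses of Hoyois's formula for $\varphi^m$. The relative Frobenius raises local sections to the $q$-th power, so at every closed point $x$ it sends $\mathfrak{m}_x$ into $\mathfrak{m}_x^q\subseteq \mathfrak{m}_x^2$; consequently $d\varphi_x=0$ and therefore $d(\varphi^m)_x=0$. Thus $\id - d(\varphi^m)_x = \id$ is invertible at every fixed point, which guarantees that $X^{\varphi^m}$ is \'etale and that $\det(\id - d(\varphi^m)_x)=1$. Hoyois's formula therefore simplifies to
\[
\Tr(\varphi^m) = \sum_{x\in X^{\varphi^m}} \Tr_{\kappa(x)/\F_q}\langle 1\rangle.
\]

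Next, the closed points of $X$ fixed by $\varphi^m$ are precisely those whose residue degree $[\kappa(x):\F_q]$ divides $m$. Let $\alpha(i)$ denote the number of closed points of $X$ with residue field $\F_{q^i}$ exactly; grouping by residue degree gives
\[
\Tr(\varphi^m) = \sum_{i\mid m}\alpha(i)\,\Tr_{\F_{q^i}/\F_q}\langle 1\rangle.
\]
Combining the identity $|X(\F_{q^m})| = \sum_{i\mid m} i\,\alpha(i)$ with M\"obius inversion yields $\alpha(i) = \tfrac{1}{i}\sum_{d\mid i}\mu(d)|X(\F_{q^{i/d}})|$.

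To finish, I would substitute the formula \eqref{E:classicaltrace}. In $\GW(\F_q)\cong \Z\oplus\Z/2$, the class $\Tr_{\F_{q^i}/\F_q}\langle 1\rangle$ is determined by the rank and the discriminant of the symmetric bilinear form $(x,y)\mapsto\Tr_{\F_{q^i}/\F_q}(xy)$. The rank equals $i$, and a direct computation (using, e.g., a primitive element and noting that Galois conjugates pair up) shows that the discriminant is trivial when $i$ is odd and equals $u$ modulo squares when $i$ is even; hence $\Tr_{\F_{q^i}/\F_q}\langle 1\rangle = i\langle 1\rangle$ for $i$ odd and $(i-1)\langle 1\rangle+\langle u\rangle$ for $i$ even. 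Plugging these values into the expression for $\Tr(\varphi^m)$, multiplying by $t^{m-1}$, summing over $m\geq 1$, and splitting the inner sum by the parity of $i$ produces the stated identity. The only mildly nonformal input is the vanishing $d\varphi=0$, which is what licenses the application of Hoyois's formula and makes the Jacobian factor trivial; everything else is a bookkeeping exercise with the M\"obius formula and the classical trace-form computation.
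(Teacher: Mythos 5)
Your proposal is correct and follows essentially the same route as the paper: apply Hoyois's formula to $\varphi^m$ (noting $d\varphi=0$ so the Jacobian factor is trivial), decompose $X^{\varphi^m}$ by residue degree, recover $\alpha(i)$ by M\"obius inversion from $|X(\F_{q^m})|=\sum_{i\mid m} i\,\alpha(i)$, and substitute the classical computation of $\Tr_{\F_{q^i}/\F_q}\langle 1\rangle$. The only difference is cosmetic: you sketch the discriminant computation for the trace form directly, whereas the paper cites it (\cite[Lemma 58]{CubicSurface}).
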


\begin{proof}
For $X$ a smooth proper scheme over $\F_q$ and $\varphi:X\to X$ the Frobenius morphism, we apply Hoyois' formula to $\varphi^m$. 
We can write $X^{\varphi^m}$ as a disjoint union over all of the points of degree~$X$ of degree dividing $m$: 
\begin{equation}\label{E:frob-fixed-points-disjoint-union} X^{\varphi^m} = \bigsqcup_{i \mid m} \bigsqcup_{\substack{\text{degree } i \\ \text{points of }X}} \Spec \F_{q^i}.
\end{equation}
Denote by $\alpha(i)$ the number of points of degree $i$ on $X$. Since  $d\varphi^m=0$, we obtain that
\begin{equation}\label{E:formula-Nm-traces}N_m(X):= \Tr(\varphi^m)  = \sum_{i|m} \alpha(i)\Tr_{\F_{q^i}/\F_q}\langle 1 \rangle.\end{equation}
It is classical that
\begin{equation}\label{E:classicaltrace}\Tr_{\F_{q^i}/\F_q}\langle 1 \rangle = \left\{\begin{array}{cc} i\langle 1 \rangle & \text{if $i$ odd}\\
(i-1)\langle 1 \rangle + \langle u \rangle &  \text{if $i$ even}\end{array}\right.\end{equation}
(see e.g. Lemma 58 in \cite{CubicSurface}). It remains to show that 
\begin{equation}\label{E:alphai_from_point_counts}\alpha(i) = \frac{1}{i}\sum_{d|i} \mu(d)|X(\F_{q^{i/d}})|,\end{equation}
This follows from M\"obius inversion since for every $i\geq 1$, we have 
\begin{equation*}
|X(\F_{q^i})| = \sum_{d|i} \alpha(d)d. \qedhere
\end{equation*}
\end{proof}

\begin{rmk}\label{R:compease}
 The quantity $|X(\F_{q^{i/d}})|$ in Theorem~\ref{T:enrichedzetaviaHoyois} can be computed from the eigenvalues of the Frobenius morphism on \'{e}tale cohomology groups of $X$. More precisely, choose a prime $\ell$  coprime to $q$. For $i$ such that $0 \leq i \leq 2 \dim X$, let $\{ \lambda_{i,1},\ldots,\lambda_{i,b_i} \}$ denote the eigenvalues of the Frobenius morphism on $H^i_{\textup{\'{e}t}}(X_{\overline{\F_q}},\QQ_\ell)$. Then the Grothendieck--Lefschetz trace formula (\cite[Theorem~7.1.1(ii), Section~7.5.7]{Qpoints}) tells us that
 \[ |X(\F_{q^{i/d}})| = \sum_{j=0}^{2 \dim X} (-1)^j \left( \sum_{l=1}^{b_j} \lambda_{j,l}^{i/d} \right)  .\] 
\end{rmk}

\begin{rmk}\label{R:nontrivdisc}
 From (\ref{E:classicaltrace}) and (\ref{E:alphai_from_point_counts}), we see that only the case where $q$ is odd will be interesting, and that the only contributions to $\disc N_m(X)$ come from $i \mid m$ with $i$ even. In particular, $\disc N_m(X)$ is trivial for all odd $m$ and we have the expression $N_m(X) = |X(\F_{q^m})|\langle 1 \rangle$ for all odd $m$.

For even $m$, we have the formula
\begin{equation}\label{E:discN_m_even_m} \disc N_m(X) = \sum_{\substack{i|m\\ i\ \text{even}}} \alpha (i)\end{equation}
in $\Z/2\Z$, and therefore, using (\ref{E:alphai_from_point_counts}) again,  that
\begin{equation}\label{E:discN_m_from_point_counts} \disc N_m(X) = \sum_{\substack{i|m\\ i\ \text{even}}}\frac{1}{i}\sum_{d|i} \mu(d)|X(\F_{q^{i/d}})|.\end{equation}
\end{rmk}

\begin{ex}[Enriched logarithmic zeta function of $\Spec \mathbb{F}_{q^2}$]
 Since $\Spec \mathbb{F}_{q^2}$ has exactly one closed point of degree $2$, it follows from Theorem~\ref{T:enrichedzetaviaHoyois} that
 \[ N_m(\Spec \mathbb{F}_{q^2}) = \left\{\begin{array}{cc} 0 & \text{if $m$ odd}\\
\langle 1 \rangle + \langle u \rangle &  \text{if $m$ even}\end{array}\right.,\]
 and hence
 \[ \dlog \zeta^{\Aone}_{\Spec \mathbb{F}_{q^2}, \varphi} = \sum_{\text{$m$ even}} (\langle 1 \rangle +\langle u \rangle)t^{m-1} = \langle u \rangle \dlog \frac{1}{1-\langle u \rangle t} + \dlog \frac{1}{1+t}. \]
\end{ex}

\begin{ex}[Enriched logarithmic zeta function of a twisted product of the projective line] \label{E:twistedproduct}
 Let $X \colonequals \Res_{\mathbb{F}_{q^2}/\mathbb{F}_q} \mathbb{P}^1$. The difference between the cellular structure of $\mathbb{P}^1 \times \mathbb{P}^1$ as in Example~\ref{E:productofproj} and that of $X$ is that $\Sigma_1(X) \setminus \Sigma_0(X)$ is $\mathbb{A}^1_{q} \bigsqcup \mathbb{A}^1_{q}$ for $\mathbb{P}^1 \times \mathbb{P}^1$ whereas it is $\mathbb{A}^1 \times_{\mathbb{F}_q} \Spec \mathbb{F}_{q^2}$ for $X$. 
 
 We will prove that
 \[ N_m(X) = \left\{\begin{array}{cc} 1+q_{\epsilon}^{2m} & \text{if $m$ odd}\\
1+q_{\epsilon}^{2m}+(\langle 1 \rangle + \langle u \rangle)(-\langle -1 \rangle q_\epsilon^m) &  \text{if $m$ even.}\end{array}\right. \]
 Once we have this, a direct calculation will then show that 
 \[ \dlog \zeta^{\Aone}_{X, \varphi} = \frac{d}{dt} \log \frac{1}{(1- t)(1- q_\epsilon^2t)} + \langle -u \rangle \frac{d}{dt} \log \frac{1}{1-q_{\epsilon}\langle u \rangle t} + \langle -1 \rangle \frac{d}{dt} \log \frac{1}{1+q_{\epsilon}t}. \]
 
 Since $\mathbb{P}^1 = \Spec \mathbb{F}_q \bigsqcup \mathbb{A}^1,N_m(\mathbb{P}^1) = 1-\langle -1 \rangle q_\epsilon^m, N_m(\Spec \mathbb{F}_q) = 1$ (Example~\ref{ex:projspacezeta}) and $N_m$ is a motivic measure (Proposition~\ref{N_m_motivic_measure}), it follows that $N_m(\mathbb{A}^1) = -\langle -1 \rangle q_\epsilon^m$. Once again using the fact that $N_m$ is a motivic measure, we get that
\[ N_m(\mathbb{A}^1 \times \Spec \mathbb{F}_{q^2}) = N_m(\mathbb{A}^1) \times N_m(\Spec \mathbb{F}_{q^2}) = \left\{\begin{array}{cc} 0 & \text{if $m$ odd}\\
(\langle 1 \rangle + \langle u \rangle)(-\langle -1 \rangle q_\epsilon^m) &  \text{if $m$ even}\end{array}\right.\]
We also have $\Sigma_2(X) \setminus \Sigma_1(X) = \Res_{\mathbb{F}_{q^2}/\mathbb{F}_q} \mathbb{A}^1 \cong \mathbb{A}^2$, and $\Sigma_1(X) \setminus \Sigma_0(X) = \Spec \mathbb{F}_q$.  Since $N_m(\mathbb{A}^2) = N_m(\mathbb{A}^1)^2$, and $X = \Sigma^2(X)$, putting the last few lines together, and once again using the fact that $N_m$ is a motivic measure, we obtain the formula above for $N_m(X)$.
 
The discriminant of $N_m$ is non-trivial for $m$ even. We have  \begin{equation*} \disc N_m(X) = \left\{\begin{array}{cc}1 \in \F_q^*/(\F_q^*)^2 & \text{if $m$ odd}\\
\disc \langle u \rangle \neq 1 &  \text{if $m$ even.}\end{array}\right.  \end{equation*} To see this, note that for $n$ odd $n_{\epsilon} = \frac{n-1}{2}h + \langle 1 \rangle$. Moreover, $(n_{\epsilon})^2 = n^2_{\epsilon}$, whence $\disc n_{\epsilon}^2 =1$ for $n$ odd. Furthermore, $\langle u \rangle \langle -1 \rangle n_{\epsilon} =  \frac{n-1}{2}h + \langle -u \rangle$ has discriminant $\disc  \langle -u \rangle$ for $n$ congruent to $1$ mod $4$, and $\langle -1 \rangle n_{\epsilon} =  \frac{n-1}{2}h + \langle -1 \rangle$ has discriminant $\disc  \langle -1 \rangle$ for $n$ congruent to $1$ mod $4$. Combining with the fact that $\disc$ is a homomorphism from the additive group of $\GW(\F_q)$ to $ \F_q^*/(\F_q^*)^2$ gives the claimed computation of the discriminant of $N_m$.
\end{ex}

\begin{rmk}\label{R:twisteduntwisted}
 It is easy to write down equations for the variety in Example~\ref{E:twistedproduct} explicitly. A smooth quadric $Q$ in $\mathbb{P}^3$ over $\F_q$ is isomorphic to $\mathbb{P}^1 \times \mathbb{P}^1$ over $\F_q$ if the discriminant of the corresponding bilinear form is a square in $\F_q$ (equivalently when the two rulings are defined over $\F_q$), and is isomorphic to $\Res_{\F_{q^2}/\F_q} \mathbb{P}^1$ otherwise (equivalently when the two rulings are defined over $\F_{q^2}$ but not over $\F_q$). 
 
 For example, the quadric with defining equation $11 x_0^2+x_1^2+x_2^2+x_3^2 = 0$ has non-square discriminant over $\F_{3}$ and is isomorphic to $\Res_{\F_{9}/\F_3} \mathbb{P}^1$, whereas it has square discriminant over $\F_5$ and hence is isomorphic to $\mathbb{P}^1 \times \mathbb{P}^1$ over $\F_5$.\end{rmk}

\begin{rmk}\label{R:changeinrealtop}
 The calculation in Example~\ref{E:productofproj} and Example~\ref{E:twistedproduct} illustrate the connection with the topology of the real points of a lift of $\Res_{\F_{q^2}/\F_q} \mathbb{P}^1$ to characteristic $0$ as in Proposition~\ref{pr:sigdlogA1_R}. As we remarked in the introduction, when $q$ is congruent to $3$ modulo $4$, we have $u=-1$ and the extension $\F_q \subset \F_{q^2}$ is given by $\F_{q^2} = \F_q[\sqrt{-1}]$ and the $\mathbb{R}$-schemes $\PP^1 \times \PP^1$ and $\Res_{\C/\R} \PP^1$ are lifts to characteristic zero of the varieties $\PP^1 \times \PP^1$ and $\Res_{\F_{q^2}/\F_q} \PP^1$ over $\F_q$ respectively. These varieties have natural lifts to $\Z$-schemes, and the Frobenius endomorphism also lifts, so Theorem~\ref{pr:lift_to_R} applies with $d=1$.  
 
 The $\R$-scheme $\PP^1 \times \PP^1$ has two $1$-cells, along with a $0$-cell and a $2$-cell. The $\R$-scheme $\Res_{\C/\R} \PP^1$ has only the $0$-cell and $2$-cell. Using Proposition~\ref{pr:sigdlogA1_R} and the fact that the degree of $x \mapsto x^q$ on the one-point compactification of $\R$ is $1$ for $q$ odd, we compute
 \begin{align}\label{eqn:CompareResandproduct_sign}
\sgn \dlog \zeta^{\Aone}_{\PP^1 \times \PP^1, \varphi}  = \frac{d}{dt} \log &\frac{1}{(1- t)(1- t)} - \frac{d}{dt} \log \frac{1}{(1- t)^2},  \nonumber \\
 &\textrm{ and}\\
\sgn \dlog \zeta^{\Aone}_{\Res_{\F_{q^2}/\F_q} \PP^1, \varphi}  &= \ \frac{d}{dt} \log \frac{1}{(1- t)(1- t)}  \nonumber
\end{align}
 
Note the additional $- \frac{d}{dt} \log \frac{1}{(1- t)^2}$ in $\sgn \dlog \zeta^{\Aone}_{\PP^1 \times \PP^1, \varphi}$ when compared to the $\Aone$-logarithmic zeta function of the restriction of scalars. Note also that \eqref{eqn:CompareResandproduct_sign} is consistent with Example~\ref{E:productofproj} and Example~\ref{E:twistedproduct}: the signature is a ring homomorphism. Since $q$ is odd, we have $\sgn q_\epsilon = 1$ and $\sgn q_\epsilon^2 = 1$. Since $q$ is congruent to $3$ modulo $4$, we have $u=-1$ and $\sgn \lra{u} = -1$. Combining these with the calculations in Example~\ref{E:productofproj} and Example~\ref{E:twistedproduct} we reproduce the above calculation of the signature.
\begin{align*}
\sgn \dlog \zeta^{\Aone}_{\Res_{\F_{q^2}/\F_q} \PP^1, \varphi}  &= \frac{d}{dt} \log \frac{1}{(1- t)(1- t)} + \frac{d}{dt} \log \frac{1}{1+t} + (-1) \frac{d}{dt} \log \frac{1}{1+t} \\
&= \frac{d}{dt} \log \frac{1}{(1- t)(1- t)}
\end{align*} Note that the signature and rank determine the discriminant of these $\Aone$-logarithmic zeta functions for $q$ congruent to $3$ mod $4$ by Theorem~\ref{pr:lift_to_R}, but the failure of $\disc$ to be a ring homomorphism produces a non-zero discriminant term in $\dlog \zeta^{\Aone}_{\Res_{\mathbb{F}_{q^2}/\mathbb{F}_q} \mathbb{P}^1, \varphi}$ as a result of the zero signature.

\end{rmk}

\subsection{The logarithmic zeta function of non-cellular schemes}\label{sect:elliptic_curves}

Observe that Theorem~\ref{T:enrichedzetaviaHoyois} applies to any smooth projective scheme $X$, not necessarily cellular. One may hope to directly prove that the enriched logarithmic zeta function of any smooth projective scheme is $\dlog$ rational from the formula in Theorem~\ref{T:enrichedzetaviaHoyois}, without appealing to any good underlying cohomology theory.

We illustrate Theorem~\ref{T:enrichedzetaviaHoyois} in the first interesting example of a non-cellular scheme, namely the case of an elliptic curve $E$. We will use the connection with the eigenvalues of the Frobenius endomorphism on the $\ell$-adic \'{e}tale cohomology groups as in Remark~\ref{R:compease}. The cohomology groups $H^0_{\textup{\'{e}t}}(E_{\overline{\F_q}},\QQ_\ell)$ and $H^2_{\textup{\'{e}t}}(E_{\overline{\F_q}},\QQ_\ell)$ are $1$-dimensional, and the eigenvalues of the Frobenius endomorphism are $1$ and $q$ respectively, and that $H^1_{\textup{\'{e}t}}(E_{\overline{\F_q}},\QQ_\ell)$ is $2$-dimensional with the two Frobenius eigenvalues $\lambda,\overline{\lambda}$ that satisfy $\lambda+\overline{\lambda} = a$ for some integer $a$ and $\lambda \overline{\lambda} = q$ (see \cite[Section~7.2]{Qpoints}). These imply
\[ E(\F_q^{i/d}) = 1-(\lambda^{i/d}+(\overline{\lambda})^{i/d})+q^{i/d}, \]
and in particular, that
\[ E(\F_q) = 1-a+q, \quad \quad \textup{and,} \quad \quad E(\F_{q^2}) =  1-(a^2-2q)+q^2. \]
In particular, by Theorem~\ref{T:enrichedzetaviaHoyois} the coefficient of $t^1$ in $\dlog_E^{\mathbb{A}^1}(t)$ is
\[ \frac{a-a^2+q+q^2}{2} \langle u \rangle + \frac{2-a-a^2+3q+q^2}{2} \langle 1 \rangle. \]
This coefficient has nontrivial discriminant if and only if $\frac{a-a^2+q+q^2}{2}$ is odd, or equivalently, when $q \equiv 3 \mod 4$ and $a \equiv 2,3 \mod 4$, and similarly when $q \equiv 1 \mod 4$ and $a \equiv 0,1 \mod 4$ . 

Continuing this way, for the elliptic curve with Weierstrass equation $y^2=x^3+2x+3$ over $\F_7$, which has $a = 2$, we find
\begin{align*}
\dlog_E^{\mathbb{A}^1}(t) = 6 \langle 1 \rangle t^0 &+ (59 \langle 1 \rangle + 1 \langle u \rangle) t^1 +  378 \langle 1 \rangle t^2 \\&+ 2400 \langle 1 \rangle t^3 + 16566 \langle 1 \rangle t^4 
+ (117179 \langle 1 \rangle + 1 \langle u \rangle) t^5 + \cdots
\end{align*}

\section{Motivic measures}\label{sec:motivic_measures}
For $k$ a field, we denote by $K_0(\mathrm{Var}_{k})$ the \textit{modified Grothendieck ring of varieties} over $k$, defined to be the quotient of the free abelian group on classes of algebraic varieties over $k$ by the following relations:
\begin{equation}\label{E:cut_and_paste_relation}
X - Y - U
\end{equation}
for every variety $X$ over $k$ and every closed subscheme $Y$ of $X$ with open complement $U$, and
\begin{equation}\label{E:radicial_surjective_relation}
X-Y
\end{equation}
for all varieties $X,Y$ over $k$ such that there exists a radicial surjective morphism $f:X\to Y$. Recall that a morphism is said to be \defi{radicial surjective} if it is bijective and if it induces purely inseparable extensions of residue fields.

 For $X$ a quasi-projective variety over a field $k$, Kapranov's zeta function \cite{Kapranov} is defined to be the power series with coefficients in the (modified) Grothendieck ring of varieties $K_0(\mathrm{Var}_k)$ given by
$$Z_X^{\mathrm{Kap}}(t) = \sum_{n\geq 0}[\mathrm{Sym}^n(X)] t^n,$$
where $\mathrm{Sym}^n(X)$ is the $n$-th symmetric power of $X$. When $k=\F_q$, it specializes to $\zeta_X(t)$ via the counting measure $\#_{\F_q}$. It is therefore natural to ask whether one could also recover our enriched zeta function from $Z_X^{\mathrm{Kap}}$.  We do not see an immediate way of doing this.  The natural candidate would be to apply the $\Aone$-categorical trace $\Tr(\varphi)$ where $\varphi$ denotes the Frobenius 
\[
\Tr(\varphi): K_0(\mathrm{Var}_{\F_q})\to \GW(\F_q)
\] (See Proposition~\ref{N_m_motivic_measure} to see that this is a motivic measure.) However, this gives the classical zeta function, with all coefficients in $\Z \subset \GW(\F_q)$ by \cite[Example 1.6]{hoyois2015quadratic}.  On the other hand, this motivates the question of determining whether our enriched trace and enriched zeta functions define \textit{motivic measures} in some appropriate sense, and the aim of this section is to answer this question in full.

In this section, the endomorphism $\varphi$ will be the Frobenius. 

\subsection{The $\Aone$-trace as a motivic measure}
 Recall that for every $m\geq 1$, there is a motivic measure
$$\#_{\F_{q^{m}}}: K_0(\mathrm{Var}_{\F_q})\to \Z,$$
called the \defi{counting measure}, given by sending the class $[X]$ of a variety $X$ over $\F_q$ to its point count $|X(\F_{q^m})|$.  For every $m\geq 1$, define $N_m(X) = Tr(\varphi^m)\in GW(\F_q)$ to be the $\Aone$-trace of the Frobenius endomorphism on $X$. 

\begin{pr}\label{N_m_motivic_measure} The assignment $X\mapsto N_m(X)$ induces a motivic measure
$$N_m: K_0(\mathrm{Var}_{\F_q})\to \GW(\F_q)$$
enriching $\#_{\F_q^{m}}$, in the sense that we recover $\#_{\F_q^{m}}$ by taking ranks.
\end{pr}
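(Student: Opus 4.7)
The plan is to extend the identity \eqref{E:formula-Nm-traces} from smooth projective schemes to a definition on all varieties. For smooth projective $X$ over $\F_q$, Hoyois's trace formula applied to the Frobenius $\varphi^m$ (and the fact that $d\varphi = 0$) gives
\[ N_m(X) = \sum_{d \mid m} \alpha_d(X)\,\Tr_{\F_{q^d}/\F_q}\langle 1 \rangle \;\in\; \GW(\F_q), \]
where $\alpha_d(X)$ is the number of closed points of $X$ with residue field $\F_{q^d}$. The right-hand side makes sense for any finite-type $\F_q$-scheme, so I take it as the definition of $N_m(X)$ in general and then verify that the resulting assignment descends to a ring homomorphism on $K_0(\mathrm{Var}_{\F_q})$ whose composition with the rank recovers $\#_{\F_{q^m}}$.

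For the cut-and-paste relation \eqref{E:cut_and_paste_relation}, any closed point of $X$ lies either in the closed subscheme $Y$ or in its open complement $U$, and remains closed in whichever piece contains it; thus $\alpha_d(X) = \alpha_d(Y) + \alpha_d(U)$ and $N_m$ is additive. For the radicial-surjective relation \eqref{E:radicial_surjective_relation}, since residue fields of closed points of an $\F_q$-scheme are finite and therefore perfect, every purely inseparable extension between them is trivial; a radicial surjective $f \colon X \to Y$ thus induces a bijection of closed points preserving residue fields, whence $\alpha_d(X) = \alpha_d(Y)$ and $N_m(X) = N_m(Y)$. The unit normalization $N_m([\Spec \F_q]) = \Tr_{\F_q/\F_q}\langle 1 \rangle = \langle 1 \rangle = 1$ is immediate, and the rank compatibility follows from $\rk \Tr_{\F_{q^d}/\F_q}\langle 1 \rangle = d$, giving $\rk N_m(X) = \sum_{d \mid m} d\,\alpha_d(X) = |X(\F_{q^m})|$.

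The substantive step is multiplicativity: closed points of $X \times_{\F_q} Y$ above a pair $(x,y)$ with $\kappa(x) = \F_{q^{d_1}}$, $\kappa(y) = \F_{q^{d_2}}$ correspond to the closed points of
\[ \Spec\bigl(\F_{q^{d_1}} \otimes_{\F_q} \F_{q^{d_2}}\bigr) \;\simeq\; \coprod^{\gcd(d_1,d_2)} \Spec \F_{q^{\mathrm{lcm}(d_1,d_2)}}, \]
so that $\alpha_d(X \times Y) = \sum_{\mathrm{lcm}(d_1,d_2) = d} \gcd(d_1,d_2)\,\alpha_{d_1}(X)\alpha_{d_2}(Y)$. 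Expanding $N_m(X) \cdot N_m(Y)$ as a double sum over $d_1, d_2 \mid m$ and comparing with $N_m(X \times Y)$ reduces multiplicativity to the identity
\[ \Tr_{\F_{q^{d_1}}/\F_q}\langle 1 \rangle \cdot \Tr_{\F_{q^{d_2}}/\F_q}\langle 1 \rangle \;=\; \gcd(d_1, d_2)\,\Tr_{\F_{q^{\mathrm{lcm}(d_1, d_2)}}/\F_q}\langle 1 \rangle, \]
which I expect to be the main obstacle. This is an instance of the projection formula for Scharlau's transfer on the \'etale $\F_q$-algebra $\F_{q^{d_1}} \otimes_{\F_q} \F_{q^{d_2}}$, and it can also be checked directly from \eqref{E:classicaltrace} by a short case analysis on the parities of $d_1, d_2$ using $\gcd(d_1,d_2)\,\mathrm{lcm}(d_1,d_2) = d_1 d_2$ and the relation $2(\langle u\rangle - \langle 1\rangle) = 0$ in $\GW(\F_q)$.
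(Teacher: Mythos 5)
Your argument is correct, and the interesting part — multiplicativity — is handled by a genuinely different route from the paper's. The paper treats multiplicativity categorically: it notes that the Frobenius on $X \times Y$ is $\varphi_X \times \varphi_Y$ and invokes the general fact (Ponto--Shulman) that the categorical trace is multiplicative with respect to smash products, $\Tr(\varphi_X \times \varphi_Y) = \Tr(\varphi_X)\Tr(\varphi_Y)$. You instead extend $N_m$ to all finite-type $\F_q$-schemes by the explicit point-count formula $N_m(X) = \sum_{d\mid m}\alpha_d(X)\,\Tr_{\F_{q^d}/\F_q}\langle 1\rangle$ and prove multiplicativity directly, reducing it via the tensor-product decomposition $\F_{q^{d_1}}\otimes_{\F_q}\F_{q^{d_2}} \cong (\F_{q^{\mathrm{lcm}(d_1,d_2)}})^{\gcd(d_1,d_2)}$ to the $\GW(\F_q)$-identity $\Tr_{\F_{q^{d_1}}/\F_q}\langle 1\rangle\cdot\Tr_{\F_{q^{d_2}}/\F_q}\langle 1\rangle = \gcd(d_1,d_2)\,\Tr_{\F_{q^{\mathrm{lcm}(d_1,d_2)}}/\F_q}\langle 1\rangle$, which indeed checks out by the parity case analysis you outline (using $\langle u\rangle^2 = \langle 1\rangle$ and $2(\langle u\rangle-\langle 1\rangle)=0$). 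Your route has the advantage of being self-contained and of never needing $X$ or $Y$ to be dualizable — it verifies multiplicativity directly as an identity of the extended invariant on arbitrary varieties, rather than via a categorical trace that is a priori only available for smooth proper schemes; the paper's route is shorter and makes the motivic-homotopy origin of the statement transparent. For well-definedness the two approaches are essentially the same: both come down to the observation that $N_m$ is determined by the point counts $\alpha_d$, which visibly respect the scissor and radicial-surjective relations.
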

\begin{proof} We first show that $N_m$ is well defined. For this, note that through the group isomorphism $\GW(\F_q) \cong \Z\times \Z/2\Z$, we have $N_m(X) = (|X(\F_q)|, \disc(N_m(X)))$, with $\disc(N_m(X))$ given by formula (\ref{E:discN_m_from_point_counts}), and therefore it passes to the quotient with respect to both the cut-and-paste relations (\ref{E:cut_and_paste_relation}) and the relations (\ref{E:radicial_surjective_relation}).

To prove multiplicativity, note that the Frobenius on $X\times Y$ is given by the product $\varphi_X\times \varphi_Y$. The trace is multiplicative with respect to smash product \cite[Corollary 5.9]{PontoShulman}, giving the equality
$$N_m(X\times Y) = N_m(X)N_m(Y).$$

We may thus conclude that $N_m$ defines a motivic measure. 
\end{proof}
\subsection{The enriched zeta function as a motivic measure}
 Associating to a variety $X$ over~$\F_q$ its zeta function $\zeta_X(t)$ induces a motivic measure
$$\zeta: K_0(\mathrm{Var}_{\F_q}) \to \mathcal{R}_1$$
where $\mathcal{R}_1 = \{f\in \C(t),\ f(0) =1 \}\subset 1 + t\C[[t]]$ is equipped with the Witt ring structure (see \cite[Theorem 2.1]{Ramachandran}). The logarithmic derivative
$$\dlog: 1+ t\C[[t]] \to \C[[t]]$$
sends the Witt ring structure to the ring structure where addition is addition of power series, and multiplication is coefficient-wise multiplication. In particular, composing it with $\zeta$, we get a motivic measure
$$\dlog \zeta: K_0(\mathrm{Var}_{\F_q}) \to \C^{\N}.$$

\begin{pr} The assignment $X\mapsto \dlog \zeta^{\Aone}_{X,\varphi}(t)$ defines a motivic measure
$$\dlog \zeta^{\Aone}: K_0(\mathrm{Var}_{\F_q}) \to \GW(\F_q)[[t]]$$
lifting $\dlog \zeta$. 
\end{pr}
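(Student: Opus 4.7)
The plan is to identify $\dlog \zeta^{\Aone}_{X,\varphi}(t)$ with the generating series of the coefficient-wise motivic measures $N_m$ from Proposition~\ref{N_m_motivic_measure}, and then observe that the ring structure on $\GW(\F_q)[[t]]$ against which one wants a homomorphism is the coefficient-wise one obtained from $\dlog$, as in the discussion just before the statement.

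First, by Definition~\ref{df:dlogA1zeta} and the identification $\Tr(\varphi^m) = N_m(X)$, we have
\[ \dlog \zeta^{\Aone}_{X,\varphi}(t) = \sum_{m \geq 1} N_m(X)\, t^{m-1}. \]
Equip $\GW(\F_q)[[t]]$ with the ring structure in which addition is the usual addition of power series and multiplication is coefficient-wise multiplication. Then the map $X \mapsto \dlog \zeta^{\Aone}_{X,\varphi}(t)$ is simply the product over $m \geq 1$ (placed in the appropriate coefficient of $t$) of the maps $N_m$, and I would deduce that it is a motivic measure directly from Proposition~\ref{N_m_motivic_measure}: additivity and the radicial surjective relation hold coefficient by coefficient, and multiplicativity $\dlog \zeta^{\Aone}_{X \times Y,\varphi}(t) = \dlog \zeta^{\Aone}_{X,\varphi}(t) \cdot \dlog \zeta^{\Aone}_{Y,\varphi}(t)$ also holds coefficient by coefficient since $N_m(X \times Y) = N_m(X) N_m(Y)$.

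For the assertion that this lifts $\dlog \zeta$, I would apply the rank homomorphism $\rk \colon \GW(\F_q)[[t]] \to \Z[[t]]$ coefficient-wise. By \eqref{eq:rankAonezeta=zeta}, $\rk \dlog \zeta^{\Aone}_{X,\varphi}(t) = \frac{d}{dt}\log \zeta_X(t)$, which by Remark~\ref{R:nontrivdisc} (or the classical identity) equals $\sum_{m \geq 1} |X(\F_{q^m})|\, t^{m-1} = \dlog \zeta(X)$.

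No step is really an obstacle here, since the work has been done in Proposition~\ref{N_m_motivic_measure}; the only point worth flagging is that one must be careful to use the coefficient-wise ring structure on $\GW(\F_q)[[t]]$ (coming from $\dlog$ applied to the Witt ring structure of $\mathcal{R}_1$), and not the standard Cauchy product, in order for multiplicativity to match with the K\"unneth-type identity $N_m(X \times Y) = N_m(X) N_m(Y)$ given by $\Aone$-categorical traces on products.
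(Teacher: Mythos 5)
Your argument is correct and is precisely what the paper's terse proof (``Follows from Proposition~\ref{N_m_motivic_measure}'') leaves implicit: the coefficients are the $N_m$, so additivity, the radicial relation, and multiplicativity follow coefficient-by-coefficient once one equips $\GW(\F_q)[[t]]$ with the ring structure transported from the Witt ring via $\dlog$. Your flag about needing the coefficient-wise product rather than the Cauchy product, and the check via $\rk$ that this lifts $\dlog\zeta$, are exactly the right points to make explicit.
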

\begin{proof}
Follows from Proposition~\ref{N_m_motivic_measure}.
\end{proof}

\begin{rmk}\label{R:irr}
 The Kapranov zeta function $Z_X^{\mathrm{Kap}}(t)$ of any curve $X$ over $\F_q$ is a rational function. On the other hand, the Kapranov zeta function is usually not rational for varieties of higher dimensions \cite{LarsenLunts}. There is reason to believe that Morel and Sawant's conjectures \cite{MorelTalk} on $\Aone$-cellular homology for general smooth projective varieties would have consequences for rationality of the $\Aone$-logarithmic zeta function analogous to Theorem~\ref{thm:rationality_cellular_slick}.
 \end{rmk}

\bibliographystyle{amsalpha}
\providecommand{\bysame}{\leavevmode\hbox to3em{\hrulefill}\thinspace}
\providecommand{\MR}{\relax\ifhmode\unskip\space\fi MR }
% \MRhref is called by the amsart/book/proc definition of \MR.
\providecommand{\MRhref}[2]{%
  \href{http://www.ams.org/mathscinet-getitem?mr=#1}{#2}
}
\providecommand{\href}[2]{#2}

%\bibliography{bibliography}

\begin{thebibliography}{GZLMH06}

\bibitem[AF14]{Asok_Fasel_alg_vector_bundles}
Aravind Asok, and Jean Fasel,  \emph{Algebraic vector bundles on spheres},
 J. Topol. \textbf{7} (2014), no.~3,
  894--926. \MR{3252968}

\bibitem[AFH20]{Asok_Fasel_Hopkins_alg_vector_bundles}
Aravind Asok, Jean Fasel, and Michael~J. Hopkins, \emph{Algebraic vector
  bundles and $p$-local {$\mathbb{A}^1$}-homotopy theory},
  \href{https://arxiv.org/abs/2008.03363}{arXiv:2008.03363}, 2020.

\bibitem[AFW20]{AsokFaselWilliams_Suslin-Herewicz}
Aravind Asok, Jean Fasel, and Ben Williams, \emph{Motivic spheres and the image
  of the {S}uslin-{H}urewicz map}, Invent. Math. \textbf{219} (2020), no.~1,
  39--73. \MR{4050101}
  
 \bibitem[AWW17]{AsokWickelgrenWilliams_SEHP}
Aravind Asok, Kirsten Wickelgren, and Ben Williams, \emph{The simplicial suspension sequence in {$\Bbb{A}^1$}-homotopy}, Geom. Topol. \textbf{21} (2017), no.~4,
2093--2160. \MR{3654105}

\bibitem[Ayo07]{ayoub2007six}
Joseph Ayoub, \emph{Les six op{\'e}rations de {G}rothendieck et le formalisme
  des cycles {\'e}vanescents dans le monde motivique I and II}, Ast\'erisque, vol. 314 and 315,
  Soci{\'e}t{\'e} math{\'e}matique de France, 2007.

\bibitem[Bac18]{bachmann-real-etale}
Tom Bachmann, \emph{Motivic and real \'{e}tale stable homotopy theory}, Compos.
  Math. \textbf{154} (2018), no.~5, 883--917. \MR{3781990}
  
\bibitem[BH20]{BHeta_periodic}
Tom Bachmann and Michael J. Hopkins, \emph{$\eta$-periodic motivic stable homotopy theory over fields}, \href{https://arxiv.org/abs/2005.06778}{arXiv:2005.06778},
  2020.

\bibitem[BW20]{BW-A1Eulerclasses}
Tom Bachmann and Kirsten Wickelgren, \emph{{$\mathbb{A}^1$}-{E}uler classes:
  six functors formalisms, dualities, integrality and linear subspaces of
  complete intersections}, J. Inst. Math. Jussieu 22 (2023), no. 2, 681--746. \MR{4557905}

\bibitem[Bor09]{Borger-field_one_element}
James Borger, \emph{Lambda-rings and the field with one element},
  \href{https://arxiv.org/abs/0906.3146}{arXiv:0906.3146}, 2009.

\bibitem[BTLM97]{Buch_Thomsen-Frobenius_toric_variety}
Anders Buch, Jesper~F. Thomsen, Niels Lauritzen, and Vikram Mehta, \emph{The
  {F}robenius morphism on a toric variety}, Tohoku Math. J. (2) \textbf{49}
  (1997), no.~3, 355--366. \MR{1464183}


\bibitem[Caz12]{Cazanave}
Christophe Cazanave, \emph{Algebraic homotopy classes of rational functions},
  Ann. Sci. \'{E}c. Norm. Sup\'{e}r. (4) \textbf{45} (2012), no.~4, 511--534
  (2013). \MR{3059240}
  
 \bibitem[CDH+20]{CDHHLMNNS-3}
Baptiste Calm\`es, Emanuele Dotto, Yonatan Harpaz, Fabien Hebestreit, Markus
  Land, Kristian Moi, Denis Nardin, Thomas Nikolaus, and Wolfgang Steimle.
\newblock Hermitian K-theory for Stable $\infty$-Categories III.
\newblock ArXiv 2009.07225, 2020.

\bibitem[CF17]{calmes2014finite}
Baptiste Calm{\`e}s and Jean Fasel, \emph{Finite {C}how--{W}itt
  correspondences}, \href{https://arxiv.org/abs/1412.2989}{arXiv:1412.2989},
  2017.

\bibitem[DD{\O}22]{DDO-stable_homotopy_infty}
Adrien Dubouloz, Fr\'ed\'eric D\'eglise, and Paul~Arne {\O}stvaer,
  \emph{Punctured tubular neighborhoods and stable homotopy at infinity},
  \href{https://arxiv.org/abs/2206.01564}{arXiv:2206.01564}, 2022.

\bibitem[DI05]{Dugger-Isaksen-Motivic_cell_structures}
Daniel Dugger and Daniel~C. Isaksen, \emph{Motivic cell structures}, Algebr.
  Geom. Topol. \textbf{5} (2005), 615--652. \MR{2153114}

\bibitem[EH16]{EisenbudHarris}
David Eisenbud and Joe Harris, \emph{3264 and all that---a second course in
  algebraic geometry}, Cambridge University Press, Cambridge, 2016.
  \MR{3617981}
  
\bibitem[EVW16]{Ellenberg-Venkatsh-Westerland}
Jordan~S. Ellenberg, Akshay Venkatesh, and Craig Westerland, \emph{Homological
  stability for {H}urwitz spaces and the {C}ohen-{L}enstra conjecture over
  function fields}, Ann. of Math. (2) \textbf{183} (2016), no.~3, 729--786.
  \MR{3488737}  

\bibitem[Ful84]{fulton-intersection}
W.~Fulton, \emph{Intersection theory}, Ergebnisse der Mathematik und ihrer
  Grenzgebiete, Springer-Verlag, 1984.

\bibitem[GZLMH06]{Gusein_power_structure}
S.~M. Gusein-Zade, I.~Luengo, and A.~Melle-Hernández, \emph{{Power structure
  over the {G}rothendieck ring of varieties and generating series of {H}ilbert
  schemes of points}}, Michigan Mathematical Journal \textbf{54} (2006), no.~2,
  353--359.

\bibitem[HO16]{heller2016galois}
J~Heller and K~Ormsby, \emph{Galois equivariance and stable motivic homotopy
  theory}, Transactions of the American Mathematical Society \textbf{368}
  (2016), no.~11, 8047--8077.

\bibitem[Hor05]{hornbostel2005a1}
Jens Hornbostel, \emph{{$A^1$}-representability of {H}ermitian {K}-theory and
  {W}itt groups}, Topology \textbf{44} (2005), no.~3, 661--687.

\bibitem[Hoy15]{hoyois2015quadratic}
Marc Hoyois, \emph{A quadratic refinement of the
  {G}rothendieck--{L}efschetz--{V}erdier trace formula}, Algebraic \& Geometric
  Topology \textbf{14} (2015), no.~6, 3603--3658.

\bibitem[Hu05]{HuPicard}
Po~Hu, \emph{On the {P}icard group of the stable $\mathbb{A}^1$-homotopy
  category}, Topology \textbf{44} (2005), no.~3, 609--640.
  
 \bibitem[Hu23]{Hu-rationality_dlogZ}
Xiaowen~Hu, \emph{Rationality of $\dlog$ $\mathbb{A}^1$-zeta functions},
  \href{https://arxiv.org/abs/2311.06793}{arXiv:2311.06793}, 2023.
  
 \bibitem[Ja17]{Jacobson-RCoh_I}
Jeremy A.~Jacobson, \emph{Real cohomology and the powers of the fundamental ideal in the
              {W}itt ring}, Ann. K-Theory \textbf{2} (2017), no.~3, 357--385. \MR{3658988}

\bibitem[Kap00]{Kapranov}
M.~Kapranov, \emph{The elliptic curve in the {S}-duality theory and
  {E}isenstein series for {K}ac-{M}oody groups},
  \href{https://arxiv.org/math/0001005}{arXiv:0001005}, 2000.

\bibitem[KW21]{CubicSurface}
Jesse~Leo Kass and Kirsten Wickelgren, \emph{An arithmetic count of the lines
  on a smooth cubic surface}, Compos. Math. \textbf{157} (2021), no.~4,
  677--709. \MR{4247570}

\bibitem[Lev20]{Levine-EC}
Marc Levine, \emph{Aspects of enumerative geometry with quadratic forms}, Doc.
  Math. \textbf{25} (2020), 2179--2239. \MR{4198841}

\bibitem[LL03]{LarsenLunts}
Michael Larsen and Valery~A. Lunts, \emph{Motivic measures and stable
  birational geometry}, Mosc. Math. J. \textbf{3} (2003), no.~1, 85--95, 259.
  \MR{1996804}

\bibitem[LYZR19]{RiouAppendix}
Marc Levine, Yaping Yang, Gufang Zhao, and Jo\"{e}l Riou, \emph{Algebraic
  elliptic cohomology theory and flops {I}}, Math. Ann. \textbf{375} (2019),
  no.~3-4, 1823--1855. \MR{4023393}

\bibitem[Mac71]{MacLane}
Saunders MacLane, \emph{Categories for the working mathematician}, Graduate
  Texts in Mathematics, Vol. 5, Springer-Verlag, New York-Berlin, 1971.
  \MR{0354798}

\bibitem[Mar09]{MartyF09}
Florian. Marty, \emph{Des ouverts {Z}ariski et des morphismes lisses en
  g\'eom\'etrie relative}, {\em PhD Thesis}, available at
  \url{http://thesesups.ups-tlse.fr/540/1/Marty_Florian.pdf}, 2009.

\bibitem[Mil13]{MilneLEC}
J.S. Milne, \emph{Lectures on {E}tale {C}ohomology}, 2013, available at
  \url{https://www.jmilne.org/math/CourseNotes/lec.html}.

\bibitem[Mor04]{morel2004motivic-pi0}
Fabien Morel, \emph{On the motivic {$\pi_0$} of the sphere spectrum},
  Axiomatic, enriched and motivic homotopy theory, NATO Sci. Ser. II Math.
  Phys. Chem., vol. 131, Kluwer Acad. Publ., Dordrecht, 2004, pp.~219--260.
  \MR{2061856}

\bibitem[Mor12]{A1-alg-top}
\bysame, \emph{{$\Bbb A^1$}-algebraic topology over a field}, Lecture Notes in
  Mathematics, vol. 2052, Springer, Heidelberg, 2012. \MR{2934577}

\bibitem[Mor22]{MorelTalk}
\bysame, \emph{On the cellular $\mathbb{A}^1$-homology of smooth varieties:
  some computations and open problems}, Talk on joint work with Anand Sawant at
  Motivic Geometry Conference, University of Oslo, August 8, 2022.

\bibitem[MS20]{MorelSawant}
Fabien Morel and Anand Sawant, \emph{Cellular $\mathbb{A}^1$-homology and the
  motivic version of {M}atsumoto's theorem},
 Adv. Math. \textbf{434} (2023), 110 pp.

\bibitem[MV99]{morelvoev}
Fabien Morel and Vladimir Voevodsky, \emph{{${\bf A}^1$}-homotopy theory of
  schemes}, Inst. Hautes \'{E}tudes Sci. Publ. Math. (1999), no.~90, 45--143
  (2001). \MR{1813224}

\bibitem[Poo17]{Qpoints}
Bjorn Poonen, \emph{Rational points on varieties}, Graduate Studies in
  Mathematics, vol. 186, American Mathematical Society, Providence, RI, 2017.
  \MR{3729254}

\bibitem[PS14]{PontoShulman}
Kate Ponto and Michael Shulman, \emph{Traces in symmetric monoidal categories},
  Expo. Math. \textbf{32} (2014), no.~3, 248--273. \MR{3253568}

\bibitem[PW18]{panin2010motivic}
I.~Panin and C.~Walter, \emph{On the motivic commutative ring spectrum {${\bf
  BO}$}}, Algebra i Analiz \textbf{30} (2018), no.~6, 43--96. \MR{3882540}

\bibitem[Ram15]{Ramachandran}
Niranjan Ramachandran, \emph{Zeta functions, {G}rothendieck groups, and the
  {W}itt ring}, Bull. Sci. Math. \textbf{139} (2015), no.~6, 599--627.
  \MR{3395874}

\bibitem[Rio05]{riou2005dualite}
Jo{\"e}l Riou, \emph{Dualit{\'e} de {S}panier--{W}hitehead en g{\'e}om{\'e}trie
  alg{\'e}brique}, Comptes Rendus Mathematique \textbf{340} (2005), no.~6,
  431--436.

\bibitem[Roz06]{RozenblyumST}
Nikita Rozenblyum, \emph{Motivic homotopy theory and power operations in
  motivic cohomology}, Harvard Senior Thesis, 2006.

\bibitem[sga03]{sga1}
\emph{Rev\^etements \'etales et groupe fondamental ({SGA} 1)}, Documents
  Math\'ematiques (Paris) [Mathematical Documents (Paris)], 3, Soci\'et\'e
  Math\'ematique de France, Paris, 2003, S{\'e}minaire de g{\'e}om{\'e}trie
  alg{\'e}brique du Bois Marie 1960--61. [Algebraic Geometry Seminar of Bois
  Marie 1960-61], Directed by A. Grothendieck, With two papers by M. Raynaud,
  Updated and annotated reprint of the 1971 original [Lecture Notes in Math.,
  224, Springer, Berlin; MR0354651 (50 \#7129)]. \MR{MR2017446 (2004g:14017)}

\bibitem[Tot14]{Totaro-linear-varieties}
Burt Totaro, \emph{Chow groups, {C}how cohomology, and linear varieties}, Forum
  Math. Sigma \textbf{2} (2014), Paper No. e17, 25. \MR{3264256}

\bibitem[Voe98]{Voevodsky-ICMA1}
Vladimir Voevodsky, \emph{{$\mathbb{A}^1$}-homotopy theory}, Proceedings of the
  {I}nternational {C}ongress of {M}athematicians, {V}ol. {I} ({B}erlin, 1998),
  no. Extra Vol. I, 1998, pp.~579--604. \MR{1648048}

\end{thebibliography}

\end{document}